\documentclass[11pt]{amsart}

\usepackage{amssymb}	
\usepackage{tikz}
\usepackage{tikz-cd}
\usepackage{pgfplots}

\usepackage{amsmath}
\usepackage{mathrsfs}
\usepackage{amsthm}
\usepackage{verbatim}
\usepackage{subeqnarray}
\usepackage{graphicx}
\usepackage{cancel}
\usepackage{color}
\usepackage{float}
\usepackage{bm}
\usepackage{hyperref}
\usepackage{amsfonts}
\usepackage{amscd}
\usepackage{cases}
\usepackage{xcolor}
\usepackage{eucal}
\usepackage[nice]{nicefrac}

\usepackage{ifthen}

\pgfplotsset{compat=1.18}
\usetikzlibrary{calc}
\usetikzlibrary{spy}
\def\centerarc[#1](#2)(#3:#4:#5)
{ \draw[#1] ($(#2)+({#5*cos(#3)},{#5*sin(#3)})$) arc (#3:#4:#5); }

\definecolor{red}{rgb}{0.8,0,0}
\definecolor{darkorange}{rgb}{1,0.4,0}
\definecolor{lightorange}{rgb}{1,0.6, 0}
\definecolor{yellow}{rgb}{1,0.8, 0}

\setcounter{footnote}{6}

\newtheorem{theorem}{Theorem}
\newtheorem{remark}{Remark}

\newtheorem{definition}{Definition}
\newtheorem{corollary}{Corollary}
\newtheorem{lemma}{Lemma}
\newtheorem{problem}{Problem}

\usepackage{amsopn}

\DeclareMathOperator*{\argmin}{arg\,min}
\newcommand{\bs}{{\scriptscriptstyle \bullet}}

\newcommand{\0}{\mathaccent23}
\newcommand\tr{\operatorname{tr}}
\newcommand\inc{\operatorname{inc}}
\newcommand\skw{\operatorname{skw}}
\newcommand\vskw{\operatorname{vskw}}
\newcommand\mskw{\operatorname{mskw}}

\newcommand\sym{\operatorname{sym}}
\newcommand\grad{\nabla}

\renewcommand\div{\operatorname{div}}

\newcommand\curl{\operatorname{curl}}
\newcommand\dev{\mathrm{dev}}
\newcommand\ran{\mathcal{R}}
\newcommand\M{\mathbb{R}^{3\times 3}}
\renewcommand\S{\mathbb{R}^{3\times 3}_{\sym}}
\newcommand\R{\mathbb{R}}

\newcommand{\couplestress}{m}
\newcommand{\Lag}{\mathrm{Lag}}
\newcommand{\vLag}{\mathrm{\textbf{Lag}}}
\newcommand{\HHJ}{\mathrm{HHJ}}
\newcommand{\MCS}{\mathrm{MCS}}
\newcommand{\BDM}{\mathrm{BDM}}
\newcommand{\RT}{\mathrm{RT}}
\newcommand{\NedII}{\mathrm{Ned}_{II}}
\newcommand{\IntRT}[1][]{\mathcal{I}^{\RT\ifthenelse{\equal{#1}{}}{}{^{#1}}}_h}

\newcommand{\half}{\nicefrac{1}{2}}

\usepackage{chngpage}

\usepackage{lipsum}

\usepackage{booktabs}

\makeatletter
\@addtoreset{equation}{section}
\makeatother

\setlength{\topmargin}{-0.5in}
\setlength{\textheight}{9.5in}
\setlength{\textwidth}{5.85in}
\setlength{\oddsidemargin}{0.325in}
\setlength{\evensidemargin}{0.325in}
\setlength{\marginparwidth}{1.0in}

\usepackage{geometry}
\geometry{left=2.5cm,right=2.5cm,top=3cm,bottom=3cm}

\title[Intrinsic mixed FEM for Cosserat elasticity]{Intrinsic mixed finite element methods for linear Cosserat elasticity}
\author[A.~Dziubek]{Andrea~Dziubek}
\address{SUNY Polytechnic Institute, Utica NY 13502, USA}
\email{dziubea@sunypoly.edu}
\author[K.~Hu]{Kaibo~Hu}
\address{The University of Edinburgh, James Clerk Maxwell Building, Peter Guthrie Tait Rd, Edinburgh EH9 3FD, UK}
\email{kaibo.hu@ed.ac.uk}
\author[M.~Karow]{Michael~Karow}
\address{Technische Universit\"at Berlin, Berlin, DE}
\email{karow@math.tu-berlin.de}
\author[M.~Neunteufel]{Michael~Neunteufel}
\address{Portland State University, PO Box 751, Portland OR 97201, USA}
\email{mneunteu@pdx.edu}
\begin{document}

\date{\today}

\begin{abstract}
    We propose two parameter-robust mixed finite element methods for linear Cosserat elasticity. The Cosserat coupling constant $\mu_c$, connecting the displacement $u$ and rotation vector $\omega$, leads to possible locking phenomena in finite element methods. The formal limit of $\mu_c\to\infty$ enforces the constraint $\frac{1}{2}\operatorname{curl} u = \omega$ and leads to the fourth-order couple stress problem. Viewing the linear Cosserat model as the Hodge-Laplacian problem of a twisted de~Rham complex, we derive structure-preserving distributional finite element spaces, where the limit constraint is fulfilled in the discrete setting. Applying the mass conserving mixed stress (MCS) method for the rotations, the resulting scheme is robust in $\mu_c$. Combining it with the tangential-displacement normal-normal-stress (TDNNS) method for the displacement part, we obtain additional robustness in the nearly incompressible regime and for anisotropic structures. Using a post-processing scheme for the rotations, we prove optimal convergence rates independent of the Cosserat coupling constant $\mu_c$.  
  We demonstrate the performance of the proposed methods in several numerical benchmark examples.\\
	\vspace*{0.25cm}
	\\
 \noindent
 {\bf{Keywords:}} Cosserat elasticity; couple stress problem; mixed finite element method; twisted complex; locking\\
	
\noindent
\textbf{{MSC2020:}}   65N30, 74S05
\end{abstract}

\maketitle

\section{Introduction}
\label{sec:introduction}
In 1909, the Cosserat brothers \cite{Cosserat1909} introduced the idea of modeling a material as a collection of points with associated directions that can rotate and stretch independently of the displacement of material points. G{\"u}nther \cite{Gunther1958de} derived Cosserat equations for linear deformations. A comprehensive review of recent advances, including literature on the history of generalized continua, plates, and shells, can be found in \cite{AMM2010}. The Cosserat models have been proven to be effective for various materials with complex microstructures, including micropolar fluids and electromagnetic solids (cf. \cite{CISMcosserat2010}) and man-made Cosserat materials \cite{RuegerLakes2018}. Identifying the additional material parameters for the microstructure remains challenging  (despite Lakes' pioneering size-effect experiments, cf. \cite{Lakes2023}). However, the number of the material parameters could be reduced through mathematical analysis \cite{NJF2010,NJ2009}.

There have been extensive studies on the classical elasticity models with finite element methods. An important numerical issue is locking - the convergence may deteriorate, or the solution tends to be zero as some physical parameters approach some limit values (an example is when the Poisson ratio $\nu$ goes to $1/2$, or equivalently the Lam\'e constant $\lambda\to\infty$). Various numerical schemes and finite elements have been proposed to avoid locking. In contrast, numerical schemes for Cosserat models have received less attention, with exceptions such as \cite{riahi2009full,sander2010geodesic,providas2002finite,grbvcic2018variational,XHZH2019}.  Several finite elements have been proposed for two- and three-dimensional linear Cosserat elasticity and the coupled stress problem, which can be obtained as the limit of the Cosserat model for the Cosserat coupling constant $\mu_c\to\infty$. A hybrid-stress element to avoid locking problems has been presented in \cite{XHZH2019}. 
Cosserat models involve more physical parameters, including a characteristic length scale and three micropolar moduli. Nevertheless, to the best of our knowledge, a method that is robust with all the parameters for Cosserat and micropolar models and a rigorous numerical analysis is still open. We mention that a recent work \cite{boon2024mixedfiniteelementmethods} constructed a mixed finite element scheme with a rigorous analysis. The method achieves robustness with the limit to classical elasticity when the characteristic length goes to zero. 
However, the limit to the couple stress problem ($\mu_{c}\rightarrow \infty$) was not addressed. 

In this paper, we construct schemes for the linear Cosserat problem inspired by Finite Element Exterior Calculus (FEEC) \cite{arnold2018finite,Arnold.D;Falk.R;Winther.R.2006a,Arnold.D;Falk.R;Winther.R.2010a}. A key idea of FEEC is to discretize the underlying differential complexes of PDEs.   The linear Cosserat model can be viewed as the Hodge-Laplacian problem of the elasticity version of the twisted de~Rham complex at index zero \cite{arnold2021complexes,vcap2022bgg}. These complexes are intermediate steps for deriving the BGG complexes (e.g., the elasticity complex) and incorporating more information. In the elasticity case, the twisted complex encodes displacement and pointwise rotation, while the elasticity complex (encoding linear elasticity) is obtained by eliminating the variables corresponding to rotation.

Fitting conforming finite elements in the twisted complexes would require higher order smoothness (cf. diagram \eqref{elasticity-diagram}). 
Instead of pursuing smooth finite elements, we follow the trend of {\it distributional finite elements}. The idea is to impose weaker regularity and allow Dirac deltas as shape functions. The distributional finite element de~Rham complex was initially investigated by Braess and Sch\"oberl \cite{braess2008equilibrated} for equilibrated residual error estimators. Dirac deltas can be paired with fields of proper continuity. This led to the tangential-displacement-normal-normal-stress (TDNNS) method for linear elasticity \cite{pechstein2011tangential}. Then the idea has also been applied to Reissner--Mindlin plates \cite{PS17}, the mass conserving mixed stress (MCS) method for Stokes equations \cite{gopalakrishnan2020mass0}, and the Hellan--Herrmann--Johnson method for the Kirchhoff--Love plate equation \cite{Hel67,Her67,Joh73} and shells \cite{NS2019,NS2024}. An attractive feature of distributional elements is that they usually have invariant forms under maps (pullbacks), which is important for finite element computation and generalizations to manifolds. The (affine) invariance reflects certain coordinate independence. For example, in Regge calculus and its finite element interpretation \cite{christiansen2011linearization,regge1961general}, metrics are discretized by the edge lengths, which are coordinate-independent. Therefore, these finite elements are intrinsic. We refer to \cite{hu2023distributional,christiansen2023extended} for constructing distributional finite elements for BGG diagrams and a discrete exterior calculus interpretation.  

In this paper, we propose two robust mixed finite element methods based on twisted complexes, called the MCS method (Problem \ref{prob:mcs_method} below) and the TDNNS-MCS method (Problem \ref{prob:mcs_tdnns_method} below), respectively. Using the MCS discretization method for the linearized rotations with 
corresponding couple stress tensor circumvents locking with respect to the Cosserat coupling constant $\mu_c$. When using the TDNNS method for the elasticity part on top of the MCS method, additional robustness for anisotropic structures and nearly incompressible materials is obtained. 
A rigorous error analysis is performed to prove convergence rates independent of the Cosserat coupling constant $\mu_c$ for the proposed mixed formulations. A post-processing procedure for the rotation field is presented. Several numerical benchmark examples demonstrate the proposed methods' efficiency and robustness.

The rest of the paper is organized as follows. In Section~\ref{sec:cosserat_couplestress}, we present the equations of linear Cosserat elasticity
and their relation with twisted complexes. In Section~\ref{sec:function_spaces}, we derive the two finite element methods. In Section~\ref{sec:stability_error} we perform a rigorous error analysis of the proposed methods with respect to the Cosserat coupling constant $\mu_c$. Further, post-processing schemes for the rotation field are discussed. We present several numerical benchmark examples in Section~\ref{sec:numerics} to show the performance and robustness of our methods.

\section{Linear Cosserat elasticity}
\label{sec:cosserat_couplestress}
Let $\Omega\subset \R^3$ be a domain. Define the following algebraic and differential operators for (differentiable) vectors $u$ and matrices $A$
\begin{flalign*}
	&\grad u = \begin{pmatrix}
		\frac{\partial u_x}{\partial x} & \frac{\partial u_x}{\partial y} & \frac{\partial u_x}{\partial z}\\
	 	\frac{\partial u_y}{\partial x} & \frac{\partial u_y}{\partial y} & \frac{\partial u_y}{\partial z}\\
	 	\frac{\partial u_z}{\partial x} & \frac{\partial u_z}{\partial y} & \frac{\partial u_z}{\partial z}
	\end{pmatrix}  ,\qquad \div u = \nabla\cdot u,\qquad \curl u = \nabla\times u,\\
	 & \mskw u = \begin{pmatrix}
		0 & -u_z & u_y\\
		u_z & 0 & -u_x\\
		-u_y & u_x & 0
	\end{pmatrix},\qquad \vskw A = \frac{1}{2}\begin{pmatrix}
		A_{zy}-A_{yz}\\
		A_{xz}-A_{zx}\\
		A_{yx}-A_{xy}
	\end{pmatrix},
\end{flalign*}
and denote by $\tr$, $\dev$, $\sym$, and $\skw$ the trace, deviatoric, symmetric, and skew part of a matrix, respectively. The identity matrix is denoted by $I$. 

\subsection{Cosserat energy and strong form}
\label{subsec:cosserat-energy}

We consider the following material laws
\begin{subequations}
\label{eq:material_law}
\begin{flalign}
\label{eq:material_law_elasticity}
	C_1(\varepsilon)&= 2\mu\sym\varepsilon+\lambda\tr\varepsilon\,I+\mu_c\skw\varepsilon=\mathcal{C}(\varepsilon)+\mu_c\skw\varepsilon,\\
		C_2(\varepsilon)&= (\gamma+\beta)\sym\varepsilon + \alpha \tr\varepsilon\,I+(\gamma-\beta)\skw\varepsilon\label{eq:material_law_curvature},
\end{flalign}
\end{subequations}
where $\mathcal{C}$ is the classical elasticity tensor with \emph{Lam\'e parameters} $\mu>0$ and $\lambda\geq 0$, $\mu_c\geq 0$ is the \emph{Cosserat coupling} constant, and $\alpha,\beta,\gamma\in\R$ are additional so-called \emph{micropolar moduli}. In the linear Cosserat setting, we consider the displacement field $u:\Omega\to\R^3$ and the rotation matrix identified with its axial rotation vector $\omega:\Omega\to\R^3$. 
The \emph{Cosserat energy functional} (with body forces $f_u$ and $f_{\omega}$) is defined by, see e.g. \cite{NJ2009},
\begin{flalign}
		\mathcal{E}^{\mathrm{Cosserat}}(u,\omega)&:=\int_{\Omega}\Big(\frac{1}{2}\|\grad u -\mskw \omega\|_{C_{1}}^{2}+\frac{1}{2}\|\grad\omega\|_{C_{2}}^{2}-\langle f_u,u\rangle - \langle f_\omega,\omega\rangle\Big)\,dx\nonumber\\
		&=\int_{\Omega}\Big(\frac{1}{2}\|\sym\grad u\|^{2}_{\mathcal{C}}+\mu_{c}\left\|\half\curl u-\omega \right\|^{2}+ \frac{\gamma+\beta}{2}\|\sym \grad\omega\|^{2}\label{eq:cosserat-energy}\\
		&\qquad\quad +\frac{\gamma-\beta}{4}\|\curl \omega\|^{2}+\frac{\alpha}{2}\|\div \omega\|^{2}\Big)\,dx-\int_{\Omega}(\langle f_u,u\rangle + \langle f_\omega,\omega\rangle)\,dx.\nonumber
\end{flalign}
The, in general non-symmetric, strain field $\varepsilon=\grad u - \mskw\omega$ is called the \emph{first Cosserat stretch} tensor and $\kappa=\grad \omega$ the \emph{micropolar curvature} tensor. Note that \eqref{eq:cosserat-energy} yields a minimization problem.

\begin{remark}[Linear elasticity limit]
	\label{rem:linear_elasticity_limit}
	When setting $\mu_c=0$ in \eqref{eq:cosserat-energy}, the Cosserat problem decouples into the standard linear elasticity problem in $u$ and an independent vector-Poisson problem in $\omega$. Considering the limit $C_2\to 0$ is also possible with $\mu_c>0$. Then, $\omega=\half\curl u$ absorbs the skew-symmetric part of $\grad u$, yielding linear elasticity. When discretized in a Hellinger--Reissner formulation that includes a nonsymmetric stress, $\omega$ can be interpreted as a Lagrange multiplier enforcing the weak symmetry of the stress, see \cite{arnold2007mixed,boon2024mixedfiniteelementmethods}. For ease of presentation, we assume that $C_2$ is positive definite and emphasize that it can be relaxed to be positive semi-definite. For a discussion of common choices of $\alpha$, $\beta$, and $\gamma$ we refer to \cite{JN2010}.
\end{remark}

Let $\Omega\subset \R^3$ be a bounded Lipschitz domain. We split the boundary $\Gamma=\partial \Omega$ into a Dirichlet $\Gamma_D$ and Neumann $\Gamma_N$ part such that $\Gamma_D\cup\Gamma_N=\Gamma$ and $\Gamma_D\cap\Gamma_N=\emptyset$, and we denote the outer unit normal by $n$. Assume that Dirichlet data $u_D$ and $\omega_D$ are prescribed on $\Gamma_D$ and that surface traction forces 
are given on $\Gamma_N$ by $g_u$ and $g_{\omega}$. For simplicity, we assume throughout the paper that the Dirichlet parts for $u$ and $\omega$ coincide and emphasize that the extension to distinguished boundary parts for the displacement and rotations is straightforward. By taking variations of \eqref{eq:cosserat-energy} with respect to $u$ and $\omega$ and integration by parts, we readily obtain the strong form with corresponding boundary conditions of Cosserat elasticity. 
By defining the \emph{elasticity stress tensor} $\sigma:=\mathcal{C}(\sym\grad u)$ and the \emph{couple stress tensor} $\couplestress:=C_2(\grad \omega)$ we have
\begin{subequations}
	\label{eq:strong_cosserat}
	\begin{flalign}
		-\div(C_1(\grad u - \mskw\omega))=-\div(\sigma)+ \mu_c\curl\left(\half\curl u-\omega\right)= f_u\quad& \text{ in }\Omega,\label{eq:strong_form_lin_mom}\\
	 -\div(\couplestress)-2\mu_c\left(\half\curl u -\omega\right) = f_{\omega}\quad& \text{ in }\Omega,\label{eq:strong_form_angular_mom}\\
	u=u_D,\quad \omega=\omega_D\quad& \text{ on }\Gamma_D,\\
	  \sigma_n + \mu_c\left(\half\curl u-\omega\right)\times n= g_u,\quad \couplestress_n=g_{\omega}\quad& \text{ on }\Gamma_N,
	\end{flalign}
\end{subequations}
where $f_u$ and $f_{\omega}$ are body forces acting on $u$ and $\omega$, respectively. Further, we use the notation $\sigma_n:=\sigma n$ for the normal component of a tensor. Equation \eqref{eq:strong_form_lin_mom} is the balance of linear momentum, and \eqref{eq:strong_form_angular_mom} is the balance of angular momentum.

\subsection{Linear Cosserat elasticity as twisted de Rham complex}
\label{subsec:cosserat-twisted}
Cosserat energy \eqref{eq:cosserat-energy} corresponds to the \emph{Hodge-Laplacian} problem of the elasticity version of the \emph{twisted de Rham complex} at index zero \cite{vcap2022bgg}. This observation motivates our choices of finite element spaces. In this section, we explain this claim. The \emph{Bernstein-Gelfand-Gelfand} (BGG) construction in \cite{arnold2021complexes,vcap2022bgg} starts with the following diagram
\begin{equation}\label{elasticity-diagram}
	\begin{tikzcd}
	0 \!\arrow{r}\!&\!\left [H^{q}\right]^{3} \!\arrow{r}{\grad}  \!&\!\left[H^{q-1}\right]^{3\times 3}   \!\arrow{r}{\curl}\! &\!\left[H^{q-2}\right]^{3\times 3} \!\arrow{r}{\div} \! & \!\left[H^{q-3}\right]^{3}\!\arrow{r}{} \!& \!0\\
	0 \!\arrow{r}\! &\!\left[H^{q-1}\right]^{3} \!\arrow{r}{\grad}\! \arrow[ur, "\mskw"]\!&\!\left[H^{q-2}\right]^{3\times 3}  \! \arrow{r}{\curl} \arrow[ur, "-\mathcal{S}"]\!&\!\left[H^{q-3}\right]^{3\times 3}  \!\arrow{r}{\div}\arrow[ur, "-2\vskw"]\! &\! \left[H^{q-4}\right]^{3}  \!\arrow{r}{} \!& \!0.
	 \end{tikzcd}
	\end{equation}
Here $\left [H^{q}\right]^{3}$ denotes a vector-valued function space, for which each component is in the Sobolev space $H^{	q}$, and $\left[H^{q-1}\right]^{3\times 3} $ denotes the 3-by-3 matrix version, etc. The operator $\mathcal{S}$ is defined by $\mathcal{S}u=u^{T}-\tr(u)I$, where $u$ is a matrix and $u^T$ denotes the transpose of $u$. 
Both rows of \eqref{elasticity-diagram} are vector-valued de~Rham complexes in 3D, which are connected by algebraic operators.  One may derive a {\it twisted de~Rham complex} (or, twisted complex, for short) and a {\it BGG complex} (the elasticity complex) from \eqref{elasticity-diagram}. Each space $Y^{\bs}$ in the twisted complex consists of two components, one from the first row and one from the second, and the operators $d_{V}^{\bs}$ have the general form 
$$
d_{V}^{k}:=\left (
\begin{array}{cc}
d^{k} & -S^{k} \\
0 & d^{k}
\end{array}
\right ),
$$
where $S^{k}$ is the $k$-th diagonal operator, and $d^k$ denotes the exterior derivative translating into the differential operators in \eqref{elasticity-diagram}. We denote the twisted complex by
\begin{equation}\label{A-sequence-0}
	\begin{tikzcd}
0\arrow{r}& Y^{0} \arrow{r}{d_{V}^{0}} &Y^{1} \arrow{r}{d_{V}^{1}} &Y^{2} \arrow{r}{d_{V}^{2}} &Y^{3} \arrow{r}{} & 0,
	 \end{tikzcd}
	\end{equation}
	where $Y^{0}=\left [H^{q}\right]^{3}\times \left[H^{q-1}\right]^{3}$, $Y^{1}=\left[H^{q-1}\right]^{3\times 3}  \times \left[H^{q-2}\right]^{3\times 3}  $, $Y^{2}=\left[H^{q-2}\right]^{3\times 3}  \times \left[H^{q-3}\right]^{3\times 3}  $, and 
	$Y^{3}=\left [H^{q-3}\right]^{3}\times \left[H^{q-4}\right]^{3}$, and the operators are
	$$
d_{V}^{0}:=\left (
\begin{array}{cc}
\grad & -\mskw\\
0 & \grad
\end{array}
\right ), \quad d_{V}^{1}:=\left (
\begin{array}{cc}
\curl & \mathcal{S}\\
0 & \curl
\end{array}
\right ), \quad d_{V}^{2}:=\left (
\begin{array}{cc}
\div & -2\vskw\\
0 & \div
\end{array}
\right ).
	$$
The sequence \eqref{A-sequence-0} is a complex since $d_{V}^{k}\circ d_{V}^{k-1}=0$ for all $k$ due to the anti-commutativity condition $dS=-Sd$, which holds for the operators in \eqref{elasticity-diagram}. The name ``twisted complex'' comes from the observation that the operators in \eqref{A-sequence-0} have an off-diagonal part $-S$, which twists the two de~Rham complexes. The BGG machinery aims to eliminate spaces in \eqref{elasticity-diagram} connected by the connecting maps as much as possible, leading to the following BGG complex consisting of the kernels and cokernels of the connecting maps:
\begin{equation}\label{sequence:hs}
	\begin{tikzcd}
	0\arrow{r} & \left [H^{q}\right]^{3} \arrow{r}{{\sym\grad}} &\left [H^{q-1}\right]_{\sym}^{3\times 3} \arrow{r}{\inc} &\left [H^{q-3}\right]_{\sym}^{3\times 3} \arrow{r}{\div} &\left [H^{q-4}\right]^{3} \arrow{r} & 0,
	\end{tikzcd}
\end{equation}
where $\inc :=\curl\circ\, \mathcal{S}^{-1}\circ \curl$ is the incompatibility operator 
and $\left[H^{q-1}\right]_{\sym}^{3\times 3}$ denotes the space of symmetric matrices where each entry is a function in the Sobolev space $H^{q-1}$. 
This is the most convenient functional analytic setting for deriving the elasticity (BGG) complex. However, to get well-posed formulations of the Hodge-Laplacian problem, we have 
many other options in the choices of spaces and boundary conditions.  
For the Cosserat problem, we will use the following variants of \eqref{A-sequence-0}:
\begin{equation}\label{elasticity-diagram-Hd}
	\begin{tikzcd}
	0 \!\arrow{r}&\!\left [H^{1}\right]^{3} \!\arrow{r}{\grad}  &\![H(\curl)]^3   \arrow{r}{\curl}  \!&\![H(\div)]^3  \!\arrow{r}{\div}  \!&\left [L^{2}\right]^{3}\! \arrow{r}{} & \!0\\
	0 \!\arrow{r} &\!\left [H^{1}\right]^{3} \!\arrow{r}{\grad} \arrow[ur, "\mskw"]\!& \![H(\curl)]^3  \!\arrow{r}{\curl} \arrow[ur, "-\mathcal{S}"]\!&\![H(\div)]^3  \!\arrow{r}{\div}\arrow[ur, "2\vskw"] \!& \!\left [L^{2}\right]^{3} \!  \arrow{r}{} & \!0,
	 \end{tikzcd}
	\end{equation} 
or the version with homogeneous Dirichlet boundary conditions:
\begin{equation}\label{elasticity-diagram-Hd0}
	\begin{tikzcd}
		0 \!\arrow{r}\!&\!\left [H_{0}^{1}\right]^{3} \!\arrow{r}{\grad}  \!&\![H_{0}(\curl)]^3  \! \arrow{r}{\curl}  &\![H_{0}(\div)]^3  \!\arrow{r}{\div} \!&\!\left [L_{0}^{2}\right]^{3} \!\arrow{r}{} \!&\! 0\\
		0 \!\arrow{r} \!&\!\left [H_{0}^{1}\right]^{3} \!\arrow{r}{\grad} \arrow[ur, "\mskw"]\!&\![H_{0}(\curl)]^3  \!\arrow{r}{\curl} \arrow[ur, "-\mathcal{S}"]\!&\![H_{0}(\div)]^3 \! \arrow{r}{\div}\arrow[ur, "2\vskw"] \!& \!\left [L_{0}^{2}\right]^{3}  \! \arrow{r}{} \!&\!0.
	 \end{tikzcd}
	\end{equation} 

Above, we used the vector-valued function spaces 
\begin{flalign*}
	&H(\curl)\!:=\!\{u\!\in \![L^{2}(\Omega)]^3: \curl u \in [L^{2}(\Omega)]^3\},\, H(\div)\!:=\!\{u\!\in\! [L^{2}(\Omega)]^3: \div u \in L^{2}(\Omega)\},\\
	&H_0(\curl)\!:=\!\{u\!\in\! H(\curl): \tr_t u \!=\!0 \text{ on }\partial\Omega\},\,\, H_0(\div)\!:=\!\{u\!\in\! H(\div): \tr_n u \!=\!0 \text{ on }\partial\Omega\},
\end{flalign*}
where $\tr_t$ and $\tr_n$ denote, respectively, the tangential and normal trace operators, which for continuous $u$ reads $\tr_t u = u\times n$ and $\tr_n u = \langle u, n\rangle$. In \eqref{elasticity-diagram-Hd}--\eqref{elasticity-diagram-Hd0}, the algebraic operators $-\mskw$, $\mathcal{S}$, and $2\vskw$ are indeed maps between the spaces in the diagrams because these maps (anti)commute with the differential operators $\grad$, $\curl$, and $\div$.  
The same argument holds for partial boundary conditions when we impose Dirichlet conditions only on $\Gamma_{D}$. 
In this case, we need a slight modification in \eqref{elasticity-diagram-Hd0}: the last 
space is $\left[L_{0}^{2}\right]^{3}$ (zero mean) if the entire boundary is Dirichlet, i.e., $\Gamma_{D}=\partial \Omega$. Otherwise we use $\left[L^{2}\right]^{3}$.

\begin{remark}
The derivation of the elasticity complex from the twisted complex is based on the following diagram (see, e.g., \cite[Section 11.2]{Arnold.D;Falk.R;Winther.R.2006a} and \cite{arnold2021complexes}):
{ \begin{equation*}\label{cplx:twisted}
  \begin{tikzcd}[ampersand replacement=\&, column sep=2.0cm]
0\arrow{r}\&
\left ( \begin{array}{c}
C^{\infty}\otimes \mathbb{R}^{3} \\
C^{\infty}\otimes \mathbb{R}^{3}
 \end{array}\right )\arrow{d}{\pi^{0}}
 \arrow{r}{
 \begin{pmatrix} \grad & -\mskw \\ 0 & \grad \end{pmatrix}
 }\&  \left ( \begin{array}{c}
C^{\infty}\otimes \mathbb{R}^{3\times 3}  \\
C^{\infty}\otimes \mathbb{R}^{3\times 3} 
 \end{array}\right )\arrow{d}{\pi^{1}} \arrow{r}{
 \begin{pmatrix} \curl & S \\ 0 & \curl \end{pmatrix}
 } \&\cdots
 \\0 \arrow{r} \& \Gamma^{0}\arrow{r}{ \begin{pmatrix} \grad & -\mskw \\ 0 & \grad \end{pmatrix}}\arrow{d}{\cong}\& \Gamma^{1}\arrow{d}{\cong}\arrow{r}{\begin{pmatrix} \curl & S \\ 0 & \curl \end{pmatrix}}\&\cdots
  \\0 \arrow{r} \& C^{\infty}\otimes \mathbb{R}^{3}\arrow{r}{\sym\grad}\& C^{\infty}\otimes \mathbb{R}^{3\times 3}_{\sym}\arrow{r}\&\cdots
\end{tikzcd}
\end{equation*}}
Here, the first line is the twisted complex. In the second line, we have 
\begin{flalign*}
&\Gamma^{0}:=\{(u, \omega)\in C^{\infty}\otimes \mathbb{R}^{3} \times C^{\infty}\otimes \mathbb{R}^{3} : [\grad u-\mskw \omega]\perp \ran (\mskw)\},\\
&\Gamma^{1}:=\{(\sigma, \tau)\in C^{\infty}\otimes \mathbb{R}^{3\times 3} \times C^{\infty}\otimes \mathbb{R}^{3\times 3} : [\curl \sigma+S \tau]\perp \ran (S)\}.
\end{flalign*}
The above condition for $\Gamma^{0}$ implies that, if $(u, \omega)\in \Gamma^{0}$, then $\omega$ is determined by $u$ via  
$\omega=\half\curl u$
(such that $\grad u-\mskw \omega$ is orthogonal to all skew-symmetric matrices); the condition for $\Gamma^{1}$ implies that, if $(\sigma, \tau)\in \Gamma^{1}$, then $\tau$ is determined by $\sigma$ as $\tau=-S^{-1}\curl \sigma$. This complex with $\Gamma^{0}$ and $\Gamma^{1}$ is a subcomplex of the twisted complex -- the spaces are subspaces, while the operators are the same. The idea of this reduction from the twisted complex to the $\Gamma$ complex is that the first component determines the second component in each space. 
The $\Gamma$ complex is further isomorphic to the elasticity complex (the last line in the diagram). 

In summary, standard elasticity (with $u$) can be viewed as a special case of the Cosserat model where the displacement determines the rotation through $\omega=\half\curl u$. 
Another way to derive standard elasticity model from the Cosserat model is to set the parameter $\mu_{c}=0$, cf. Remark~\ref{rem:linear_elasticity_limit}.
\end{remark}

The following observation relates the twisted complex with Cosserat elasticity.
\begin{lemma}	
\label{lem:relation_Cosserat_twisted}
Define the inner product $(\cdot,\cdot)_C$ on $Y^{1}$ by the relation
	$$
	(d_{V}^{0}(u, \omega), d_{V}^{0}(u, \omega))_{C} = \frac{1}{2}\int_{\Omega}(\|\grad u -\mskw \omega\|_{C_{1}}^{2}+\|\grad\omega\|_{C_{2}}^{2})\,dx. 
	$$
	Then the Cosserat energy \eqref{eq:cosserat-energy} is the Hodge-Laplacian 
	\begin{flalign*}
		(d_{V}^{0}(u, \omega), d_{V}^{0}(u, \omega))_{C} -\int_{\Omega}(\langle f_u,u\rangle + \langle f_\omega,\omega\rangle)\,dx  = \mathcal{E}^{\mathrm{Cosserat}}(u,\omega).
	\end{flalign*}
\end{lemma}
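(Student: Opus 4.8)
The plan is to unfold the definitions on both sides and check that they agree term by term; once the inner product $(\cdot,\cdot)_C$ is made explicit, the statement is essentially an identity. First I would record that, by the definition of $d_V^0$, applying it to the pair $(u,\omega)$ yields $d_V^0(u,\omega)=(\grad u-\mskw\omega,\grad\omega)\in Y^{1}$, whose two components are exactly the first Cosserat stretch tensor $\varepsilon=\grad u-\mskw\omega$ and the micropolar curvature tensor $\kappa=\grad\omega$. I would then read the relation defining $(\cdot,\cdot)_C$ as prescribing the weighted $L^2$ inner product on $Y^1$ whose two blocks carry the material tensors, namely $((\varepsilon_1,\kappa_1),(\varepsilon_2,\kappa_2))_C=\frac12\int_{\Omega}(\langle C_1\varepsilon_1,\varepsilon_2\rangle+\langle C_2\kappa_1,\kappa_2\rangle)\,dx$, and observe that its diagonal reproduces by polarization the quadratic form stated in the lemma. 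This already makes $(\cdot,\cdot)_C$ a genuine inner product provided $C_1$ and $C_2$ induce positive (semi-)definite forms.

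With this in hand, substituting $d_V^0(u,\omega)$ into the Hodge--Laplacian functional returns $\frac12\int_{\Omega}(\|\grad u-\mskw\omega\|_{C_1}^2+\|\grad\omega\|_{C_2}^2)\,dx$ for the bilinear part, while the linear part $-\int_{\Omega}(\langle f_u,u\rangle+\langle f_\omega,\omega\rangle)\,dx$ coincides verbatim with the forcing in $\mathcal{E}^{\mathrm{Cosserat}}$. This is exactly the first line of \eqref{eq:cosserat-energy}, which completes the identification with $\mathcal{E}^{\mathrm{Cosserat}}(u,\omega)$.

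For completeness I would also verify that the first line of \eqref{eq:cosserat-energy} equals its expanded second line, which is the only computation carrying genuine content. Splitting each argument into its symmetric, skew, and trace parts, orthogonality in the Frobenius inner product gives $\langle C_1\varepsilon,\varepsilon\rangle=\|\sym\grad u\|_{\mathcal{C}}^2+\mu_c\|\skw\varepsilon\|^2$, since $\sym\varepsilon=\sym\grad u$ and $\tr\varepsilon=\div u$. The skew term is handled by the pointwise identity $\skw\grad v=\tfrac12\mskw(\curl v)$ together with $\|\mskw w\|^2=2\|w\|^2$, which turn $\skw\varepsilon=\mskw(\half\curl u-\omega)$ into $\|\skw\varepsilon\|^2=2\|\half\curl u-\omega\|^2$ and thus yield the coupling term $\mu_c\|\half\curl u-\omega\|^2$. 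The analogous bookkeeping for $C_2$ produces the $\sym\grad\omega$, $\curl\omega$, and $\div\omega$ terms with coefficients $\tfrac{\gamma+\beta}{2}$, $\tfrac{\gamma-\beta}{4}$, and $\tfrac{\alpha}{2}$.

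I do not expect a serious obstacle, since the lemma is a definitional reformulation rather than a deep result: the heart of the matter is matching the abstract energy norms against their explicit expansions through the elementary algebraic identities above. The only point requiring care is the well-definedness of $(\cdot,\cdot)_C$ as an inner product, which rests on $\mu>0$, $\lambda\geq 0$, $\mu_c\geq 0$ and the standing positivity assumption on $C_2$ from Remark~\ref{rem:linear_elasticity_limit}; the degeneracy of the skew block when $\mu_c=0$ is precisely the decoupling into elasticity and a vector-Poisson problem described there.
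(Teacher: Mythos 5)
Your proof is correct: the paper states this lemma as an observation without an explicit proof, and your unfolding of $d_V^0(u,\omega)=(\grad u-\mskw\omega,\grad\omega)$ together with the blockwise weighted $L^2$ inner product carrying $C_1$ and $C_2$ is exactly the intended (definitional) argument. The algebraic identities you verify for completeness ($\skw\grad u=\tfrac12\mskw(\curl u)$, $\|\mskw w\|^2=2\|w\|^2$, $\sym\varepsilon=\sym\grad u$, and the resulting coefficients $\tfrac{\gamma+\beta}{2}$, $\tfrac{\gamma-\beta}{4}$, $\tfrac{\alpha}{2}$) are all correct and reproduce the expansion that the paper already asserts within \eqref{eq:cosserat-energy}.
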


\subsection{Primal formulation for Cosserat elasticity}
\label{subsec:cosserat-formulations}

In this section, we derive and discuss the primal formulation for Cosserat elasticity and its discretization.

\subsubsection{Primal formulation}
\label{subsubsec:cosserat-primal}
Let $H^1_{\Gamma}(\Omega)=\{u\in H^1(\Omega)\,:\, \tr u=0 \text{ on }\Gamma\}$ be the set of $H^1$ functions, with zero boundary conditions on $\Gamma\subset\partial\Omega$ and $H^{\half}(\Gamma)$ the trace space of $H^1_{\Gamma}(\Omega)$. We denote with $H^{-1}(\Omega)$ and $H^{-\half}(\Gamma)$ the dual spaces of $H^1_{\Gamma}(\Omega)$ and $H^{\half}(\Gamma)$, respectively. The \emph{primal variation formulation} of \eqref{eq:strong_cosserat}, cf. \eqref{eq:cosserat-energy} and Lemma~\ref{lem:relation_Cosserat_twisted}, reads as follows.
\begin{problem}[Primal formulation]
	\label{prob:primal_cosserat}
	Find $(u,\omega)\in [H^1(\Omega)]^3\times [H^1(\Omega)]^3$ such that $u=u_D$ and $\omega=\omega_D$ on $\Gamma_D$ and for all $(v,\xi)\in [H^1_{\Gamma_D}(\Omega)]^3\times [H^1_{\Gamma_D}(\Omega)]^3$ there holds
	\begin{flalign}
		\label{eq:variation_cosserat}
		a((u,\omega),(v,\xi))=f(v,\xi),
	\end{flalign}
	where
	\begin{subequations}
		\label{eq:def_primal_cosserat}
		\begin{flalign}
        \begin{split}
      a((u,\omega),(v,\xi))&:= \int_{\Omega} \left(\langle\grad u-\mskw \omega,\grad v-\mskw \xi\rangle_{C_1}+ \langle \grad\omega,\grad\xi\rangle_{C_2}\right)\,dx,
        \end{split}
			\\
			f(v,\xi) &:= \int_{\Omega} \left(\langle f_u,v\rangle+\langle f_\omega,\xi\rangle\right)\,dx+\int_{\Gamma_N}\left(\langle g_u,v\rangle+\langle g_\omega,\xi\rangle\right)\,ds.
		\end{flalign}
	\end{subequations}
\end{problem}
\begin{lemma}[Well-posedness]
	Let $|\Gamma_D|>0$ have positive measure, $u_D,\omega_D\in H^{\half}(\Gamma_D)$, $f_u,f_\omega\in [H^{-1}(\Omega)]^3$, and $g_u,g_\omega\in [H^{-1/2}(\Gamma_N)]^3$. Then Problem~\ref{prob:primal_cosserat} has a unique solution $(u,\omega)\in [H^1(\Omega)]^3\times [H^1(\Omega)]^3$ and there holds the stability estimate
	\begin{align*}		
    \|u\|_{H^1(\Omega)}\!+\!\|\omega\|_{H^1(\Omega)}\!\leq\! C\big(\|f_u\|_{H^{-1}(\Omega)}\!+\!\|f_\omega\|_{H^{-1}(\Omega)}\!+\!\|g_u\|_{H^{-\half}(\Gamma_N)}\!+\!\|g_\omega\|_{H^{-\half}(\Gamma_N)}\big),
	\end{align*}
	where the constant $C>0$ depends especially on the Lam\'e parameter $\lambda$, the Cosserat coupling constant $\mu_c$, and the aspect ratio of the domain $\Omega$.
\end{lemma}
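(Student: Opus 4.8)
The plan is to apply the Lax--Milgram theorem to the symmetric bilinear form $a$ on the Hilbert space $V:=[H^1_{\Gamma_D}(\Omega)]^3\times[H^1_{\Gamma_D}(\Omega)]^3$, after reducing the inhomogeneous Dirichlet problem to a homogeneous one. Since $u_D,\omega_D\in H^{\half}(\Gamma_D)$, the trace theorem supplies a bounded lift $(\tilde u_D,\tilde\omega_D)\in[H^1(\Omega)]^3\times[H^1(\Omega)]^3$ matching the Dirichlet data on $\Gamma_D$ and satisfying $\|\tilde u_D\|_{H^1}+\|\tilde\omega_D\|_{H^1}\lesssim\|u_D\|_{H^{1/2}(\Gamma_D)}+\|\omega_D\|_{H^{1/2}(\Gamma_D)}$. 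Writing $(u,\omega)=(u_0,\omega_0)+(\tilde u_D,\tilde\omega_D)$ with $(u_0,\omega_0)\in V$, Problem~\ref{prob:primal_cosserat} becomes $a((u_0,\omega_0),(v,\xi))=f(v,\xi)-a((\tilde u_D,\tilde\omega_D),(v,\xi))$ for all $(v,\xi)\in V$, so it suffices to establish continuity and coercivity of $a$ on $V$.

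Continuity I expect to be routine: the material tensors $C_1$ and $C_2$ are bounded linear maps (with operator norms controlled by $\mu,\lambda,\mu_c$ and by $\alpha,\beta,\gamma$, respectively), $\mskw$ is bounded, and Cauchy--Schwarz yields $|a((u,\omega),(v,\xi))|\le C_{\mathrm{cont}}(\|u\|_{H^1}+\|\omega\|_{H^1})(\|v\|_{H^1}+\|\xi\|_{H^1})$, where $C_{\mathrm{cont}}$ grows with $\mu_c$ and $\lambda$. Boundedness of $f$ on $V$ follows from the assumed regularity of the data together with the trace theorem for the Neumann term on $\Gamma_N$.

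Coercivity is the heart of the argument. Using the algebraic identity behind the second line of \eqref{eq:cosserat-energy}, I would write, for $(u,\omega)\in V$,
\[
a((u,\omega),(u,\omega))=\int_\Omega\Big(\|\sym\grad u\|_{\mathcal{C}}^2+2\mu_c\big\|\half\curl u-\omega\big\|^2+\|\grad\omega\|_{C_2}^2\Big)\,dx.
\]
Since $\mu_c\ge 0$, the middle term is nonnegative and may be dropped from below. For the first term, $\|\sym\grad u\|_{\mathcal{C}}^2\ge 2\mu\|\sym\grad u\|_{L^2}^2$, and because $|\Gamma_D|>0$ with $u\in[H^1_{\Gamma_D}(\Omega)]^3$, Korn's first inequality gives $\|\sym\grad u\|_{L^2}\ge c_K\|u\|_{H^1}$. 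For the last term, the standing assumption that $C_2$ is positive definite yields $\|\grad\omega\|_{C_2}^2\ge c_2\|\grad\omega\|_{L^2}^2$, and the Poincar\'e--Friedrichs inequality on $[H^1_{\Gamma_D}(\Omega)]^3$ gives $\|\grad\omega\|_{L^2}\ge c_P\|\omega\|_{H^1}$. Combining these, $a((u,\omega),(u,\omega))\ge\alpha_0(\|u\|_{H^1}^2+\|\omega\|_{H^1}^2)$ with a coercivity constant $\alpha_0$ that is \emph{independent of $\mu_c$} but depends on $\mu$, on the smallest eigenvalue of $C_2$, and---through the Korn and Poincar\'e constants---on the aspect ratio of $\Omega$.

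With continuity and coercivity in hand, Lax--Milgram yields a unique $(u_0,\omega_0)\in V$ solving the reduced problem, hence a unique $(u,\omega)$ solving Problem~\ref{prob:primal_cosserat}. The stated estimate then follows from $\|(u_0,\omega_0)\|_V\le\alpha_0^{-1}(\|f\|_{V'}+C_{\mathrm{cont}}\|(\tilde u_D,\tilde\omega_D)\|_V)$ and adding back the lift; the continuity constant $C_{\mathrm{cont}}$ entering through the Dirichlet lift is precisely what makes the final constant $C$ depend on $\lambda$ and $\mu_c$. The main obstacle I anticipate is Korn's first inequality: it is the only nontrivial analytic input, it is exactly where the hypothesis $|\Gamma_D|>0$ is used (to exclude nonzero rigid-body motions from the kernel of $\sym\grad$ on $[H^1_{\Gamma_D}(\Omega)]^3$), and its constant carries the dependence on the domain geometry. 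The remaining ingredients are standard.
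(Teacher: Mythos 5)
Your proposal is correct and follows exactly the route the paper takes: its entire proof is the one-line remark ``Follows by Lax--Milgram Lemma using Korn's inequality,'' and you have simply supplied the standard details (Dirichlet lift, continuity, coercivity via the identity in \eqref{eq:cosserat-energy} together with Korn's first inequality and Poincar\'e, plus positive definiteness of $C_2$). Your observation that the coercivity constant is independent of $\mu_c$ and that the $\mu_c$- and $\lambda$-dependence enters only through the continuity constant (hence the Dirichlet lift) is consistent with, and slightly sharper than, the paper's statement.
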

\begin{proof}
	Follows by Lax-Milgram Lemma using Korn's inequality.
\end{proof}
The lack of robustness in the aspect ratio originates from Korn's inequality. For nearly incompressible materials, the Lam\'e parameter $\lambda$ goes to infinity. In this work, the Cosserat coupling constant is of special interest to us. Its limit $\mu_c\to\infty$ enforces the equality $\half\curl u=\omega$, cf. \eqref{eq:cosserat-energy}.

\begin{remark}[Couple stress limit problem]
	In the formal limit $\mu_c\to\infty$ we can eliminate $\omega$ by $u$ via $\half\curl u=\omega$. This yields the following fourth-order \emph{couple stress problem} \cite{PG2008,CLS2021}
    \begin{flalign*}
        -\div(\mathcal{C}(\sym\grad u)) -\half\curl\big(\div C_2(\half\grad\curl u)-f_{\omega}\big) = f_u.
	\end{flalign*}
	The investigation of stable mixed finite elements for the couple stress problem circumventing the necessity of $H^2$-conforming elements will be topic of future research.
\end{remark}

\subsubsection{Lagrange finite elements}
\label{subsec:primal_formulation}

Let $\mathcal{T}$ be a triangulation of the domain $\Omega\subset\R^3$ consisting of (possibly polynomially curved) tetrahedra. Denote for $T\in\mathcal{T}$ the set of polynomials up to order $k$ by $\mathcal{P}^k(T)$. Then $\mathcal{P}^k(\mathcal{T})=\Pi_{T\in\mathcal{T}}\mathcal{P}^k(T)$ is the set of all elementwise polynomials of order $k$ without any continuity assumption over element interfaces. We define the scalar-valued \emph{Lagrangian} finite element space as an $H^1$-conforming subspace by
\begin{flalign*}
    	\Lag^k&:=\{u_h\in\mathcal{P}^k(\mathcal{T})\,:\, u_h \text{ is continuous}\} = \mathcal{P}^k(\mathcal{T})\cap C^0(\Omega)\subset H^1(\Omega),\\
 \Lag^k_{\Gamma}&:=\{u_h\in \Lag^k\,:\, u=0 \mbox{ on } \Gamma\}\subset H^1_{\Gamma}(\Omega)
\end{flalign*}
and its vector-valued version $\vLag^k:=[\Lag^k]^3$.

A straightforward approach to discretize  \eqref{eq:variation_cosserat} is to use Lagrangian finite elements for both the displacement $u$ and the rotation $\omega$. 
In the following, we always assume that the prescribed Dirichlet data $u_D$ and $\omega_D$ are traces of the used finite element spaces. The primal discrete formulation reads:

\begin{problem}[Primal discretization method]
\label{prob:primal_lagrange_formulation}
Let $k\geq 1$ be an integer. Find $(u_h,\omega_h)\in \vLag^k\times\vLag^k$ such that $u_h=u_D$ and $\omega_h=\omega_D$ on $\Gamma_D$ and
 for all $(v_h,\xi_h)\in \vLag^k_{\Gamma_D}\times\vLag^k_{\Gamma_D}$
\begin{flalign}
\label{eq:primal_method}
a((u_h,\omega_h),(v_h,\xi_h))=f(v_h,\xi_h),
\end{flalign}
where $a(\cdot,\cdot)$ and $f(\cdot,\cdot)$ are defined as in \eqref{eq:def_primal_cosserat}.
\end{problem}
Well-posedness of \eqref{eq:primal_method} follows directly from the continuous level due to the conforming discretization approach $\Lag^k\subset H^1(\Omega)$. Using Cea's Lemma, we directly obtain the quasi-best approximation of the finite element solution and a-priori convergence estimates regarding the mesh-size $h$. The constant $C=C(\mu_c,\lambda,\Omega)$, however, depends on the Cosserat coupling constant $\mu_c$, Lam\'e parameter $\lambda$, and due to Korn's inequality on the aspect ratio of the domain $\Omega$
\begin{flalign*}
    \| (u_h,\omega_h) - (u,\omega)\|_{H^1\times H^1}&\le C \inf\limits_{(v_h,\xi_h)\in \vLag^k\times\vLag^k} \| (v_h,\xi_h) - (u,\omega)\|_{H^1\times H^1}\\
    &\leq C\,h^k\left(|u|_{H^{k+1}}+|\omega|_{H^{k+1}}\right) .
\end{flalign*}
We demonstrate in Section~\ref{sec:numerics} that the primal method suffers from locking for $\mu_c\to\infty$.

\section{Mixed TDNNS and MCS formulations for Cosserat elasticity}
\label{sec:function_spaces}

The key idea to achieve robust discretization with respect to  $\mu_{c}$ is that {\it the space of the rotations $\omega$ should be large enough to contain $\mskw^{\dagger}\circ \grad=\vskw \circ \grad=curl$ of the space of the displacement $u$}, where $\mskw^{\dagger}$ is the pseudo-inverse of $\mskw$
\clearpage
  \begin{equation}\label{idea-marked}
\begin{tikzcd}
u\arrow{r}{\grad} &~ \\
\omega \arrow{r}{\grad} \arrow[ur, "-\mskw"] & \couplestress'.
\end{tikzcd}
\end{equation}
\vbox{\vspace{-6.cm}
\leftline{\hspace{7.3cm}\vspace{0.3cm}{
\begin{tikzpicture}[scale=1.15, color=red]
\draw[very thick, opacity=.75, ->, rounded corners] (5.5,1.5) -- ++(1.4, 0.0) -- ++(-1.5,-1.2)  ;
\end{tikzpicture}}
}} 
We start with the natural choice $u\in [H^{1}(\Omega)]^{3}$ (correspondingly, the Lagrange space on the discrete level). Then $\curl u\in \curl ([H^{1}(\Omega)]^{3})\subset H(\div)$ (correspondingly, $\curl u$ is in the Raviart-Thomas finite element space on the discrete level). This motivates us to use the $H(\div)$ space for $\omega$. Then $\grad \omega$ is in a weak $H^{-1}$-based Sobolev space (a Dirac delta on the discrete level). We must introduce the \emph{couple stress} tensor $\couplestress = C_2(\grad\omega)$ in a proper space to accommodate the pair $\langle \grad \omega, m \rangle$. We used the notation $\couplestress'$ in \eqref{idea-marked} to indicate that 
$\couplestress'$ is in the dual space. In this case, the choice of continuous and discrete spaces for $m$ coincides with the \emph{mass conserving mixed stress (MCS)} method for Stokes equations \cite{gopalakrishnan2020mass0}. Therefore, we refer to this formulation as the {\it MCS method}. Note that $\couplestress$ is not necessarily symmetric. 
  
Although the above idea fixes the dependence on $\mu_{c}$, since we choose $u\in [H^{1}(\Omega)]^{3}$ as the displacement formulation like in classical elasticity, the numerical scheme still suffers from standard volume and shear locking in elasticity. This inspires us to run the above idea starting with $u\in H(\curl)$, mimicking the \emph{tangential-displacement normal-normal-stress (TDNNS)} method for elasticity, which was also used for linear elasticity \cite{pechstein2011tangential} and Reissner--Mindlin plates \cite{PS17}. Now $\grad u$ is also a distribution, and we define the \emph{elasticity stress} tensor $\sigma = \mathcal{C}(\sym\grad u)$ to evaluate $\grad u$. The stresses are settled in specific matrix-valued function spaces and finite element spaces introduced in the following sections. Introducing $\sigma$ and $m$ leads to a mixed scheme. We remark that here the idea leading to mixed formulations is different from \cite{boon2024mixedfiniteelementmethods}, which explicitly used the last two spaces in the twisted de~Rham complex, mimicking the formulation for the mixed Poisson problem. 

\subsection{Continuous setting}
For matrix fields, we use the convention that differential operators act row-wise. For simplicity of presentation, we focus in this section on the boundary conditions corresponding to \eqref{elasticity-diagram-Hd0}, i.e., we impose $u=\omega=0$ on the boundary. Following \cite{gopalakrishnan2020mass0,pechstein2011tangential}, we define the following matrix-valued function spaces
\begin{flalign}
H(\curl\div, \M)&:=\{\couplestress \in [L^{2}(\Omega)]^{3\times 3}: \curl\div \couplestress\in [H^{-1}(\Omega)]^{3}\}\nonumber\\ 
&=\{\couplestress \in [L^{2}(\Omega)]^{3\times 3}: \div \couplestress\in H_{0}(\div)^{\ast}\},\label{eq:hcurldiv}\\
H(\div\div, \S)&:=\{\sigma \in [L^{2}(\Omega)]^{3\times 3}_{\sym}: \div \div \sigma\in H^{-1}(\Omega)\}\nonumber\\
&=\{\sigma \in [L^{2}(\Omega)]^{3\times 3}_{\sym}: \div \sigma\in H_{0}(\curl)^{\ast}\},\label{eq:hdivdiv}
\end{flalign}
where we used that the dual space $H_0(\div)^{\ast}=H^{-1}(\curl):=\{u\in [H^{-1}(\Omega)]^3\,:\, \curl u\in [H^{-1}(\Omega)]^3\}$ and analogously that $H_0(\curl)^{\ast}=H^{-1}(\div)$. Note that the notation is different from some 
literature where the spaces $H(\curl\div, \M)$ and $H(\div\div, \S)$ contain $L^{2}$ fields with second derivatives in $L^{2}$ instead of $H^{-1}$.

The first space \eqref{eq:hcurldiv} represents the stress space on the continuous level of the MCS method for Stokes equations. Due to its definition, the duality-pairing
\begin{flalign}
	\label{eq:duality_curldiv}
	\langle \div\couplestress,\omega\rangle_{H(\div)^{\ast}}\!:= \!\langle \div\couplestress,\omega\rangle_{H_0(\div)^{\ast}\times H_{0}(\div)}\!=\!-\langle \grad \omega,\couplestress\rangle_{H(\curl\div)^{\ast}\times H(\curl\div)}
\end{flalign}
is well-defined for all $\couplestress\in H(\curl\div, \M)$ and $\omega\in H_{0}(\div)$. Space \eqref{eq:hdivdiv} is used for the moment stress tensor in the Hellan--Herrmann--Johnson (HHJ) \cite{Hel67,Her67,Joh73} method and the elasticity stress tensor in the TDNNS method for linear elasticity and Reissner--Mindlin plates. 
Analogously, the following duality-pairing is well-defined for all $\sigma\in H(\div\div, \S)$ and $u\in H_{0}(\curl)$
\begin{flalign}
	\label{eq:duality_divdiv}
	\langle \div\sigma,u\rangle_{H(\curl)^{\ast}}:= \langle \div\sigma,u\rangle_{H_0(\curl)^{\ast}\times H_{0}(\curl)}=-\langle \grad u,\sigma\rangle_{H(\div\div)^{\ast}\times H(\div\div)}.
\end{flalign}

We propose the following two mixed formulations for Cosserat elasticity.
\begin{problem}[MCS and TDNNS mixed formulations for linear Cosserat elasticity]
\label{prob:dual_mixed_mcs_tdnns}
Find $(u,\omega,\couplestress)\in [H_{0}^1(\Omega)]^3\times H_{0}(\div)\times H(\curl\div,\M)$ solving the Lagrangian 
\begin{flalign}
	\label{eq:mixed_MCS}
 \begin{split}
 		\mathcal{L}^{\couplestress}(u,\omega,\couplestress) &= \frac{1}{2}\int_{\Omega}\big(\|\grad u-\mskw \omega\|_{C_1}^{2}-\|\couplestress\|_{C_2^{-1}}^{2}\big)\,dx\\
        &\quad-\langle \div\couplestress,\omega\rangle_{H(\div)^{\ast}}- f(u,\omega,\couplestress)\to \min_{u,\omega}\max_{\couplestress}.
   \end{split}
 	\end{flalign}
Find $(u,\omega,\couplestress,\sigma)\in H_{0}(\curl)\times H_{0}(\div)\times H(\curl\div,\M)\times H(\div\div,\S)$ solving the Lagrangian
\begin{flalign}
	\label{eq:mixed_TDNNS_MCS}
		\begin{split}
			\mathcal{L}^{\couplestress,\sigma}(u&,\omega,\sigma,\couplestress) = \frac{1}{2}\int_{\Omega}\left(-\|\sigma\|^2_{\mathcal{C}^{-1}}+2\mu_c\left\|\half\curl u- \omega\right\|^{2}-\|\couplestress\|_{C_2^{-1}}^{2}\right)\,dx\\
			&\quad-\langle \div\sigma,u\rangle_{H(\curl)^{\ast}}-\langle \div\couplestress,\omega\rangle_{H(\div)^{\ast}}- f(u,\omega,\couplestress,\sigma)\to\min_{u,\omega}\max_{\couplestress,\sigma}.
		\end{split}
\end{flalign}
Here, the inverse of $\mathcal{C}^{-1}$ is the usual \emph{compliance} tensor of linear elasticity.
\end{problem}
The MCS formulation \eqref{eq:mixed_MCS} is based on the following diagram:
 \begin{equation*}
\begin{tikzcd}
\left[H_{0}^{1}(\Omega)\right]^{3}\arrow{r}{\grad} &~ \\
H_{0}(\div) \arrow{r}{\grad} \arrow[ur, "-\mskw"] & H(\curl\div, \M)^{\ast}
\end{tikzcd}, \qquad\qquad 
\begin{tikzcd}
u\arrow{r}{\grad} &~ \\
\omega \arrow{r}{\grad} \arrow[ur, "-\mskw"] & \couplestress',
\end{tikzcd}
 \end{equation*}
 where we used the notation $\couplestress'$ to indicate that $\couplestress$ is introduced in the dual space. The TDNNS-MCS formulation \eqref{eq:mixed_TDNNS_MCS} is based on the diagram 
 \begin{equation*}
\begin{tikzcd}
H_{0}(\curl)\arrow{r}{\grad} &H(\div\div, \S)^{\ast}\times [L^2(\Omega)]^{3\times 3}_{\skw} \\
H_{0}(\div) \arrow{r}{\grad} \arrow[ur, "-\mskw"] & H(\curl\div, \M)^{\ast}
\end{tikzcd}, \qquad\qquad 
\begin{tikzcd}
u\arrow{r}{\grad} &\sigma' \\
\omega \arrow{r}{\grad} \arrow[ur, "-\mskw"] & \couplestress'.
\end{tikzcd}
 \end{equation*}
Similarly, we use  $\sigma' $ and $\couplestress'$ to indicate that $\sigma$ and $\couplestress$ are in the dual spaces.

We recall that in the formal limit $\mu_c\to\infty$ we establish the constraint
\begin{flalign}
	\label{eq:constraint}
	\half\curl u=\omega.
\end{flalign}
Thus, a necessary condition to avoid locking 
in the finite element computations is that the chosen function spaces can represent \eqref{eq:constraint} exactly. In the primal setting \eqref{eq:variation_cosserat}, where $(u,\omega)\in [H^1(\Omega)]^3\times [H^1(\Omega)]^3$, the constraint is obviously not fulfilled. 
This indicates that locking might occur after discretization. The curl of a function in $[H^1(\Omega)]^3$ or $H(\curl)$ is in $H(\div)$. 
Hence, seeking the rotation field $\omega$ in $H(\div)$ is a natural choice and fulfills the constraint \eqref{eq:constraint} exactly. 
This indicates that the mixed formulations \eqref{eq:mixed_MCS} and \eqref{eq:mixed_TDNNS_MCS} have the potential to be robust with respect to the Cosserat coupling constant $\mu_c$, if the discretization is done carefully. The use of the TDNNS formulation in \eqref{eq:mixed_TDNNS_MCS} yields further beneficial properties:  
The proposed method becomes 
more robust with respect to anisotropic domains with large aspect ratio, i.e., avoids shear locking \cite{PS12}. Furthermore, 
by adding a stabilization term in the discrete setting, our method will be free of volumetric locking when $\lambda\to\infty$, i.e. in the incompressible limit  \cite{pechstein2011tangential}.
\subsection{Finite element spaces}
\label{subsec:fe_spaces}

We define several vector- and matrix-valued finite element spaces that will be used below. The \emph{Raviart--Thomas} (RT) \cite{Raviart.P;Thomas.J.1977a} and the \emph{Brezzi--Douglas--Marini} (BDM)  elements \cite{brezzi1985two} are used  
to discretize $H(\div)$ and the \emph{N\'ed\'elec} elements (of first and second kind) \cite{Nedelec.J.1986a} 
are used to discretize $H(\curl)$. They fit in the de~Rham complexes \cite{arnold2018finite,Arnold.D;Falk.R;Winther.R.2006a,Arnold.D;Falk.R;Winther.R.2010a}. For simplicity of presentation, we only use the RT and the N\'ed\'elec element of the second kind below, defined by 
\begin{align*}
&\RT^{k}:=\{u\in H(\div)\,:\, \forall T\in \mathcal{T}\,u|_{T}=a+bx, a \in [\mathcal{P}^{k}(T)]^{3}, b\in \tilde{\mathcal{P}}^{k}(T)\}\subset H(\div), \\
&\RT^{k}_{\Gamma}:= \{u_h\in \RT^k\,:\, \langle u_h, n\rangle = 0 \mbox{ on } \Gamma\} \subset H_{\Gamma}(\div),\\
&\NedII^{k}:=\{u\in H(\curl)\,:\, u|_{T}\in [\mathcal{P}^{k}(T)]^{3}, ~\forall T\in \mathcal{T}\} \subset H(\curl),\\
&{\NedII^{k}}_{,\Gamma}:= \{u_h\in \NedII^k\,:\, u_h\times n = 0 \mbox{ on } \Gamma\} \subset H_{\Gamma}(\curl),
\end{align*}
where the set of elementwise homogeneous polynomials of order $k$ is denoted by $\tilde{\mathcal{P}}^k(\mathcal{T})$.  
In the lowest order cases, $\RT^{0}$ consists of facet-based shape functions and $\NedII^{1}$ consists of edge-based basis functions. We have the following de~Rham complex for $k\geq 3$
$$
\begin{tikzcd}
0 \arrow{r} & 	\Lag^k \arrow{r}{\grad} &\NedII^{k-1} \arrow{r}{\curl} &\RT^{k-2} \arrow{r}{\div} &\mathcal{P}^{k-3}(\mathcal{T}) \arrow{r} &0.
\end{tikzcd}
$$

Let $\mathcal{F}$ denote the set of facets of $\mathcal{T}$. The MCS element \cite{gopalakrishnan2020mass0} has tangential-normal continuity and is a slightly non-conforming subspace of $H(\curl\div,\M)$ 
\begin{flalign}
\label{eq:mcs_fes}
\MCS^{k}:=\{\sigma_h\in  [\mathcal{P}^k(\mathcal{T}) ]^{3\times 3}\,:\, 
 n_{F}\times (\sigma_h n_{F})\text{ is continuous across $F\in\mathcal{F}$}\}. 
\end{flalign}

The TDNNS \cite{pechstein2011tangential} and HHJ methods discretize the stress tensor using symmetric piecewise polynomials with normal-normal continuity. We denote this space by
\begin{equation*}
\HHJ^{k}:=\{\sigma_h\in [\mathcal{P}^k(\mathcal{T}) ]^{3\times 3}_{\sym}\,:\, \sigma_{h,n_Fn_F} :=\langle \sigma_hn_{F}, n_{F}\rangle\text{ is continuous across $F\in\mathcal{F}$}\},
\end{equation*}
which is a slightly non-conforming subspace of $H(\div\div,\S)$.
The definitions of $\MCS^k_{\Gamma}$ and $\HHJ^k_{\Gamma}$ are done analogously to the definitions of $\RT^k_{\Gamma}$ and ${\NedII^k}_{,\Gamma}$ by forcing the tangential-normal and normal-normal trace to be zero on $\Gamma$, respectively.

Local shape functions and interelement continuity characterize the above spaces. Possible degrees of freedom can be found in the references above. The following observation shows that the constraint \eqref{eq:constraint} is also fulfilled in the discrete setting.
\begin{lemma}
	\label{lem:cont_curl}
 Let $u\in \{\vLag_{\Gamma}^k,\, {\NedII}_{,\Gamma}^k\}$. Then $\curl u\in \RT_{\Gamma}^{k-1}$.
\end{lemma}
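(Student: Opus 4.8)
The plan is to reduce both cases to a single facet-and-boundary computation and then check the three defining properties of $\RT^{k-1}_{\Gamma}$. First I would note that the two admissible spaces are nested: a field in $\vLag^k_{\Gamma}$ has continuous Cartesian components, hence continuous tangential components, so it lies in $\NedII^k_{,\Gamma}$, and its full trace vanishing on $\Gamma$ forces the tangential trace $u\times n$ to vanish there as well. Thus $\vLag^k_{\Gamma}\subset \NedII^k_{,\Gamma}$, and it suffices to treat $u\in \NedII^k_{,\Gamma}$.

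Next I would verify elementwise polynomiality. On each $T\in\mathcal{T}$ we have $u|_T\in[\mathcal{P}^k(T)]^3$, so $\curl(u|_T)\in[\mathcal{P}^{k-1}(T)]^3$, which is contained in the local Raviart--Thomas shape space $[\mathcal{P}^{k-1}(T)]^3\oplus x\,\tilde{\mathcal{P}}^{k-1}(T)$; no homogeneous correction term is needed. It then remains to show $H(\div)$-conformity, i.e.\ continuity of the normal component of $\curl u$ across interior facets, since for piecewise polynomials this continuity is exactly membership in $H(\div)$. Writing things in local coordinates on a facet $F$ whose unit normal is a coordinate direction, the normal component $\langle\curl u,n\rangle$ is expressed solely through tangential derivatives of the tangential components of $u$; tangential continuity of $u$ across $F$ makes these tangential components agree as functions on $F$, so their tangential derivatives agree and $\langle\curl u,n\rangle$ is single-valued on $F$. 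This gives $\curl u\in H(\div)$ (consistently, $\div\curl u=0$ elementwise).

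For the boundary condition I would run the same local expression on $\Gamma$: the hypothesis $u\times n=0$ on $\Gamma$ says the tangential components of $u$ vanish along $\Gamma$, hence so do their tangential derivatives, and therefore $\langle\curl u,n\rangle=0$ on $\Gamma$. Combining elementwise polynomiality, normal continuity across interior facets, and the vanishing normal trace on $\Gamma$ yields $\curl u\in \RT^{k-1}_{\Gamma}$.

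I expect the crux to be the normal-continuity step: one must make precise that $\langle\curl u,n\rangle$ sees only the tangential trace data of $u$, so that tangential (rather than full) continuity already kills the facet jump. The cleanest way is the intrinsic statement that the distributional curl of a piecewise-smooth field differs from its elementwise curl by a surface distribution supported on the facets and proportional to the jump $[\![u\times n]\!]$ of the tangential trace; this jump vanishes for $H(\curl)$-conforming $u$, which simultaneously delivers the interior continuity and, on $\Gamma$, the homogeneous normal trace. A shorter route for the N\'ed\'elec case alone is to quote the discrete de~Rham complex above after an index shift, giving $\curl\,\NedII^k\subset\RT^{k-1}$ directly; but the homogeneous trace spaces still force the boundary computation, so the trace argument is unavoidable in either approach.
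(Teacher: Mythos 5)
Your proposal is correct and follows essentially the same route as the paper's (one-sentence) proof: the paper simply notes that $\curl u$ is normal-continuous for Lagrangian and N\'ed\'elec elements and inherits the zero boundary condition, which is exactly the facet computation you carry out in detail. Your additional observations — the nesting $\vLag^k_{\Gamma}\subset\NedII^k_{,\Gamma}$, the elementwise containment of $[\mathcal{P}^{k-1}(T)]^3$ in the Raviart--Thomas shape space, and the fact that $\langle\curl u,n\rangle$ sees only the tangential trace of $u$ — are all accurate and merely make explicit what the paper leaves implicit.
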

\begin{proof}
	The claim
    follows by noting that $\curl u$ is normal continuous for Lagrangian and N\'ed\'elec elements and fulfills the zero boundary conditions.
\end{proof}

\subsection{Mass conserving mixed stress (MCS) formulation for rotational part}
\label{subsec:mcs_formulation}

For $u_h\in \vLag^k$ its curl is in $\RT^{k-1}$. 
Therefore, setting $\omega_h\in\RT^{k-1}$ enables us to satisfy the constraint $\omega_h=\half\curl u_h$ exactly, such that, as we will prove in Section~\ref{sec:stability_error}, 
our method will be locking-free for $\mu_c\to\infty$. 
In \cite{gopalakrishnan2020mass0} it was shown that for $\omega_h\in\RT^{k-1}$ and $\couplestress_h\in\MCS^{k-1}$  \eqref{eq:duality_curldiv} has the explicit mesh dependent form
\begin{flalign}
	\langle \grad \omega_h,\couplestress_h&\rangle_{H(\curl\div)^{\ast}}=\sum_{T\in\mathcal{T}}\Big(\int_T\langle\grad \omega_h,\couplestress_h\rangle\,dx-\int_{\partial T}\langle\omega_{h,t}, \couplestress_{h,nt}\rangle\,ds\Big)\nonumber\\
	&=-\sum_{T\in\mathcal{T}}\Big(\int_T \langle\omega_h,\div\couplestress_h\rangle\,dx-\int_{\partial T}\omega_{h,n} \couplestress_{h,nn}\,ds\Big) = -\langle \div\couplestress_h,\omega_h\rangle_{H(\div)^{\ast}}.\label{eq:duality_RT_MCS}
\end{flalign}
Above, $n$ is the outer unit normal vector of element $T$ and $\omega_{h,t}:= (I-n\otimes n)\omega_h$ denotes the projection onto the tangent space. 
We pose the \emph{MCS discretization for Cosserat elasticity} as follows.
\begin{problem}
	\label{prob:mcs_method}
Let $k\geq 1 $ be an integer. Find $(u_h,\omega_h,\couplestress_h)\in \vLag^k\times\RT^{k-1}\times\MCS^{k-1}$ such that $u_h=u_D$ and $\omega_{h,n}=\omega_{D,n}$ on $\Gamma_D$, $\couplestress_{h,t}=g_{\omega,t}$ on $\Gamma_N$, and
 for all $(v_h,\xi_h,\Psi_h)\in \vLag^k_{\Gamma_D}\times\RT^{k-1}_{\Gamma_D}\times\MCS^{k-1}_{\Gamma_N}$,
\begin{subequations}
	\label{eq:mcs_method}
	\begin{alignat}{3}
		&a(\couplestress_h,\Psi_h) &+& b(\Psi_h,(u_h,\omega_h))&=&g(\Psi_h),\\
		&b(\couplestress_h,(v_h,\xi_h))&-&c((u_h,\omega_h),(v_h,\xi_h)) &=&-f(v_h,\xi_h),
	\end{alignat}
\end{subequations}
where
\begin{subequations}
	\begin{flalign}
		\label{eq:def_mcs}
			&a(\couplestress,\Psi):=\int_{\Omega} \langle\couplestress,\Psi\rangle_{C_2^{-1}}\,dx,\qquad b(\Psi,(u,\omega)):=\langle\omega,\div\Psi\rangle_{H(\div)^{\ast}},\\
			&c((u,\omega),(v,\xi)) := \int_{\Omega}2\mu_c\left\langle\half\curl u-\omega,\half\curl v-\xi\right\rangle\,dx,\label{eq:def_mcs_c}\\
			&f(v,\xi):=\int_{\Omega} (\langle f_u, v\rangle+ \langle f_\omega, \xi\rangle)\,dx+ \int_{\Gamma_N}(\langle g_u, v\rangle+g_{\omega,n}\xi_{n})\,ds,\\
      & g(\Psi):=\int_{\Gamma_D}\langle\Psi_{nt},\omega_{D,t}\rangle\,ds.
	\end{flalign}
\end{subequations}
\end{problem}
The numerical analysis of Problem~\ref{prob:mcs_method} will follow from the (more involved) analysis of the method presented in the next section, cf. Corollary~\ref{cor:stability_convergence_mcs}. The Dirichlet and Neumann boundary conditions $\omega_D$, $g_{\omega}$ are treated  
as half essential and half natural boundary conditions. This comes from the shared regularity between the rotation $\omega_h$ and couple stress tensor $\couplestress_h$.  
The change of essential and natural boundary conditions between primal and mixed formulations is classical in mixed methods, see, e.g. \cite{boffi2013mixed}.

Problem~\ref{prob:mcs_method} has the form of a saddle-point problem, leading to an indefinite stiffness matrix after assembling. We can apply hybridization techniques to overcome this problem \cite{boffi2013mixed}. For the sake of simplicity  
we refer to the literature for details \cite{gopalakrishnan2020mass}.

\subsection{Tangential-displacement normal-normal-stress (TDNNS) formulation for displacement part}
\label{subsec:tdnns_formulation}
Due to Korn's inequality, the standard elasticity formulation is not robust with respect to anisotropic structures. Further, it is well-known that locking can occur in the nearly incompressible regime when $\lambda\to\infty$. The TDNNS method \cite{pechstein2011tangential} is robust in both cases. We will use it in addition to the previously introduced MCS method to discretize the elasticity part. To this end, the displacement fields are settled in the N\'ed\'elec space, $u_h\in\NedII^k$. There still holds  $\curl u_h\in \RT^{k-1}$, cf. Lemma~\ref{lem:cont_curl}, such that constraint \eqref{eq:constraint} holds when using $\omega_h\in\RT^{k-1}$. For $u_h\in\NedII^k$ and $\sigma_h\in \HHJ^k$  \eqref{eq:duality_divdiv} is well-defined and takes the explicit form \cite{pechstein2011tangential}, cf. \eqref{eq:duality_RT_MCS},
\begin{flalign*}
		\langle \div\sigma_h,u_h\rangle_{H(\curl)^{\ast}}=\sum_{T\in\mathcal{T}}\Big(\int_T \langle u_h,\div\sigma_h\rangle\,dx+\int_{\partial T}\langle u_{h,t}, \sigma_{h,nt}\rangle\,ds\Big).
\end{flalign*}
We define the \emph{TDNNS-MCS discretization method for Cosserat elasticity} as follows. 
\begin{problem}
	\label{prob:mcs_tdnns_method}
	Let $k\geq 1$ be an integer. Find $(u_h,\omega_h,\couplestress_h,\sigma_h)\in \NedII^k\times\RT^{k-1}\times\MCS^{k-1}\times\HHJ^k$ such that $u_{h,t}=u_{D,t}$, $\omega_{h,n}=\omega_{D,n}$ on $\Gamma_D$, $\couplestress_{h,nt}=g_{\omega,t}$, $\sigma_{h,nn}=g_{u,n}$ on $\Gamma_N$, and 
	for all $(v_h,\xi_h,\Psi_h,\Theta_h)\in {\NedII^k}_{,\Gamma_D} \times\RT^{k-1}_{\Gamma_D} \times\MCS^{k-1}_{\Gamma_N} \times\HHJ^k_{\Gamma_N}$,
\begin{subequations}
	\label{eq:mcs_tdnns_method}
	\begin{alignat}{3}
		&a((\couplestress_h,\sigma_h),(\Psi_h,\Theta_h)) &+& b((\Psi_h,\Theta_h),(u_h,\omega_h)) &=&g(\Psi_h,\Theta_h),\\
		&b((\couplestress_h,\sigma_h),(v_h,\xi_h))&-&c((u_h,\omega_h),(v_h,\xi_h))&=&-f(v_h,\xi_h),
	\end{alignat}
\end{subequations}
where $c(\cdot,\cdot)$ is defined as in \eqref{eq:def_mcs_c} and
	\begin{flalign*}
		a((\couplestress,\sigma),(\Psi,\Theta))&:=\int_{\Omega}\left( \langle\couplestress,\Psi\rangle_{C_2^{-1}}+\langle\sigma,\Theta\rangle_{\mathcal{C}^{-1}}\right)\,dx,\\
		b((\Psi,\Theta),(u,\omega))&:=\langle\omega,\div\Psi\rangle_{H(\div)^{\ast}}+\langle u,\div\Theta\rangle_{H(\curl)^{\ast}},\\
		f(v,\xi)&:=\int_{\Omega}( \langle f_u, v\rangle+ \langle f_\omega, \xi\rangle)\,dx+ \int_{\Gamma_N}(\langle g_{u,t}, v_{t}\rangle+g_{\omega,n}\xi_{n})\,ds,\\
  g(\Psi,\Theta)&:=\int_{\Gamma_D}\left(\langle\Psi_{nt},\omega_{D,t}\rangle+\Theta_{nn}u_{D,n}\right)\,ds.
	\end{flalign*}
\end{problem}
In \eqref{eq:mcs_tdnns_method}, additionally to $\omega_D$ and $g_{\omega}$, also $u_D$ and $g_{u}$ are treated half-half as essential and natural boundary conditions. The system in $(u_h,\sigma_h)$ has the form of a saddle-point problem, very similar to the TDNNS method. We can again use hybridization techniques to retain a symmetric and positive definite problem,
cf. \cite{boffi2013mixed, NPS2021,pechstein2011tangential,Sin2009}.

\section{Stability, error analysis, and post-processing}
\label{sec:stability_error}

For the numerical analysis, we assume throughout this section that homogeneous Dirichlet boundary conditions are prescribed on the whole boundary. We emphasize that the extension to Neumann and non-homogeneous boundary conditions is straightforward.

\subsection{Stability}
To show that \eqref{eq:mcs_tdnns_method} is well-posed and robust with respect to the Cosserat coupling constant $\mu_c$ we follow the strategy of \cite{PS17}. 
First, we add the unknown $\gamma:=2\mu_c(\half\curl u-\omega)\in H(\div)$, $\gamma_h=2\mu_c(\half\curl u_h-\omega_h)\in\RT^{k-1}$, which can be interpreted as a \emph{shear stress} quantity. Problem~\ref{prob:mcs_tdnns_method} is then equivalent to: 
\begin{problem}
	\label{prob:mcs_tdnns_method_gamma}
	Let $k\geq 1$ be an integer. Find $(u_h,\omega_h,\couplestress_h,\sigma_h,\gamma_h)\in {\NedII}_{,0}^k\times{\RT}_{0}^{k-1}\times\MCS^{k-1}\times\HHJ^k\times{\RT}_{0}^{k-1}$ such that for all $(v_h,\xi_h,\Psi_h,\Theta_h,\delta_h)\in {\NedII}_{,0}^k\times{\RT}_{0}^{k-1}\times{\MCS}^{k-1}\times{\HHJ}^k\times{\RT}_{0}^{k-1}$
	\begin{subequations}
		\label{eq:mcs_tdnns_method_gamma}
		\begin{alignat}{3}
		 	&a((\couplestress_h,\sigma_h,\gamma_h),(\Psi_h,\Theta_h,\delta_h)) &+& b((\Psi_h,\Theta_h,\delta_h),(u_h,\omega_h))  &=&0,\\
		 	&b((\couplestress_h,\sigma_h,\gamma_h),(v_h,\xi_h))&& &=&-f(v_h,\xi_h),
		 \end{alignat}
	\end{subequations}
	where $f$ is defined as in Problem~\ref{prob:mcs_tdnns_method} and
	\begin{subequations}
		\label{eq:def_mcs_tdnns_gamma}
		\begin{flalign}
			&a((\couplestress,\sigma,\gamma),(\Psi,\Theta,\delta)):=\int_{\Omega}\left( \langle\couplestress,\Psi\rangle_{C_2^{-1}}+ \langle\sigma,\Theta\rangle_{\mathcal{C}^{-1}}+ \frac{1}{2\mu_c}\langle\gamma,\delta\rangle\right)\,dx,\\
			b((\couplestress&,\sigma,\gamma),(u,\omega))\!:=\!\langle \div \sigma,u\rangle_{H(\curl)^{\ast}}\!+\!\langle \div\couplestress,\omega\rangle_{H(\div)^{\ast}}\!-\!\int_{\Omega}\left\langle \half\curl u\!-\!\omega,\gamma\right\rangle\,dx.
		\end{flalign}
	\end{subequations}
\end{problem}
Equivalence follows  
immediately, 
as we can eliminate $\gamma_h$ algebraically  
by our choice of finite element spaces. So the smaller system should be used for implementation.  
First, we prove well-posedness of the proposed discrete mixed problems. 
\begin{theorem}
	\label{thm:robustness_tdnns}
	Problem~\ref{prob:mcs_tdnns_method_gamma}, and therefore Problem~\ref{prob:mcs_tdnns_method}, is well-posed and with $\gamma_h=2\mu_c(\half\curl u_h-\omega_h)$ 
    the following stability estimate holds
	\begin{flalign}
		\begin{split}
      \label{eq:robust_estimate}
      		\|\couplestress_h\|_{L^2} + \|\sigma_h\|_{L^2} + \|\gamma_h\|_{\Gamma}+ \|u_h\|_{V_h} + \|\omega_h\|_{W_h}+\sqrt{\mu_c}&\left\|\half\curl u_h-\omega_h\right\|_{L^2} \\
          &\leq C (\|f_u\|_{L^2}+\|f_{\omega}\|_{L^2}),
    \end{split}
	\end{flalign}
	where $C>0$ is a constant independent of $\mu_c$ and the norms $\|\cdot\|_{\Gamma}$, $\|\cdot\|_{V_h}$, and $\|\cdot\|_{W_h}$ are given by, $[\![\cdot]\!]$ denoting the jump over facets,
	\begin{flalign*}
		\|u\|_{V_h}^2&= \sum_{T\in\mathcal{T}}\|\sym\grad u\|^2_{L^2(T)} + \frac{1}{h}\sum_{F\in\mathcal{F}}\|[\![u_{n}]\!]\|_{L^2(F)}^2,\qquad\qquad \|\gamma\|_{\Gamma}= \frac{1}{\sqrt{\mu_c}}\|\gamma\|_{L^2},\\
  \|\omega\|_{W_h}^2&= \sum_{T\in\mathcal{T}}\|\grad \omega\|^2_{L^2(T)} + \frac{1}{h}\sum_{F\in\mathcal{F}}\|[\![\omega_{t}]\!]\|_{L^2(F)}^2.
	\end{flalign*}
\end{theorem}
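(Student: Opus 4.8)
The plan is to cast \eqref{eq:mcs_tdnns_method_gamma} as a single saddle-point problem with the stress-like unknowns $x_h=(\couplestress_h,\sigma_h,\gamma_h)$ tested against the kinematic unknowns $y_h=(u_h,\omega_h)$, and to verify the four Brezzi conditions (continuity of $a$ and $b$, coercivity of $a$ on the kernel of $b$, and the inf-sup condition for $b$) with constants independent of $\mu_c$, following the strategy of \cite{PS17}. I would work in the parameter-weighted product norms $\|x_h\|_X^2:=\|\couplestress_h\|_{L^2}^2+\|\sigma_h\|_{L^2}^2+\|\gamma_h\|_{\Gamma}^2$ and $\|y_h\|_M^2:=\|u_h\|_{V_h}^2+\|\omega_h\|_{W_h}^2+\mu_c\|\half\curl u_h-\omega_h\|_{L^2}^2$, so that the scaled norm $\|\cdot\|_{\Gamma}$ and the last term of $\|\cdot\|_M$ carry exactly the $\mu_c$-dependence appearing in \eqref{eq:robust_estimate}. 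Once the four conditions hold with $\mu_c$-uniform constants, the abstract Brezzi theorem yields well-posedness and the estimate, and the equivalence with Problem~\ref{prob:mcs_tdnns_method} follows by eliminating $\gamma_h=2\mu_c(\half\curl u_h-\omega_h)$, which is admissible precisely because $\half\curl u_h-\omega_h\in{\RT}_0^{k-1}$ by Lemma~\ref{lem:cont_curl}.

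The continuity and coercivity of $a$ are the routine part. Since $\mathcal{C}^{-1}$ and $C_2^{-1}$ are symmetric positive definite, $a(x_h,x_h)=\|\couplestress_h\|_{C_2^{-1}}^2+\|\sigma_h\|_{\mathcal{C}^{-1}}^2+\tfrac{1}{2\mu_c}\|\gamma_h\|_{L^2}^2\gtrsim\|\couplestress_h\|_{L^2}^2+\|\sigma_h\|_{L^2}^2+\|\gamma_h\|_{\Gamma}^2=\|x_h\|_X^2$, where the hidden constant depends on the spectral bounds of $\mathcal{C}$ and $C_2$ (hence on $\lambda$, consistent with the statement, but \emph{not} on $\mu_c$) and the identity $\tfrac{1}{2\mu_c}\|\cdot\|_{L^2}^2=\tfrac12\|\cdot\|_{\Gamma}^2$ absorbs the coupling constant. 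Thus $a$ is coercive on all of $X_h$, a fortiori on the kernel of $b$, with a $\mu_c$-uniform constant; continuity of $a$ is the same computation with Cauchy--Schwarz. For the continuity of $b$ I would invoke the explicit element-wise representations of the two distributional pairings (as in \eqref{eq:duality_divdiv} and \eqref{eq:duality_RT_MCS}) and the associated trace and scaling estimates from \cite{pechstein2011tangential,gopalakrishnan2020mass0}, giving $|\langle\div\sigma_h,u_h\rangle_{H(\curl)^{\ast}}|\lesssim\|\sigma_h\|_{L^2}\|u_h\|_{V_h}$ and $|\langle\div\couplestress_h,\omega_h\rangle_{H(\div)^{\ast}}|\lesssim\|\couplestress_h\|_{L^2}\|\omega_h\|_{W_h}$; the remaining shear term is handled by Cauchy--Schwarz as $|\int_{\Omega}\langle\half\curl u_h-\omega_h,\gamma_h\rangle\,dx|\le\sqrt{\mu_c}\,\|\half\curl u_h-\omega_h\|_{L^2}\,\|\gamma_h\|_{\Gamma}$, whose two factors are controlled by $\|y_h\|_M$ and $\|x_h\|_X$, respectively.

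The inf-sup condition for $b$ is the core of the argument, and here the payoff of introducing $\gamma_h$ becomes visible: the three terms of $b$ decouple, so I can realize the three contributions of $\|y_h\|_M$ separately and superpose. Given $y_h=(u_h,\omega_h)$, I would (i) choose $\delta_h:=-\mu_c(\half\curl u_h-\omega_h)\in{\RT}_0^{k-1}$, an admissible test function by Lemma~\ref{lem:cont_curl}, producing $-\int_{\Omega}\langle\half\curl u_h-\omega_h,\delta_h\rangle\,dx=\mu_c\|\half\curl u_h-\omega_h\|_{L^2}^2$ with $\|\delta_h\|_{\Gamma}=\sqrt{\mu_c}\,\|\half\curl u_h-\omega_h\|_{L^2}$; (ii) borrow from the TDNNS analysis \cite{pechstein2011tangential} a stress $\Theta_h\in\HHJ^k$ with $\langle\div\Theta_h,u_h\rangle_{H(\curl)^{\ast}}\gtrsim\|u_h\|_{V_h}^2$ and $\|\Theta_h\|_{L^2}\lesssim\|u_h\|_{V_h}$; and (iii) borrow from the MCS analysis \cite{gopalakrishnan2020mass0} a couple stress $\Psi_h\in\MCS^{k-1}$ with $\langle\div\Psi_h,\omega_h\rangle_{H(\div)^{\ast}}\gtrsim\|\omega_h\|_{W_h}^2$ and $\|\Psi_h\|_{L^2}\lesssim\|\omega_h\|_{W_h}$. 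Because $\Theta_h$ pairs only with $u_h$, $\Psi_h$ only with $\omega_h$, and $\delta_h$ only through the shear term, the single test function $(\Psi_h,\Theta_h,\delta_h)$ gives $b((\Psi_h,\Theta_h,\delta_h),y_h)\gtrsim\|u_h\|_{V_h}^2+\|\omega_h\|_{W_h}^2+\mu_c\|\half\curl u_h-\omega_h\|_{L^2}^2=\|y_h\|_M^2$, while $\|(\Psi_h,\Theta_h,\delta_h)\|_X\lesssim\|y_h\|_M$, and the $\mu_c$-uniform inf-sup constant follows by subadditivity of the norm.

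I expect the genuine obstacle to be exactly these two building-block inf-sup conditions for the TDNNS and MCS pairings in the mesh-dependent norms $\|\cdot\|_{V_h}$ and $\|\cdot\|_{W_h}$ --- they encode a discrete Korn inequality for $u_h$ and the normal-normal/tangential-normal continuity bookkeeping behind the jump terms --- together with the companion discrete Korn and Poincar\'e inequalities $\|v_h\|_{L^2}\lesssim\|v_h\|_{V_h}$ and $\|\xi_h\|_{L^2}\lesssim\|\xi_h\|_{W_h}$ needed both for definiteness of the norms and to bound the right-hand side $f(v_h,\xi_h)\le(\|f_u\|_{L^2}+\|f_\omega\|_{L^2})(\|v_h\|_{L^2}+\|\xi_h\|_{L^2})\lesssim(\|f_u\|_{L^2}+\|f_\omega\|_{L^2})\|y_h\|_M$. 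Since all of these are available from \cite{pechstein2011tangential,gopalakrishnan2020mass0} with $\mu_c$-independent constants, the Brezzi theorem assembles them into \eqref{eq:robust_estimate}; the remaining delicate point --- ensuring that no constant secretly depends on $\mu_c$ --- is guaranteed by the weighted norms, which confine every occurrence of $\mu_c$ to the exactly matching factors $\|\gamma\|_{\Gamma}$ and $\sqrt{\mu_c}\,\|\half\curl u-\omega\|_{L^2}$.
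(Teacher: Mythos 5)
Your proposal is correct and follows essentially the same route as the paper's proof: Brezzi's theorem with the identical $\mu_c$-weighted norms on $X_h$ and $Y_h$, coercivity of $a$ on the whole space, continuity of $b$ via the cited TDNNS/MCS estimates, and an inf-sup argument that superposes the known TDNNS and MCS LBB test functions with a shear test function proportional to $\half\curl u_h-\omega_h$ (admissible by Lemma~\ref{lem:cont_curl}). The only differences are cosmetic --- you scale $\delta_h$ by $\mu_c$ rather than $\sqrt{\mu_c}$ and use a single combined test function instead of splitting the supremum, both of which yield the same $\mu_c$-uniform constant.
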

\begin{proof}
	We use Brezzi's theorem of saddle point problems \cite{brezzi1974existence}. 
 On the spaces
	\begin{flalign*}
		X_h:= \HHJ^k\times \MCS^{k-1}\times \RT^{k-1},\qquad Y_h:= \NedII^k\times \RT^{k-1}
	\end{flalign*}
	we define the following parameter depending norms
	\begin{flalign*}
		&\|(\couplestress,\sigma,\gamma)\|^2_{X_h} := \|\couplestress\|_{L^2}^2 + \|\sigma\|_{L^2}^2 + \|\gamma\|_{\Gamma}^2,\\
		& \|(u,\omega)\|^2_{Y_h} : = \|u\|_{V_h}^2 + \|\omega\|_{W_h}^2+\mu_c\|\half\curl u-\omega\|^2_{L^2}.
	\end{flalign*}
	We denote the lower and upper limit of the material tensors for $\sigma_h\in\HHJ^k$ and $\couplestress_h\in\MCS^{k-1}$ by
	\begin{flalign*}
		&\underline{c}_1\|\sigma_h\|^2\leq \|\sigma_h\|^2_{\mathcal{C}^{-1}}\leq \overline{c}_1\|\sigma_h\|^2,\qquad \underline{c}_2\|\couplestress_h\|^2\leq \|\couplestress_h\|^2_{C_2^{-1}}\leq \overline{c}_2\|\couplestress_h\|^2.
	\end{flalign*}
	The continuity of the right-hand side, $|f(v_h,\xi_h)|\leq (\|f_u\|_{L^2}+\|f_{\omega}\|_{L^2})\|(v_h,\xi_h)\|_{Y_h}$, is straightforward.
	Continuity of bilinear form $a(\cdot,\cdot):{X_h}\times {X_h}\to \R$ follows directly by the Cauchy-Schwarz inequality. For all $(\couplestress_h,\sigma_h,\gamma_h),(\Psi_h,\Theta_h,\delta_h)\in {X_h}$ there holds
	\begin{flalign*}
		|a((\couplestress_h,\sigma_h,\gamma_h),(\Psi_h,\Theta_h,\delta_h))|
		&\leq 3\max\{\overline{c}_1,\overline{c}_2,\half\}\|(\couplestress_h,\sigma_h,\gamma_h)\|_{X_h}\|(\Psi_h,\Theta_h,\delta_h)\|_{X_h}.
	\end{flalign*}
For the continuity of bilinear form $b(\cdot,\cdot):{X_h}\times {Y_h}\to \R$ we use that 
	\begin{flalign*}
		\langle\div\sigma_h,u_h\rangle_{H(\curl)^{\ast}}\leq \overline{c}_{\mathrm{T}}\|\sigma_h\|_{L^2}\|u_h\|_{V_h}\,  \text{ and } \,\langle\div\couplestress_h,\omega_h\rangle_{H(\div)^{\ast}}\leq \overline{c}_{\mathrm{M}}\|\couplestress_h\|_{L^2}\|\omega_h\|_{W_h}
	\end{flalign*} 
	have been proven in the analysis of the TDNNS \cite{pechstein2011tangential} and MCS \cite{gopalakrishnan2020mass0} methods. There holds for all $(\couplestress_h,\sigma_h,\gamma_h)\in {X_h}$ and $(u_h,\omega_h)\in {Y_h}$
	\begin{flalign*}
		|b((\couplestress_h,\sigma_h,\gamma_h),(u_h,\omega_h))|
		&\leq 3\max\{\overline{c}_{\mathrm{T}},\overline{c}_{\mathrm{M}},1\}\|(\couplestress_h,\sigma_h,\gamma_h)\|_{X_h}\|(u_h,\omega_h)\|_{Y_h}.
	\end{flalign*}
 Bilinear form $a(\cdot,\cdot)$ is coercive, i.e., there holds for all $(\couplestress_h,\sigma_h,\gamma_h)\in {X_h}$
	\begin{flalign*}
		 a((\couplestress_h,\sigma_h,\gamma_h),(\couplestress_h,\sigma_h,\gamma_h))
		 \geq \min\left\{ \underline{c}_1,\underline{c}_2,\half\right\}\|(\couplestress_h,\sigma_h,\gamma_h)\|_{X_h}^2.
	\end{flalign*}
    It remains to check the Ladyzhenskaya--Babu{\v{s}}ka--Brezzi (LBB) condition. Let an element $(u_h,\omega_h)\in {Y_h}$ be given. Choose $\tilde\gamma_h:=-\sqrt{\mu_c}(\half\curl u_h-\omega_h)$, and $\tilde\couplestress_h\in\MCS^{k-1}$ and $\tilde\sigma_h\in\HHJ^k$ such that, see \cite{pechstein2011tangential,gopalakrishnan2020mass0} for the TDNNS and MCS LBB conditions,
	\begin{flalign*}
		\langle \div\tilde\couplestress_h,\omega_h\rangle_{H(\div)^{\ast}}\geq \underline{c}_{\mathrm{M}} \|\tilde\couplestress_h\|_{L^2}\|\omega_h\|_{W_h},\quad \langle \div\tilde\sigma_h,u_h\rangle_{H(\curl)^{\ast}}\geq \underline{c}_{\mathrm{T}} \|\tilde\sigma_h\|_{L^2}\|u_h\|_{V_h}.
	\end{flalign*}
 We note that the choice of polynomial degree for the trace part of $\tilde \couplestress_h$ is crucial for the MCS LBB condition, which takes the role of the pressure here. Then, we have
	\begin{flalign*}
		&\sup\limits_{(\couplestress_h,\sigma_h,\gamma_h)\in {X_h}}\frac{b((\couplestress_h,\sigma_h,\gamma_h),(u_h,\omega_h))}{\|(\couplestress_h,\sigma_h,\gamma_h)\|_{X_h}} \geq \frac{1}{3}\left( \sup\limits_{\couplestress_h\in \MCS^{k-1}}\frac{b((\couplestress_h,0,0),(u_h,\omega_h))}{\|\couplestress_h\|_{L^2}}\right.\\
		&\qquad\qquad+\left.\sup\limits_{\sigma_h\in \HHJ^{k}}\frac{b((0,\sigma_h,0),(u_h,\omega_h))}{\|\sigma_h\|_{L^2}}+\sup\limits_{\gamma_h\in \RT^{k-1}}\frac{b((0,0,\gamma_h),(u_h,\omega_h))}{\|\gamma_h\|_{\Gamma}} \right)\\
		&\qquad\geq \frac{1}{3}\left( \frac{\langle \div\tilde\couplestress_h,\omega_h\rangle_{H(\div)^{\ast}}}{\|\tilde\couplestress_h\|_{L^2}}+\frac{\langle \div\tilde\sigma_h,u_h\rangle_{H(\curl)^{\ast}}}{\|\tilde\sigma_h\|_{L^2}}+\frac{\sqrt{\mu_c}\|\half\curl u_h-\omega_h\|_{L^2}^2}{\|\half\curl u_h-\omega_h\|_{L^2}}\right)\\
		&\qquad\geq \min\left\{ \nicefrac{1}{3},\underline{c}_{\mathrm{M}},\underline{c}_{\mathrm{T}} \right\}\|(u_h,\omega_h)\|_{Y_h}.
	\end{flalign*}
	By Brezzi's theorem, Problem~\ref{prob:mcs_tdnns_method_gamma} (and therefore Problem~\ref{prob:mcs_tdnns_method}) is well-posed and stability estimate \eqref{eq:robust_estimate} holds.  
    Since all involved constants $\underline{c}_1,\underline{c}_2,\overline{c}_1,\overline{c}_2,\underline{c}_{\mathrm{M}},\underline{c}_{\mathrm{T}}, \overline{c}_{\mathrm{M}}$, and $\overline{c}_{\mathrm{T}}$ are independent of $\mu_c$ the constant $C$ in \eqref{eq:robust_estimate} is also independent of $\mu_c$.
\end{proof}
Some comments are in place. By estimate \eqref{eq:robust_estimate} we have shown a robust a-priori estimate with respect to the Cosserat coupling constant $\mu_c$. Further, we obtain that the rotation field $\omega_h$ converges to the curl of the displacement $u_h$ as $\mu_c\to\infty$ 
\begin{flalign*}
	\left\|\half\curl u_h-\omega_h\right\|_{L^2}\leq \nicefrac{C}{\sqrt{\mu_c}},\qquad C\neq C(\mu_c).
\end{flalign*}

\begin{remark}[Robustness with respect to anisotropic structures and nearly incompressibility]
	\label{rem:robustness_anisotropy_incompressibility}
	Throughout the proof, including the cited inequalities, we never used Korn's inequality, where the resulting constant would depend on the aspect ratio of the domain. This indicates that the discrete methods will be free of shear locking. Nevertheless, a detailed convergence analysis would require hexahedral elements and anisotropic estimates, which are out of the scope of this paper. We refer to \cite{PS12} for an anisotropic analysis for the TDNNS method. In the presented proof, the Lam\'e constant $\lambda$ enters the coercivity constant of $a(\cdot,\cdot)$. With the stabilization techniques presented in \cite{Sin2009}, one can 
    adapt the proof to obtain a constant independent of $\lambda$, i.e. robustness in the nearly incompressible limit $\lambda\to\infty$.
\end{remark}

Before we can prove convergence, we need the following consistency result. 
\begin{theorem}[Consistency \& Galerkin orthogonality]
	\label{thm:consistency}
	Assume the exact solution of Cosserat problem \eqref{eq:variation_cosserat} fulfills the following regularity assumptions
	\[
		 u\in [H^1(\Omega)]^3,\quad \omega\in [H^1(\Omega)]^3, \quad \sigma \in [H^1(\Omega)]^{3\times 3},\quad \couplestress\in [H^1(\Omega)]^{3\times 3}.
	\] 
	Problems~\ref{prob:mcs_method}, \ref{prob:mcs_tdnns_method}, and \ref{prob:mcs_tdnns_method_gamma} are consistent. For example, for Problem~\ref{prob:mcs_tdnns_method_gamma}
    we have for all $(u_h,\omega_h,\couplestress_h,\sigma_h,\gamma_h)\in {\NedII}_{,0}^k\times\RT^{k-1}_{0}\times\MCS^{k-1}\times\HHJ^k\times\RT^{k-1}$
	\begin{flalign*}
		a((\couplestress,\sigma,\gamma),(\couplestress_h,\sigma_h,\gamma_h))+b((\couplestress_h,\sigma_h,\gamma_h),(u,\omega))\!+\!b((\couplestress,\sigma,\gamma),(u_h,\omega_h))\!=\!-f(u_h,\omega_h),
	\end{flalign*} 
	where $a$, $b$, and $f$ 
 are defined as in \eqref{eq:def_mcs_tdnns_gamma}.
	Further, 
    the following Galerkin ortho\-gonality is satisfied. Let $(u_h,\omega_h,\couplestress_h,\sigma_h,\gamma_h)\in \NedII^k\times\RT^{k-1}\times\MCS^{k-1}\times\HHJ^k\times\RT^{k-1}$ be the solution of Problem~\ref{prob:mcs_tdnns_method_gamma}. Then for all $(v_h,\xi_h,\Psi_h,\Theta_h,\delta_h)\in {\NedII}_{,0}^k\times\RT_{0}^{k-1}\times\MCS^{k-1}\times\HHJ^k\times\RT^{k-1}$ there holds
	\begin{flalign}
		\label{eq:galerkin_orthogonality}
\begin{split}
			a((\couplestress-\couplestress_h,\sigma-\sigma_h,\gamma-\gamma_h)&,(\Psi_h,\Theta_h,\delta_h))+b((\Psi_h,\Theta_h,\delta_h),(u-u_h,\omega-\omega_h)) \\
			&+ b((\couplestress-\couplestress_h,\sigma-\sigma_h,\gamma-\gamma_h),(v_h,\xi_h))=0.
\end{split}
	\end{flalign}
\end{theorem}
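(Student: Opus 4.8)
The plan is to establish consistency by checking that the smooth exact solution satisfies each of the two rows of the discrete saddle-point system \eqref{eq:mcs_tdnns_method_gamma} separately, and then to recover the displayed combined identity and the Galerkin orthogonality \eqref{eq:galerkin_orthogonality} by purely algebraic manipulation. Writing the two consistency statements as (A) $a((\couplestress,\sigma,\gamma),(\Psi_h,\Theta_h,\delta_h))+b((\Psi_h,\Theta_h,\delta_h),(u,\omega))=0$ and (B) $b((\couplestress,\sigma,\gamma),(v_h,\xi_h))=-f(v_h,\xi_h)$, each holding for all discrete test functions, the displayed identity is precisely (A) tested with $(\couplestress_h,\sigma_h,\gamma_h)$ added to (B) tested with $(u_h,\omega_h)$; and \eqref{eq:galerkin_orthogonality} follows by subtracting the two rows of Problem~\ref{prob:mcs_tdnns_method_gamma} from (A) and (B) and using bilinearity. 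So the real work is to prove (A) and (B).

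For (A) I would first insert the constitutive laws $\couplestress=C_2(\grad\omega)$, $\sigma=\mathcal{C}(\sym\grad u)$, and $\gamma=2\mu_c(\half\curl u-\omega)$ into $a(\cdot,\cdot)$. Because $C_2^{-1}\couplestress=\grad\omega$, $\mathcal{C}^{-1}\sigma=\sym\grad u$ (which pairs with the symmetric $\Theta_h$ as $\grad u$), and $\frac{1}{2\mu_c}\gamma=\half\curl u-\omega$, the form $a((\couplestress,\sigma,\gamma),(\Psi_h,\Theta_h,\delta_h))$ collapses to $\int_\Omega(\langle\grad\omega,\Psi_h\rangle+\langle\grad u,\Theta_h\rangle+\langle\half\curl u-\omega,\delta_h\rangle)\,dx$. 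The $\delta_h$-contribution cancels the shear term $-\int_\Omega\langle\half\curl u-\omega,\delta_h\rangle\,dx$ appearing in $b$. It then remains to show that the two distributional pairings in $b$ reduce to $\langle\div\Theta_h,u\rangle_{H(\curl)^*}=-\int_\Omega\langle\grad u,\Theta_h\rangle\,dx$ and $\langle\div\Psi_h,\omega\rangle_{H(\div)^*}=-\int_\Omega\langle\grad\omega,\Psi_h\rangle\,dx$, which cancel the two surviving volume terms and give (A).

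I would prove these two reductions from the explicit mesh-dependent representation \eqref{eq:duality_RT_MCS} (and its HHJ/TDNNS analogue), now evaluated with the smooth exact field in place of the discrete one. Element-wise integration by parts produces, besides the desired volume term, only facet integrals: for the MCS pairing these are $\sum_{T}\int_{\partial T}\langle\omega_t,\Psi_{h,nt}\rangle\,ds$, and for the HHJ pairing $\sum_{T}\int_{\partial T} u_n\,\Theta_{h,nn}\,ds$. On each interior facet the exact field is single-valued while the outward normal flips sign, so that, combined with the tangential-normal continuity of $\MCS^{k-1}$ (respectively the normal-normal continuity of $\HHJ^k$), the two adjacent element contributions are equal and opposite and cancel. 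On boundary facets the homogeneous Dirichlet data of the exact solution ($u=0$ and $\omega=0$ on $\partial\Omega$) annihilates the integrand. This facet cancellation---matching the interelement continuity of the non-conforming stress spaces to the trace regularity of the exact solution---is the only genuinely technical step and the expected main obstacle; everything else is bookkeeping.

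For (B) I would instead invoke the strong form \eqref{eq:strong_cosserat}, which under the stated $H^1$-regularity holds and reads, in terms of $\gamma$, as $-\div\sigma+\half\curl\gamma=f_u$ and $-\div\couplestress-\gamma=f_\omega$. Here the test functions $v_h\in\NedII^k\subset H(\curl)$ and $\xi_h\in\RT^{k-1}\subset H(\div)$ are conforming and $\sigma,\couplestress$ are smooth, so the pairings in $b((\couplestress,\sigma,\gamma),(v_h,\xi_h))$ are ordinary integrals $\int_\Omega\langle\div\sigma,v_h\rangle\,dx$ and $\int_\Omega\langle\div\couplestress,\xi_h\rangle\,dx$. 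Substituting the strong form and integrating by parts once in the curl term, with $\int_\Omega\langle\half\curl\gamma,v_h\rangle\,dx=\int_\Omega\langle\gamma,\half\curl v_h\rangle\,dx$ (the boundary term vanishing since $v_{h,t}=0$ on $\partial\Omega$), the $\gamma$-terms cancel against the $-\int_\Omega\langle\half\curl v_h-\xi_h,\gamma\rangle\,dx$ contribution of $b$, leaving exactly $-\int_\Omega(\langle f_u,v_h\rangle+\langle f_\omega,\xi_h\rangle)\,dx=-f(v_h,\xi_h)$, which is (B). Assembling (A) and (B) then yields the stated consistency identity, and the subtraction described above gives \eqref{eq:galerkin_orthogonality}; the same argument applies verbatim to Problems~\ref{prob:mcs_method} and \ref{prob:mcs_tdnns_method} after the algebraic elimination of $\gamma$.
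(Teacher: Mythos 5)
Your proposal is correct and follows essentially the same route as the paper: reduce the distributional pairings to $L^2$ integrals via the matching of interelement continuity of $\MCS^{k-1}$/$\HHJ^k$ (and of the N\'ed\'elec/Raviart--Thomas test functions) with the trace regularity and homogeneous boundary data of the exact solution, insert the constitutive laws into $a$, invoke the strong form \eqref{eq:strong_cosserat} with one integration by parts on the $\curl\gamma$ term, and obtain \eqref{eq:galerkin_orthogonality} by subtracting the discrete equations. The only difference is organizational -- you verify the two rows (A) and (B) separately and then assemble, whereas the paper computes the combined identity in one pass -- which does not change the substance of the argument.
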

\begin{proof}
	We show consistency with respect to Problem~\ref{prob:mcs_tdnns_method_gamma}. The other cases are shown analogously (see also Corollary~\ref{cor:stability_convergence_mcs} for the required 
    adaptations for Problem~\ref{prob:mcs_method}).  Due to the regularity assumptions, there holds $[\![\couplestress_{nn}]\!]=[\![\sigma_{nt}]\!]=[\![u_{n}]\!]=[\![\omega_{t}]\!]=0$, and the duality pairings reduce to $L^2$ integrals, e.g.,
	\begin{flalign*}
		\langle \omega_h,\div \couplestress\rangle_{H(\div)^{\ast}} &\!=\! \sum_{T\in\mathcal{T}}\int_T \langle \omega_h,\div\couplestress\rangle\,dx \!-\! \sum_{F\in\mathcal{F}}\int_F\omega_{h,n}[\![\couplestress_{nn}]\!]\,ds \!=\! \int_{\Omega} \langle \omega_h,\div \couplestress\rangle\,dx,\\
		\langle \omega,\div \couplestress_h\rangle_{H(\div)^{\ast}}&\!=\! -\sum_{T\in\mathcal{T}}\int_T \langle \grad \omega,\couplestress_h\rangle dx \!+\! \sum_{F\in\mathcal{F}}\int_F\langle\couplestress_{h,nt},[\![\omega_{t}]\!]\rangle ds\!=\! -\int_{\Omega} \langle \grad \omega,\couplestress_h\rangle dx.
	\end{flalign*} 
 Using that for the exact solution 
 $\gamma = 2\mu_c(\half\curl u - \omega),$ and with integration by parts we get
 \begin{flalign*}
 -\int_{\Omega}\langle \half\curl u_h-\omega_h&,\gamma\rangle\,dx = -2\mu_c\int_{\Omega}\left\langle \half\curl u_h-\omega_h,\half\curl u-\omega\right\rangle\,dx \\
 &\!= 2\mu_c\int_{\Omega}\left(\left\langle \half\curl u-\omega,\omega_h\right\rangle-\half\left\langle\curl \left(\half\curl u-\omega\right),u_h\right\rangle\right)\,dx.
 \end{flalign*}
Using that 
for the exact solution $\couplestress = C_2( \grad \omega)$, $\sigma = \mathcal{C}(\grad u)$, and $\gamma = 2\mu_c(\half\curl u - \omega)$, we obtain together with the strong form \eqref{eq:strong_cosserat} of Cosserat elasticity
	\begin{flalign*}
		&a((\couplestress,\sigma,\gamma),(\couplestress_h,\sigma_h,\gamma_h))+b((\couplestress_h,\sigma_h,\gamma_h),(u,\omega))+b((\couplestress,\sigma,\gamma),(u_h,\omega_h))\\
  &\!=\! \int_{\Omega}\big(\langle \couplestress,\couplestress_h\rangle_{C^{-1}_2}\!+\!\langle \sigma,\sigma_h\rangle_{\mathcal{C}^{-1}}\!+\!\frac{1}{2\mu_c}\langle \gamma,\gamma_h\rangle\!+\!\langle\div\sigma,u_h\rangle \!+\! \langle \div\couplestress,\omega_h\rangle-\langle\sigma_h,\grad u\rangle\\
		&-\!\langle\couplestress_h,\grad \omega\rangle \!-\! \left\langle \half\curl u\!-\!\omega,\gamma_h\right\rangle\!+\!\mu_c\left\langle \curl u\!-\!2\omega,\omega_h\right\rangle\!-\!\mu_c\left\langle\curl \left(\half\curl u\!-\!\omega\right),u_h\right\rangle\big)dx\\
		&\!=\! \int_{\Omega}(\langle\div\sigma,u_h\rangle \!+\! \langle \div\couplestress,\omega_h\rangle\!+\!2\mu_c\left\langle \half\curl u\!-\!\omega,\omega_h\right\rangle\!-\!\mu_c\left\langle\curl \left(\half\curl u\!-\!\omega\right),u_h\right\rangle)\,dx  \\
    &= -f(u_h,\omega_h).
	\end{flalign*}
	We immediately 
    obtain the Galerkin orthogonality \eqref{eq:galerkin_orthogonality} by subtracting the equations.
\end{proof}

\subsection{Error analysis}
Before presenting the proof of convergence, we recall the definition and properties of the interpolation operators and norms used in the analysis of the TDNNS and MCS methods.
\begin{definition}[Canonical interpolation operators]
\label{def:canonical_interp}
We denote with $\mathcal{I}^{\NedII,k}:C(\Omega,\R^3)\to \NedII^k$, $\mathcal{I}^{\RT,k}:C(\Omega,\R^3)\to \RT^k$, $\mathcal{I}^{\MCS,k}:C(\Omega,\R^{3\times 3})\to \MCS^k$, and $\mathcal{I}^{\HHJ,k}:C(\Omega,\R^{3\times 3}_{\sym})\to \HHJ^k$ the canonical interpolation operators. $\mathcal{I}^{\RT,k}$ is defined by the following equations on $T\in\mathcal{T}$ and $F\in \mathcal{F}$
\begin{flalign*}
&\int_F(\mathcal{I}^{\RT,k}u-u)_n q\,ds = 0,\,\,  q\in \mathcal{P}^{k}(F),\quad \int_T\langle\mathcal{I}^{\RT,k}u-u, q\rangle\,dx = 0,\,\,  q\in [\mathcal{P}^{k-1}(T)]^3.
\end{flalign*}
For the definition of the other interpolation operators see, e.g., \cite{gopalakrishnan2020mass, pechstein2011tangential,Neun21}.
\end{definition}
\begin{lemma}
\label{lem:interpolation_ops_approximation}
	Let $\mathcal{I}^{\NedII,k}$, $\mathcal{I}^{\RT,k}$, $\mathcal{I}^{\MCS,k}$, and $\mathcal{I}^{\HHJ,k}$ be the canonical interpolation operators from Definition~\ref{def:canonical_interp}, and $1\leq l\leq k$. Assume that $u,\omega\in [H^1(\Omega)]^3\cap [H^{l+1}(\mathcal{T})]^3$, $\couplestress\in \{\tau \in [H^{l+1}(\mathcal{T})]^{3\times 3}\,:\, [\![\tau_{nt}]\!]=0\}$, and  $\sigma\in \{\tau \in [H^{l+1}(\mathcal{T})]^{3\times 3}_{\sym}\,:\, [\![\tau_{nn}]\!]=0\}$, where $H^{l+1}(\mathcal{T})=\Pi_{T\in\mathcal{T}}H^{l+1}(T)$. Then there hold for a constant $C>0$ independent of the mesh-size $h$ the following estimates
	\begin{flalign*}
			&\|u-\mathcal{I}^{\NedII,k}u\|_{V_h} \leq C h^l \|u\|_{H^{l+1}(\mathcal{T})}, \qquad\qquad \|\omega-\mathcal{I}^{\RT,k}\omega\|_{W_h} \leq C h^l \|\omega\|_{H^{l+1}(\mathcal{T})},\\
   &\|\sigma-\mathcal{I}^{\HHJ,k}\sigma\|_{L^2}+\sqrt{\sum_{F\in\mathcal{F}}h\|(\sigma-\mathcal{I}^{\HHJ,k}\sigma)_{nn}\|^2_{L^2(F)}}\leq Ch^{l+1}\|\sigma\|_{H^{l+1}(\mathcal{T})},\\
			& \|\couplestress-\mathcal{I}^{\MCS,k}\couplestress\|_{L^2}+\sqrt{\sum_{F\in\mathcal{F}}h\|(\couplestress-\mathcal{I}^{\MCS,k}\couplestress)_{nt}\|^2_{L^2(F)}}\leq Ch^{l+1}\|\couplestress\|_{H^{l+1}(\mathcal{T})},
		\end{flalign*}
where $\|\cdot\|_{V_h}$ and $\|\cdot\|_{W_h}$ are defined as in Theorem~\ref{thm:robustness_tdnns}.
\end{lemma}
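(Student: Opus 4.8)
The plan is to reduce each of the four estimates to the reference element $\hat T$ by an affine scaling argument and then to invoke the Bramble--Hilbert lemma. For each interpolant I would need two facts on $\hat T$: first, that it is bounded from $[H^{l+1}(\hat T)]^{3}$ (respectively $[H^{l+1}(\hat T)]^{3\times 3}$) into the corresponding polynomial space and reproduces the full polynomial space of degree $k$ (hence of degree $l$, since $l\le k$); second, that it commutes with the appropriate Piola-type pullback --- the covariant (tangential-trace preserving) transform for $\mathcal{I}^{\NedII,k}$, the contravariant (normal-trace preserving) transform for $\mathcal{I}^{\RT,k}$, and the matrix-valued transforms preserving $nn$- resp. $nt$-continuity for $\mathcal{I}^{\HHJ,k}$ and $\mathcal{I}^{\MCS,k}$. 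Both facts are available from the analyses of the TDNNS and MCS elements cited in Definition~\ref{def:canonical_interp}; I would recall them and treat the four estimates in two groups according to the order of the target norm.

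For the \emph{volume contributions} I would map $T$ to $\hat T$ via $F_T(\hat x)=A_T\hat x+b_T$ with $\|A_T\|\sim h$ and $\|A_T^{-1}\|\sim h^{-1}$ by shape regularity. After pulling back, Bramble--Hilbert gives on $\hat T$
\begin{flalign*}
\|\hat v-\hat{\mathcal{I}}\hat v\|_{H^{1}(\hat T)}\le C\,|\hat v|_{H^{l+1}(\hat T)},\qquad \|\hat\tau-\hat{\mathcal{I}}\hat\tau\|_{L^{2}(\hat T)}\le C\,|\hat\tau|_{H^{l+1}(\hat T)},
\end{flalign*}
for the vector and matrix cases respectively. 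Scaling back and summing over $T$ yields $\sum_T\|\grad(u-\mathcal{I}^{\NedII,k}u)\|_{L^2(T)}^2\le C h^{2l}\|u\|_{H^{l+1}(\mathcal{T})}^2$ (which dominates the $\sym\grad$ part of $\|\cdot\|_{V_h}$) and the analogous $h^{l}$ bound for $\mathcal{I}^{\RT,k}\omega$ in the broken $H^1$-seminorm entering $\|\cdot\|_{W_h}$, while the matrix estimates produce the $L^2$ parts of the HHJ and MCS bounds at rate $h^{l+1}$. The extra power of $h$ for the stress estimates is exactly the difference between an $L^2$- and an $H^1$-type target norm under the scaling.

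For the \emph{facet contributions} I would use the scaled trace inequality $\|w\|_{L^2(\partial T)}^2\le C(h^{-1}\|w\|_{L^2(T)}^2+h\,|w|_{H^1(T)}^2)$. For the $nn$- and $nt$-facet terms I apply it to $w=(\sigma-\mathcal{I}^{\HHJ,k}\sigma)_{nn}$ and $w=(\couplestress-\mathcal{I}^{\MCS,k}\couplestress)_{nt}$ and combine with the volume bounds, so that the weight $h$ in $\sqrt{\sum_F h\|\cdot\|_{L^2(F)}^2}$ restores the $h^{l+1}$ rate. For the jump terms in $\|\cdot\|_{V_h}$ and $\|\cdot\|_{W_h}$ I would exploit that $u,\omega\in[H^1(\Omega)]^3$ are conforming, so $[\![u_n]\!]=[\![\omega_t]\!]=0$; on a shared facet $F=\partial T_+\cap\partial T_-$ this lets me rewrite
\begin{flalign*}
[\![(u-\mathcal{I}^{\NedII,k}u)_n]\!]=(u-\mathcal{I}^{\NedII,k}u)_n|_{T_-}-(u-\mathcal{I}^{\NedII,k}u)_n|_{T_+},
\end{flalign*}
a difference of one-sided traces of the interpolation error, and similarly for $[\![\omega_t]\!]$. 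Applying the scaled trace inequality to each one-sided trace and using $\|u-\mathcal{I}^{\NedII,k}u\|_{L^2(T)}\le Ch^{l+1}\|u\|_{H^{l+1}(T)}$ gives $\tfrac1h\|[\![(u-\mathcal{I}^{\NedII,k}u)_n]\!]\|_{L^2(F)}^2\le C(h^{-2}\|u-\mathcal{I}^{\NedII,k}u\|_{L^2(T)}^2+|u-\mathcal{I}^{\NedII,k}u|_{H^1(T)}^2)\le Ch^{2l}\|u\|_{H^{l+1}(T)}^2$; the $\tfrac1h$ weight is precisely what makes the two contributions balance at rate $h^l$.

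The step I expect to be the main obstacle is verifying the reference-element boundedness of the interpolants on $H^{l+1}$, since their degrees of freedom involve facet moments of normal/tangential ($nn$, $nt$) components that are not controlled for generic $L^2$ fields; establishing it requires the trace theorem on $\hat T$ together with the precise choice of facet and interior moments, and is where the non-conformity (tangential-normal continuity for MCS, normal-normal continuity for HHJ) genuinely enters. A secondary technical point is the extension to the polynomially curved elements allowed in the mesh: there the pullback $F_T$ is no longer affine, so the commuting property and the scaling constants must be controlled through the regularity of $F_T$ and the near-identity behaviour of $DF_T$ on shape-regular curved elements, as in the cited references. Once boundedness and polynomial reproduction are in hand, the remaining computations are the routine scaling bookkeeping sketched above.
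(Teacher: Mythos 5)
Your outline is correct and is essentially the argument behind the paper's proof, which consists only of a citation to the interpolation results in the TDNNS and MCS literature (Sinwel's thesis, Thms.~4.21/4.23, and Gopalakrishnan--Lederer--Sch\"oberl, Thm.~5.8 and (6.13)): affine/Piola scaling to the reference element, Bramble--Hilbert with polynomial reproduction, and scaled trace inequalities for the facet and jump terms, with the weights $h$ and $1/h$ balancing exactly as you compute. The boundedness concern you flag is resolved by the hypothesis $l\ge 1$, since $H^{l+1}(\hat T)\hookrightarrow C^0(\overline{\hat T})$ in three dimensions makes all facet and edge moments of the canonical interpolants well defined and continuous, which is precisely how the cited references handle it.
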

\begin{proof}
	For a proof we refer to \cite[Theorem 4.21 and Theorem 4.23]{Sin2009} and \cite[Theorem~5.8, (6.13) ]{gopalakrishnan2020mass0} (noting the slightly different definition of the $\MCS$ space \eqref{eq:mcs_fes}).
\end{proof}

\begin{lemma}
\label{lem:orthogonality_RT_int}
There holds for $u\in [H^1(\Omega)]^3$ the following orthogonality property
\begin{flalign*}
\langle \div\tau_h, u-\mathcal{I}^{\RT,k}u\rangle_{H(\div)^{\ast}}=0,\qquad \forall \tau_h\in \MCS^k.
\end{flalign*}
\end{lemma}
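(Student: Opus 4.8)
The plan is to replace the abstract duality pairing by its explicit element-wise representation and then read off the result directly from the defining moments of the Raviart--Thomas interpolation. Set $w:=u-\mathcal{I}^{\RT,k}u$. Since $u\in[H^1(\Omega)]^3$ and $\mathcal{I}^{\RT,k}u\in\RT^k\subset H(\div)$, the field $w$ is piecewise $H^1$ and has single-valued normal trace across facets, so the mesh-dependent identity underlying \eqref{eq:duality_RT_MCS} (element-wise integration by parts) applies with $w$ in place of a finite element function:
\[
\langle \div\tau_h, w\rangle_{H(\div)^\ast} = \sum_{T\in\mathcal{T}}\Big(\int_T\langle w,\div\tau_h\rangle\,dx - \int_{\partial T}w_n\,\tau_{h,nn}\,ds\Big),\qquad \tau_h\in\MCS^k.
\]
It then suffices to show that both sums vanish.

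First I would dispatch the volume term. Because $\tau_h|_T\in[\mathcal{P}^k(T)]^{3\times 3}$ and the divergence acts row-wise, $\div\tau_h|_T\in[\mathcal{P}^{k-1}(T)]^3$; choosing this as the test polynomial in the volume degrees of freedom of $\mathcal{I}^{\RT,k}$ (Definition~\ref{def:canonical_interp}), which enforce $\int_T\langle w,q\rangle\,dx=0$ for all $q\in[\mathcal{P}^{k-1}(T)]^3$, makes every volume integral vanish. Next I would regroup the boundary term facet by facet. On an interior facet $F=\partial T_1\cap\partial T_2$ the two outer normals are opposite; using that $w_n$ is single-valued, a short sign computation fuses the two elemental contributions into $\int_F w_n\,[\![\tau_{h,nn}]\!]\,ds$, while a boundary facet contributes a single $\int_F w_n\,\tau_{h,nn}\,ds$. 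In either case the tested quantity lies in $\mathcal{P}^k(F)$ (it is built from $\tau_h|_{T_i}\in[\mathcal{P}^k(T_i)]^{3\times 3}$), and the facet degrees of freedom of $\mathcal{I}^{\RT,k}$, which enforce $\int_F w_n\,q\,ds=0$ for all $q\in\mathcal{P}^k(F)$, annihilate it. Summing, the whole pairing is zero.

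The crux, and the step needing the most care, is the exact degree matching coupled with the sign bookkeeping on interior facets: the argument closes precisely because the normal--normal jump of a degree-$k$ $\MCS$ field lands in $\mathcal{P}^k(F)$, which is exactly the polynomial space tested by the facet moments of $\mathcal{I}^{\RT,k}$, and because the normal continuity of $w$ is what lets the two traces across $F$ fuse into one jump term rather than two uncoupled contributions. A secondary point to justify is that the explicit pairing formula is legitimate for the non-polynomial field $w$; this is guaranteed by the piecewise-$H^1$ regularity of $w$ together with the continuity of its normal trace, which keep the element-wise integration by parts behind \eqref{eq:duality_RT_MCS} valid.
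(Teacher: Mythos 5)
Your proposal is correct and follows essentially the same route as the paper: write out the duality pairing in its element-wise form, kill the volume terms with the interior moments of $\mathcal{I}^{\RT,k}$ (since $\div\tau_h|_T\in[\mathcal{P}^{k-1}(T)]^3$), and kill the facet terms with the facet moments (since $\tau_{h,nn}|_F\in\mathcal{P}^k(F)$). The paper states this more tersely, while you additionally spell out the single-valuedness of $(u-\mathcal{I}^{\RT,k}u)_n$ and the facet-wise sign bookkeeping, which is a welcome but not essentially different elaboration.
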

\begin{proof}
Writing out the duality pairing \eqref{eq:duality_RT_MCS} and noting that $\div \!\tau_h|_T\!\in\! [\mathcal{P}^{k-1}(T)]^3$ and $\tau_{h,nn}\in \mathcal{P}^k(F)$ the claim follows by the definition of the RT interpolation operator
\begin{flalign*}
&\langle \div\tau_h, u-\mathcal{I}^{\RT,k}u\rangle_{H(\div)^{\ast}} \\
&\qquad= \sum_{T\in\mathcal{T}}\int_T \langle \div \tau_h,u-\mathcal{I}^{\RT,k}u\rangle\,dx - \sum_{F\in\mathcal{F}}\int_F\tau_{h,nn}(u-\mathcal{I}^{\RT,k}u)_n\,ds = 0.
\end{flalign*}
\end{proof}

\begin{lemma}
	\label{lem:equivalency_norms}
	Let $k\geq 1$ be an integer, $\sigma_h\in\HHJ^{k}$, and $\couplestress_h\in \MCS^{k}$. Then, there hold the following norm equivalencies, $a\simeq b$ if $a\leq C\,b$ and $b\leq C\,a$ for a constant $C>0$ independent of the mesh-size,
	\begin{flalign*}
			&\|\sigma_h\|_{L^2}^2\simeq \|\sigma_h\|_{L^2}^2 + h\sum_{F\in\mathcal{F}}\|\sigma_{h,nn}\|^2_{L^2(F)},\quad \|\couplestress_h\|_{L^2}^2\simeq \|\couplestress_h\|_{L^2}^2 + h\sum_{F\in\mathcal{F}}\|\couplestress_{h,nt}\|^2_{L^2(F)} .
	\end{flalign*}
\end{lemma}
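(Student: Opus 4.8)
The plan is to prove both equivalences by the same argument, so I would focus on the first one for $\sigma_h \in \HHJ^k$; the proof for $\couplestress_h \in \MCS^k$ is identical after replacing the normal-normal component $\sigma_{h,nn} = \langle \sigma_h n_F, n_F\rangle$ by the tangential-normal component $\couplestress_{h,nt}$. Recall that, in the sense of the lemma, $a\simeq b$ means $a\le Cb$ and $b\le Ca$; one of these two inequalities is trivial here. Since $h\sum_{F}\|\sigma_{h,nn}\|^2_{L^2(F)}\ge 0$, we immediately have $\|\sigma_h\|^2_{L^2}\le \|\sigma_h\|^2_{L^2} + h\sum_{F}\|\sigma_{h,nn}\|^2_{L^2(F)}$, i.e. the estimate $a\le Cb$ with $C=1$.

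For the reverse direction it suffices to control the boundary sum by the volume norm, that is, to show $h\sum_{F}\|\sigma_{h,nn}\|^2_{L^2(F)}\le C\|\sigma_h\|^2_{L^2}$. The key ingredient is the discrete (inverse) trace inequality: for any polynomial $p$ on a shape-regular element $T$ of diameter $h_T$ one has $\|p\|^2_{L^2(\partial T)}\le C h_T^{-1}\|p\|^2_{L^2(T)}$. I would justify this by the standard scaling argument — pulling back to the reference element, using that all norms on the finite-dimensional space of polynomials of degree at most $k$ are equivalent, and scaling the resulting bound back to $T$.

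I would then apply this inequality to the scalar polynomial $\sigma_{h,nn}|_F$, which is the restriction to the facet $F$ of a component of the elementwise-polynomial tensor $\sigma_h$. Using the pointwise bound $|\langle \sigma_h n_F, n_F\rangle|\le \|\sigma_h\|$ (with $|n_F|=1$), for each facet $F$ with an adjacent element $T_F$ I obtain $\|\sigma_{h,nn}\|^2_{L^2(F)}\le \|\sigma_h\|^2_{L^2(\partial T_F)}\le C h^{-1}\|\sigma_h\|^2_{L^2(T_F)}$. Summing over all facets and noting that each element has a fixed number of facets (four for a tetrahedron), so that each volume contribution $\|\sigma_h\|^2_{L^2(T)}$ is counted only a bounded number of times, gives $h\sum_{F}\|\sigma_{h,nn}\|^2_{L^2(F)}\le C\sum_{T}\|\sigma_h\|^2_{L^2(T)} = C\|\sigma_h\|^2_{L^2(\Omega)}$, which is the nontrivial inequality $b\le Ca$.

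The analytic content is entirely contained in the one-line discrete trace inequality; the only point requiring care is the bookkeeping of the mesh parameter. Since the statement uses a single global $h$, I would work over a quasi-uniform family so that $h_T\simeq h$ uniformly and the powers of $h$ cancel cleanly (or, equivalently, absorb $h_T\simeq h$ into the shape-regularity constant). The shared-facet issue is harmless: normal-normal continuity of $\HHJ^k$ (respectively tangential-normal continuity of $\MCS^k$) makes $\sigma_{h,nn}$ (respectively $\couplestress_{h,nt}$) single-valued on $F$, so it may be estimated from either adjacent element. This scaling bookkeeping is really the only potential obstacle.
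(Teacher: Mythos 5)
Your argument is correct and is exactly the approach the paper takes: the paper's proof consists of the single line ``Scaling arguments, see also \cite{pechstein2011tangential, gopalakrishnan2020mass}'', and your write-up simply fills in those scaling arguments (trivial direction plus the discrete trace/inverse inequality $\|p\|^2_{L^2(\partial T)}\leq Ch_T^{-1}\|p\|^2_{L^2(T)}$ with the bounded-overlap facet bookkeeping). No discrepancy to report.
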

\begin{proof}
	Scaling arguments, see also \cite{pechstein2011tangential, gopalakrishnan2020mass}.
\end{proof}
Now we are ready  
to prove convergence rates robust in $\mu_c$.
\begin{theorem}[Convergence]
	\label{thm:convergence}
	Let $(u,\omega,\couplestress,\sigma)$ be the exact solution of linear Cosserat elasticity and $(u_h,\omega_h,\couplestress_h,\sigma_h,\gamma_h)\in \NedII^k\times\RT^{k-1}\times\MCS^{k-1}\times\HHJ^k\times\RT^{k-1}$ the discrete solution of Problem~\ref{prob:mcs_tdnns_method_gamma}. Assume for $0\leq l\leq k-1$ the regularity $u\in [H^1(\Omega)]^3\cap [H^{l+2}(\mathcal{T})]^3$, $\omega\in [H^1(\Omega)]^3\cap [H^{l+1}(\mathcal{T})]^3$, $\couplestress\in [H^1(\Omega)]^{3\times 3}\cap [H^{l+1}(\mathcal{T})]^{3\times 3}$, and $\sigma\in [H^1(\Omega)]^{3\times 3}\cap [H^{l+1}(\mathcal{T})]^{3\times 3}$. Then there holds, with a constant $C\neq C(\mu_c)$,
	\begin{flalign}
		 \begin{split}
   \label{eq:convergence}
			\|u-&u_h\|_{V_h} + \|\omega-\omega_h\|_{W_h} + \|\couplestress-\couplestress_h\|_{L^2} + \|\sigma-\sigma_h\|_{L^2} + \|\gamma-\gamma_h\|_{\Gamma} \\
			 &\qquad\leq C h^l\left(\|u\|_{H^{l+1}} +\|\omega\|_{H^{l+1}} + \|\couplestress\|_{H^{l}} + \|\sigma\|_{H^{l}}+ \frac{1}{\sqrt{\mu_c}}\|\gamma\|_{H^{l}}\right),
		 \end{split}\\
   \begin{split}
   \label{eq:improved_convergence}
			\|u-&u_h\|_{V_h} + \|\omega_h-\mathcal{I}^{\RT,k-1}\omega\|_{W_h} + \|\couplestress-\couplestress_h\|_{L^2} + \|\sigma-\sigma_h\|_{L^2} + \|\gamma-\gamma_h\|_{\Gamma} \\
			 &\qquad\leq C h^{l+1}\left(\|u\|_{H^{l+2}} + \|\couplestress\|_{H^{l+1}} + \|\sigma\|_{H^{l+1}}+ \frac{1}{\sqrt{\mu_c}}\|\gamma\|_{H^{l+1}}\right),
		 \end{split}
	\end{flalign}
	where the norms $\|\cdot\|_{\Gamma}$, $\|\cdot\|_{V_h}$, and $\|\cdot\|_{W_h}$ are defined as in Theorem~\ref{thm:robustness_tdnns}.
\end{theorem}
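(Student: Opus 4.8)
The plan is to derive a quasi-optimal, Céa-type error bound by pairing the $\mu_c$-robust Brezzi stability of Theorem~\ref{thm:robustness_tdnns} with the Galerkin orthogonality \eqref{eq:galerkin_orthogonality} of Theorem~\ref{thm:consistency}, and only at the very end to insert the interpolation rates of Lemma~\ref{lem:interpolation_ops_approximation}. First I would introduce the canonical interpolants $\hat\sigma_h:=\mathcal{I}^{\HHJ,k}\sigma$, $\hat\couplestress_h:=\mathcal{I}^{\MCS,k-1}\couplestress$, $\hat\gamma_h:=\mathcal{I}^{\RT,k-1}\gamma$, $\hat u_h:=\mathcal{I}^{\NedII,k}u$, and $\hat\omega_h:=\mathcal{I}^{\RT,k-1}\omega$, all of which lie in the discrete spaces of Problem~\ref{prob:mcs_tdnns_method_gamma}. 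I then split each error into an interpolation part and a discrete part, for instance $\sigma-\sigma_h=(\sigma-\hat\sigma_h)+(\hat\sigma_h-\sigma_h)$, and collect these into the tuples $\eta_X:=(\couplestress-\hat\couplestress_h,\sigma-\hat\sigma_h,\gamma-\hat\gamma_h)$, $\eta_Y:=(u-\hat u_h,\omega-\hat\omega_h)$ (interpolation errors) and $e^h_X:=(\hat\couplestress_h-\couplestress_h,\hat\sigma_h-\sigma_h,\hat\gamma_h-\gamma_h)$, $e^h_Y:=(\hat u_h-u_h,\hat\omega_h-\omega_h)$ (discrete errors). By the triangle inequality it then suffices to bound $\|e^h_X\|_{X_h}+\|e^h_Y\|_{Y_h}$, since the $\eta$-parts are controlled directly by Lemma~\ref{lem:interpolation_ops_approximation}.

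To control the discrete error I would use that the pair $(a,b)$ satisfies the Brezzi conditions uniformly in $\mu_c$ (Theorem~\ref{thm:robustness_tdnns}), so the combined form $\mathcal{A}((x,z),(\chi,w)):=a(x,\chi)+b(\chi,z)+b(x,w)$ is inf--sup stable on $(X_h\times Y_h)^2$ with a $\mu_c$-independent constant. Since the total error satisfies $\mathcal{A}((e_X,e_Y),(\chi_h,w_h))=0$ by \eqref{eq:galerkin_orthogonality}, subtracting the interpolation part yields $\mathcal{A}((e^h_X,e^h_Y),(\chi_h,w_h))=-\mathcal{A}((\eta_X,\eta_Y),(\chi_h,w_h))$, and the discrete inf--sup gives $\|e^h_X\|_{X_h}+\|e^h_Y\|_{Y_h}\le C\sup_{(\chi_h,w_h)}|\mathcal{A}((\eta_X,\eta_Y),(\chi_h,w_h))|/\|(\chi_h,w_h)\|$. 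I would bound each contribution with the $\mu_c$-free continuity estimates for $a$ and $b$ quoted in the proof of Theorem~\ref{thm:robustness_tdnns}. The single term that resists this treatment is $\langle\omega-\hat\omega_h,\div\Psi_h\rangle_{H(\div)^{\ast}}$; here I invoke Lemma~\ref{lem:orthogonality_RT_int}, which annihilates it exactly because $\hat\omega_h=\mathcal{I}^{\RT,k-1}\omega$ and $\Psi_h\in\MCS^{k-1}$. This produces $\|e^h_X\|_{X_h}+\|e^h_Y\|_{Y_h}\le C(\|\eta_X\|_{X_h}+\|\eta_Y\|_{Y_h})$ with $C\neq C(\mu_c)$.

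Combining the two steps and inserting Lemma~\ref{lem:interpolation_ops_approximation}, together with the norm equivalences of Lemma~\ref{lem:equivalency_norms} to absorb the facet-jump contributions hidden in $\|\cdot\|_{V_h}$ and $\|\cdot\|_{W_h}$, delivers the rate \eqref{eq:convergence}. For the improved estimate \eqref{eq:improved_convergence} I would measure the rotation error against its interpolant, i.e.\ estimate $\|\omega_h-\mathcal{I}^{\RT,k-1}\omega\|_{W_h}=\|e^{h,\omega}_Y\|_{W_h}$ in place of the full error $\|\omega-\omega_h\|_{W_h}$. Because the orthogonality of Lemma~\ref{lem:orthogonality_RT_int} has already removed the leading $\div\Psi_h$-coupling to $\omega$, the bound no longer requires the $\|\omega\|_{H^{l+1}}$ term; exploiting instead the extra regularity $u\in[H^{l+2}(\mathcal{T})]^3$ with the higher-order N\'ed\'elec and stress interpolants (and the superconvergent $L^2$-rates in Lemma~\ref{lem:interpolation_ops_approximation}) gains one power of $h$, giving the $h^{l+1}$ estimate.

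The main obstacle I anticipate is the bookkeeping of $\mu_c$-robustness throughout: every continuity, inf--sup, and interpolation constant must be shown free of $\mu_c$, and this is delicate exactly where the shear-stress variable $\gamma$ enters through the weighted norm $\|\gamma\|_{\Gamma}=\frac{1}{\sqrt{\mu_c}}\|\gamma\|_{L^2}$ and the $\mu_c$-weighted term $\mu_c\|\half\curl u-\omega\|_{L^2}^2$ inside $\|\cdot\|_{Y_h}$. These two weights must balance so that the residual estimate carries no hidden factor of $\mu_c$. This in turn hinges on the discrete compatibility $\curl u_h\in\RT^{k-1}$ (Lemma~\ref{lem:cont_curl}) and the interpolation orthogonality (Lemma~\ref{lem:orthogonality_RT_int}), which together guarantee that the constraint $\half\curl u=\omega$ is respected at the discrete level and that any potentially $\mu_c$-singular contribution either cancels or survives only through the benign factor $\frac{1}{\sqrt{\mu_c}}\|\gamma\|$, which stays bounded as $\mu_c\to\infty$.
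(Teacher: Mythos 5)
Your overall strategy coincides with the paper's: split each error via the canonical interpolants, bound the interpolation part by Lemma~\ref{lem:interpolation_ops_approximation}, and control the discrete part through the $\mu_c$-uniform inf--sup of Theorem~\ref{thm:robustness_tdnns} combined with the Galerkin orthogonality \eqref{eq:galerkin_orthogonality}, using Lemma~\ref{lem:orthogonality_RT_int} to decouple the rotation and obtain the improved rate \eqref{eq:improved_convergence}. Two steps, however, are not watertight as you state them.

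First, the continuity bounds for $b$ quoted in the proof of Theorem~\ref{thm:robustness_tdnns} hold for \emph{discrete} arguments; the interpolation errors $\eta_X$, $\eta_Y$ do not lie in $X_h\times Y_h$, so you cannot conclude $|\mathcal{A}((\eta_X,\eta_Y),\cdot)|\lesssim \|\eta_X\|_{X_h}+\|\eta_Y\|_{Y_h}$ by citing those estimates. The paper instead expands the duality pairings elementwise and bounds the resulting facet contributions by $\sqrt{\sum_{F}h\|(\couplestress-\mathcal{I}^{\MCS,k-1}\couplestress)_{nt}\|^2_{L^2(F)}}$ and the analogous $\sigma_{nn}$-quantity, which are then controlled by Lemma~\ref{lem:interpolation_ops_approximation} itself --- not by Lemma~\ref{lem:equivalency_norms}, which is a scaling result valid only for piecewise polynomials. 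Second, and more seriously, the term $\int_\Omega\langle\delta_h,\half\curl(u-\mathcal{I}^{\NedII,k}u)-(\omega-\mathcal{I}^{\RT,k-1}\omega)\rangle\,dx$ is precisely where a hidden factor $\sqrt{\mu_c}$ threatens the robustness, and neither Lemma~\ref{lem:cont_curl} nor Lemma~\ref{lem:orthogonality_RT_int} disposes of it. The indispensable ingredient is the commuting-diagram property of the canonical interpolants, $\curl\,\mathcal{I}^{\NedII,k}=\mathcal{I}^{\RT,k-1}\curl$, which identifies that difference with $\tfrac{1}{2\mu_c}(\gamma-\mathcal{I}^{\RT,k-1}\gamma)$ and lets the term be absorbed into $\|\delta_h\|_{\Gamma}\|\hat\gamma\|_{\Gamma}$. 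Without invoking that property, the naive Cauchy--Schwarz bound leaves a factor $\sqrt{\mu_c}$ multiplying $\|\curl\hat u\|_{L^2}+\|\hat\omega\|_{L^2}$ and the claimed $\mu_c$-independence fails; with it, your argument goes through exactly as in the paper.
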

\begin{proof}
	Throughout the proof, we use the notation $a\lesssim b$ if $a\leq C\, b$ with a constant $C>0$ independent of the mesh-size and $\mu_c$. We add and subtract the canonical interpolation operators from Definition~\ref{def:canonical_interp}, apply the triangle inequality, and use the notation $\hat{u}_h:=u_h-\mathcal{I}^{\NedII,k}u$, $\hat{u}:=u-\mathcal{I}^{\NedII,k}u$, etc.,
	\begin{flalign*}
		&\|u-u_h\|_{V_h} + \|\omega-\omega_h\|_{W_h} + \|\couplestress-\couplestress_h\|_{L^2} + \|\sigma-\sigma_h\|_{L^2} + \|\gamma-\gamma_h\|_{\Gamma}\leq \\
		&
		\|\hat{u}\|_{V_h}\! +\! \|\hat\omega\|_{W_h} \!+ \!\|\hat\couplestress\|_{L^2} \!+ \!\|\hat\sigma\|_{L^2} \!+ \!\|\hat\gamma\|_{\Gamma} \!+ \!\|\hat{u}_h\|_{V_h} \!+\! \|\hat\omega_h\|_{W_h} \!+ \!\|\hat\couplestress_h\|_{L^2}\! +\! \|\hat\sigma_h\|_{L^2} \!+\! \|\hat\gamma_h\|_{\Gamma}.
	\end{flalign*}
	The first five terms can be estimated directly with Lemma~\ref{lem:interpolation_ops_approximation}
 \begin{flalign*}
 \|\hat{u}\|_{V_h} + \|\hat\omega\|_{W_h} &+ \|\hat\couplestress\|_{L^2} + \|\hat\sigma\|_{L^2} + \|\hat\gamma\|_{\Gamma}\\
 &\lesssim h^l\left(\|u\|_{H^{l+1}} +\|\omega\|_{H^{l+1}} + \|\couplestress\|_{H^{l}} + \|\sigma\|_{H^{l}}+ \nicefrac{1}{\sqrt{\mu_c}}\|\gamma\|_{H^{l}}\right).
 \end{flalign*}
 For the remaining terms, we use the inf-sup stability of Problem~\ref{prob:mcs_tdnns_method_gamma} and the Galerkin orthogonality \eqref{eq:galerkin_orthogonality}. With  $\|\cdot\|_{X_h}$ and $\|\cdot\|_{Y_h}$ as in the proof of Theorem~\ref{thm:robustness_tdnns}, $\varUpsilon_h:= (\Psi_h,\Theta_h,\delta_h)\in X_h$, and $\kappa_h:= (v_h,\xi_h)\in Y_h$ we obtain 
	\begin{flalign*}
		&\|\hat{u}_h\|_{V_h} + \|\hat{\omega}_h\|_{W_h} + \|\hat\couplestress_h\|_{L^2} + \|\hat\sigma_h\|_{L^2} + \|\hat\gamma_h\|_{\Gamma}\leq \| (\hat\couplestress_h,\hat\sigma_h,\hat\gamma_h)\|_{X_h} + \|(\hat u_h,\hat\omega_h)\|_{Y_h}\\
		&\lesssim \sup\limits_{(\varUpsilon_h,\kappa_h)\in {X_h\times Y_h}}\frac{a((\hat\couplestress_h,\hat\sigma_h,\hat\gamma_h),\varUpsilon_h)+ b((\hat\couplestress_h,\hat\sigma_h,\hat\gamma_h),\kappa_h)+b(\varUpsilon_h,(\hat u_h,\hat\omega_h))}{\| \varUpsilon_h\|_{X_h} + \|\kappa_h\|_{Y_h}}\\
		& = \sup\limits_{(\varUpsilon_h,\kappa_h)\in {X_h\times Y_h}}\frac{a((\hat\couplestress,\hat\sigma,\hat\gamma),\varUpsilon_h)+ b((\hat\couplestress,\hat\sigma,\hat\gamma),\kappa_h)+b(\varUpsilon_h,(\hat u,\hat\omega))}{\| \varUpsilon_h\|_{X_h} + \|\kappa_h\|_{Y_h}}.
	\end{flalign*}
We estimate the bilinear forms $a$ and $b$. 
 For $a$ we get with Cauchy-Schwarz inequality
	\begin{flalign*}
		|a((\hat\couplestress,\hat\sigma,\hat\gamma),\varUpsilon_h)|
		&\lesssim \|(\hat \couplestress, \hat \sigma, \hat \gamma)\|_{X_h}\|\varUpsilon_h\|_{X_h}.
	\end{flalign*}
	For the two expressions involving $b$ we carefully treat the element boundary terms
	\begin{flalign*}
		|b((\hat\couplestress,\hat\sigma,\hat\gamma),\kappa_h)|
		&\!\leq \!\left|\sum_{T\in\mathcal{T}}-\int_T(\langle\hat{\couplestress},\grad \xi_h\rangle + \langle\hat{\sigma},\grad v_h\rangle)\,dx \!+\! \int_{\partial T}(\langle \hat{\couplestress}_{nt},\xi_{h,t}\rangle \!+\! \hat\sigma_{nn}v_{h,n})\,ds\right| \\
		&\qquad+ \|\hat{\gamma}\|_{\Gamma}\sqrt{\mu_c}\left\|\half\curl v_h-\xi_h\right\|_{L^2}.
	\end{flalign*}
	Noting that $\hat{m}$ is tangential-normal continuous, we can estimate with Cauchy-Schwarz
	\begin{flalign*}
		&\sum_{T\in\mathcal{T}}\Big(\int_T \langle\hat{\couplestress},\grad \xi_h\rangle dx  \!-\! \int_{\partial T}\langle \hat{\couplestress}_{nt},\xi_{h,t}\rangle ds\Big)\!=\!\sum_{T\in\mathcal{T}}\int_T \langle\hat{\couplestress},\grad \xi_h\rangle dx  \!-\!\sum_{F\in\mathcal{F}} \int_{F}\langle \hat{\couplestress}_{nt},[\![\xi_{h,t}]\!]\rangle ds\\
		&\qquad\qquad\leq \sum_{T\in\mathcal{T}}\|\hat{\couplestress}\|_{L^2(T)}\|\grad \xi_h\|_{L^2(T)} +\sum_{F\in\mathcal{F}}\sqrt{h}\|\hat{\couplestress}_{nt}\|_{L^2(F)}\frac{1}{\sqrt{h}}\|[\![\xi_{h,t}]\!]\|_{L^2(F)}\\
		&\qquad\qquad\leq \|\hat{\couplestress}\|_{L^2}\sqrt{\sum_{T\in\mathcal{T}}\|\grad \xi_h\|^2_{L^2(T)}} +\sqrt{\sum_{F\in\mathcal{F}}h\|\hat{\couplestress}_{nt}\|^2_{L^2(F)}}\sqrt{\sum_{F\in\mathcal{F}}\frac{1}{h}\|[\![\xi_{h,t}]\!]\|^2_{L^2(F)}}\\
		&\qquad\qquad\lesssim \Big(\|\hat{\couplestress}\|_{L^2}+\sqrt{\sum_{F\in\mathcal{F}}h\|\hat{\couplestress}_{nt}\|^2_{L^2(F)}}\Big) \|\xi_h\|_{W_h}.
	\end{flalign*}
    Similarly, since 
    $\hat{\sigma}$ is normal-normal continuous and symmetric we have
	\begin{flalign*}
		\Big|\sum_{T\in\mathcal{T}}\Big(\int_T \langle\hat{\sigma},\sym\grad v_h\rangle\,dx \!-\! \int_{\partial T} \hat\sigma_{nn}v_{h,n}\,ds\Big)\Big| \!\leq\! \Big(\|\hat{\sigma}\|_{L^2}\!+\!\sqrt{\sum_{F\in\mathcal{F}}h\|\hat{\sigma}_{nn}\|^2_{L^2(F)}}\Big) \|v_h\|_{V_h}.
	\end{flalign*}
	Thus, we obtain
	\begin{flalign*}
		b((\hat\couplestress,\hat\sigma,\hat\gamma),\kappa_h)\!\lesssim\! \Big(\|(\hat{\couplestress},\hat{\sigma},\hat{\gamma})\|_{X_h}\!+\!\sqrt{\sum_{F\in\mathcal{F}}h\|\hat{\couplestress}_{nt}\|^2_{L^2(F)}}\!+\!\sqrt{\sum_{F\in\mathcal{F}}h\|\hat{\sigma}_{nn}\|^2_{L^2(F)}}\Big)\|\kappa_h\|_{Y_h}.
	\end{flalign*}
For the second expression involving $b$ we have
 \begin{flalign*}
 b(\varUpsilon_h,(\hat u,\hat\omega)) &= \sum_{T\in\mathcal{T}}\int_T(\langle\div\Psi_h,\hat\omega\rangle - \langle\Theta_h,\grad \hat u\rangle)\,dx - \int_{\partial T}(\Psi_{h,nn}\hat\omega_n - \Theta_{h,nn}\hat u_n)\,ds\\
		&\qquad-\int_{\Omega}\left\langle\delta_h,\half\curl \hat u-\hat\omega\right\rangle\,dx.
 \end{flalign*}
 Using Lemma~\ref{lem:orthogonality_RT_int} the terms involving $\Psi_h$ vanish.  The terms with $\Theta_h$ can be estimated in the same vein as before, using the equivalence of discrete norms in Lemma~\ref{lem:equivalency_norms},
 \begin{flalign*}
 \left|\sum_{T\in\mathcal{T}}-\int_T \langle\Theta_h,\grad \hat u\rangle\,dx + \int_{\partial T} \Theta_{h,nn}\hat u_n\,ds\right|\lesssim \|\Theta_h\|_{L^2}\|\hat u\|_{V_h}.
 \end{flalign*}
	For the last term there holds with the commuting property of interpolants, \\$\curl(\mathcal{I}^{\NedII^k}u) = \mathcal{I}^{\RT^{k-1}}(\curl u)$,
 \begin{flalign*}
 \left|\int_{\Omega}\left\langle\delta_h,\half\curl \hat u-\hat\omega\right\rangle\,dx\right|=\left|\int_{\Omega}\left\langle\delta_h,\nicefrac{1}{\mu_c}\hat\gamma\right\rangle\,dx\right|\le \|\delta_h\|_{\Gamma}\|\hat\gamma\|_{\Gamma}.
 \end{flalign*}
 Thus, we arrive at
 \begin{flalign*}
 b(\varUpsilon_h,(\hat u,\hat\omega))\!\lesssim \!\|\Theta_h\|_{L^2}\|\hat u\|_{V_h}\!+\!\|\delta_h\|_{\Gamma}\|\hat\gamma\|_{\Gamma}\!\lesssim \!(\|\hat u\|_{V_h}\!+\!\|\hat\gamma\|_{\Gamma})\|(\Theta_h,\Psi_h,\delta_h)\|_{X_h}.
 \end{flalign*}
 Recognizing that the terms involving $\varUpsilon_h$ and $\kappa_h$ in the numerator and denominator of the supreme cancel, we obtain with Lemma~\ref{lem:interpolation_ops_approximation} for $0\leq l\leq k-1$
	\begin{flalign}
		\| (\hat\couplestress_h,\hat\sigma_h&,\hat\gamma_h)\|_{X_h} +  \|(\hat u_h,\hat\omega_h)\|_{Y_h}\nonumber\\
		&\!\lesssim\!\| \hat\couplestress\|_{L^2}\!+\!\|\hat\sigma\|_{L^2} \!+\! \|\hat\gamma\|_{\Gamma} \!+\! \sqrt{\sum_{F\in\mathcal{F}}h\|\hat{\couplestress}_{nt}\|^2_{L^2(F)}}\!+\!\sqrt{\sum_{F\in\mathcal{F}}h\|\hat{\sigma}_{nn}\|^2_{L^2(F)}} \!+\!  \|\hat u\|_{V_h} \nonumber\\
		&\lesssim h^{l}\left( \|\couplestress\|_{H^{l}} + \|\sigma\|_{H^{l}} + \|u\|_{H^{l+1}} + \nicefrac{1}{\sqrt{\mu_c}}\|\gamma\|_{H^{l}}\right),\label{eq:conv_intermediate}
	\end{flalign}
	which yields \eqref{eq:convergence}. For \eqref{eq:improved_convergence} we note that estimate \eqref{eq:conv_intermediate} can be improved to $h^{l+1}$ convergence as it is independent of $\omega_h$.
\end{proof}

\begin{remark}
	For $k=1$, we do not obtain convergence for \eqref{eq:convergence}. The reason is that for the rotations $\omega_h$ the Raviart--Thomas space $\RT^0$ does not contain the whole linear polynomial space. However, thanks to the orthogonality Lemma~\ref{lem:orthogonality_RT_int}, the filtered error $\omega_h-\IntRT[k-1]$ leads to super-convergence of the other fields.
\end{remark}

\begin{corollary}[Stability \& convergence of Problem~\ref{prob:mcs_method}]
	\label{cor:stability_convergence_mcs}
	Adopt the assumptions from Theorem~\ref{thm:convergence}. Problem~\ref{prob:mcs_method} is stable, consistent, and there hold the estimates
 \begin{flalign*}
			\|u-&u_h\|_{H^1} + \|\omega-\omega_h\|_{W_h} + \|\couplestress-\couplestress_h\|_{L^2} + \|\gamma-\gamma_h\|_{\Gamma} \\
			 &\qquad\leq C h^l\left(\|u\|_{H^{l+1}} +\|\omega\|_{H^{l+1}} + \|\couplestress\|_{H^{l}} + \frac{1}{\sqrt{\mu_c}}\|\gamma\|_{H^{l}}\right),\\
			\|u-&u_h\|_{H^1} + \|\omega_h-\mathcal{I}^{\RT,k-1}\omega\|_{W_h} + \|\couplestress-\couplestress_h\|_{L^2} + \|\gamma-\gamma_h\|_{\Gamma} \\
			 &\qquad\leq C h^{l+1}\left(\|u\|_{H^{l+2}} + \|\couplestress\|_{H^{l+1}} + \frac{1}{\sqrt{\mu_c}}\|\gamma\|_{H^{l+1}}\right).
	\end{flalign*}
\end{corollary}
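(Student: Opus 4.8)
The plan is to obtain the corollary as a reduction of the TDNNS--MCS analysis in Theorems~\ref{thm:robustness_tdnns}, \ref{thm:consistency}, and \ref{thm:convergence}: Problem~\ref{prob:mcs_method} is exactly the scheme obtained from Problem~\ref{prob:mcs_tdnns_method} by dropping the elasticity stress $\sigma$ as an independent unknown and seeking the displacement in the $H^1$-conforming space $\vLag^k$ instead of $\NedII^k$. The only structural change is that $u$ is now controlled by the coercivity of the elasticity form $\int_\Omega\langle\sym\grad u,\sym\grad v\rangle_{\mathcal{C}}\,dx$ together with Korn's inequality, which takes over the role that the $\HHJ$ inf-sup condition played for $u$ in the TDNNS setting; this is also why the constant in the corollary is independent of $\mu_c$ but not of the aspect ratio. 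Concretely, I would introduce $\gamma:=2\mu_c(\half\curl u-\omega)\in\RT^{k-1}$ as in Problem~\ref{prob:mcs_tdnns_method_gamma} and set up Brezzi's framework over the reduced blocks $X_h:=\MCS^{k-1}\times\RT^{k-1}$ and $Y_h:=\vLag^k\times\RT^{k-1}$, with $\|u\|_{H^1}$ replacing $\|u\|_{V_h}$ in the definition of $\|\cdot\|_{Y_h}$.

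For stability I would run the proof of Theorem~\ref{thm:robustness_tdnns} on these blocks. Continuity of the forms $a,b$ and coercivity of $a$ follow from Cauchy--Schwarz and the MCS continuity estimate for $\langle\div\couplestress,\omega\rangle_{H(\div)^\ast}$, and in the inf-sup step I would still control $\|\omega_h\|_{W_h}$ through the MCS test function $\tilde\couplestress_h$ and the coupling through $\tilde\gamma_h:=-\sqrt{\mu_c}(\half\curl u_h-\omega_h)$; the difference is that $\|u_h\|_{H^1}$ is now furnished by the coercive elasticity contribution via Korn rather than by an $\HHJ$ test function. Since every constant entering this argument is $\mu_c$-independent, Brezzi's theorem gives the $\mu_c$-robust well-posedness. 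Consistency and Galerkin orthogonality then follow word for word from Theorem~\ref{thm:consistency} after deleting all terms carrying $\sigma$ or $\Theta_h$, using that the assumed regularity collapses the surviving duality pairings to $L^2$ integrals and that the strong form \eqref{eq:strong_cosserat} closes the identity.

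For the convergence rates I would follow the error splitting of Theorem~\ref{thm:convergence}, inserting a comparison $\mathcal{I}u\in\vLag^k$ for the displacement so that the $V_h$-norm is replaced by $\|\cdot\|_{H^1}$ and discarding the $\HHJ$-interpolation contributions. The orthogonality Lemma~\ref{lem:orthogonality_RT_int} is untouched, so $\langle\div\Psi_h,\hat\omega\rangle_{H(\div)^\ast}$ still vanishes and produces the filtered, super-convergent error $\omega_h-\mathcal{I}^{\RT,k-1}\omega$, while the bilinear forms $a$ and $b$ are estimated exactly as in the cited proof.

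The hard part is the coupling-consistency term $\int_\Omega\langle\half\curl\hat u-\hat\omega,\delta_h\rangle\,dx$. In the TDNNS proof its $\mu_c$-robust bound relied on the commuting relation $\curl\mathcal{I}^{\NedII,k}u=\mathcal{I}^{\RT,k-1}\curl u$, which rewrites $\half\curl\hat u-\hat\omega$ as $\frac{1}{2\mu_c}(\gamma-\mathcal{I}^{\RT,k-1}\gamma)$ and thereby trades the dangerous $\|\delta_h\|_{L^2}$ for the harmless $\|\delta_h\|_{\Gamma}$. This diagram is genuinely unavailable for $\vLag^k$: since $\curl\colon\vLag^k\to\RT^{k-1}$ is not a step of the de~Rham complex, the Lagrange interpolant does not reproduce the facet fluxes of $\curl u$, and the residual $\curl\mathcal{I}u-\mathcal{I}^{\RT,k-1}\curl u$, although of optimal order in $L^2$, is paired here with $\|\delta_h\|_{L^2}=\sqrt{\mu_c}\,\|\delta_h\|_{\Gamma}$ and would reintroduce a $\sqrt{\mu_c}$ factor. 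I expect this to be the crux of the corollary. The way around it for the basic estimate \eqref{eq:convergence} is to tailor the rotation comparison to the coupling, taking $\tilde\omega:=\half\curl(\mathcal{I}u)-\frac{1}{2\mu_c}\mathcal{I}^{\RT,k-1}\gamma\in\RT^{k-1}$, which makes $\half\curl\hat u-\hat\omega=\frac{1}{2\mu_c}(\gamma-\mathcal{I}^{\RT,k-1}\gamma)$ exactly and restores $\mu_c$-robustness with no commuting property; one must still verify that this $\tilde\omega$ approximates $\omega$ optimally in $\|\cdot\|_{W_h}$. For the improved estimate \eqref{eq:improved_convergence}, where the filtered error is measured against $\mathcal{I}^{\RT,k-1}\omega$, the commuting residual reappears upon reconciling $\tilde\omega$ with $\mathcal{I}^{\RT,k-1}\omega$, and controlling it at the sharper order $h^{l+1}$---e.g.\ by exploiting the interior orthogonality of the $\RT$ interpolation against $\delta_h$ together with an inverse estimate, or via a commuting quasi-interpolant adapted to the $\vLag^k$--$\RT^{k-1}$ pair---is the most delicate point I anticipate.
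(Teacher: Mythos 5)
Your overall strategy---obtain the corollary by rerunning the analysis of Theorems~\ref{thm:robustness_tdnns}, \ref{thm:consistency}, and \ref{thm:convergence} on the reduced system---is exactly the paper's. The paper implements the reduction slightly differently: instead of dropping $\sigma$ and invoking coercivity plus Korn, it rewrites the Lagrangian elasticity block as an \emph{equivalent} mixed method with $\sigma_h\in[\mathcal{P}^{k-1}(\mathcal{T})]^{3\times3}_{\sym}$ fully discontinuous, sets $\|u_h\|_{V_h}:=\|u_h\|_{H^1}$, and notes that the duality pairing collapses to $\langle\div\sigma_h,u_h\rangle_{H(\curl)^{\ast}}=-\int_\Omega\langle\sigma_h,\grad u_h\rangle\,dx$, so that the four-field Brezzi structure and the three proofs carry over verbatim. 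The two devices are interchangeable (the inf-sup constant of the discontinuous stress block is again Korn's constant), so this difference is cosmetic.

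The substantive content of your proposal is the discussion of the cross term $\int_\Omega\langle\delta_h,\half\curl\hat u-\hat\omega\rangle\,dx$. You are right that its $\mu_c$-robust bound in Theorem~\ref{thm:convergence} hinges on $\curl\mathcal{I}^{\NedII,k}u=\mathcal{I}^{\RT,k-1}\curl u$, that no such commuting identity holds for a nodal interpolant into $\vLag^k$, and that pairing the resulting residual with $\|\delta_h\|_{L^2}=\sqrt{\mu_c}\,\|\delta_h\|_{\Gamma}$ would reintroduce a $\sqrt{\mu_c}$; the paper's two-sentence ``readily adapted'' does not address this, so your analysis goes beyond its written proof. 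Your repair---taking $\tilde\omega:=\half\curl(\mathcal{I}u)-\frac{1}{2\mu_c}\mathcal{I}^{\RT,k-1}\gamma$ as the rotation comparison---is the right kind of move, but as it stands it is in tension with another ingredient you rely on: Lemma~\ref{lem:orthogonality_RT_int} kills the $\Psi_h$ terms only when the comparison is the \emph{canonical} RT interpolant of $\omega$, and the filtered superconvergent quantity in the second estimate is likewise $\omega_h-\mathcal{I}^{\RT,k-1}\omega$. With a single comparison function you cannot have both the exact coupling identity and the RT orthogonality, so the term $\langle\div\Psi_h,\omega-\tilde\omega\rangle_{H(\div)^{\ast}}$ must now be estimated rather than discarded, and you still owe the bound $\|\omega-\tilde\omega\|_{W_h}\lesssim h^l(\cdots)$, which involves elementwise second derivatives of $u-\mathcal{I}u$ and the facet jumps $h^{-1/2}\|[\![(\curl\hat u)_t]\!]\|_{L^2(F)}$ (note this trades $\|\omega\|_{H^{l+1}}$ for $\|u\|_{H^{l+2}}$ on the right-hand side, harmless under the adopted regularity but a deviation from the stated bound). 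The improved estimate you explicitly leave open. So: same skeleton as the paper, a sharper eye for the one step that is not literally ``the same,'' but the repair is not yet complete.
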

\begin{proof}
	Rewrite the elasticity part $u$ in terms of an equivalent mixed method with $(u,\sigma)\in [H^1(\Omega)]^3\times [L^2(\Omega)]^{3\times 3}_{\sym}$ and $(u_h,\sigma_h)\in \vLag^k\times [\mathcal{P}^{k-1}(\mathcal{T})]^{3\times 3}_{\sym}$. 
 The proof of Theorems~\ref{thm:robustness_tdnns}, \ref{thm:consistency}, and \ref{thm:convergence} can readily be adapted by defining the norm $\|u_h\|_{V_h}=\|u_h\|_{H^1}$ and noting that $\langle \div \sigma_h,u_h\rangle_{H(\curl)^{\ast}}=-\int_{\Omega}\langle\sigma_h,\grad u_h\rangle\,dx$.
\end{proof}

\subsection{Post-processing of rotations}
From Theorem~\ref{thm:convergence} and Corollary~\ref{cor:stability_convergence_mcs} we obtain that the rotation field $\omega_h$ converges with order $k-1$ in the $W_h$-norm, whereas the couple stress $\couplestress_h$ converges with order $k$ in the $L^2$-norm. By the relation $\grad\omega =C_2^{-1}(\couplestress)$, however, we can construct a rotation field $\tilde{\omega}_h$ with improved convergence rates. A standard approach is to solve the problem
\begin{flalign*}
\tilde\omega_h=\argmin_{\substack{w_h\in [\mathcal{P}^k(\mathcal{T})]^3\\
\int_{T}w_h\,dx =\int_{T}\omega_h\,dx\,,\forall T\in\mathcal{T}}} \|\grad w_h-C_2^{-1}(\couplestress_h)\|_{L^2}.
\end{flalign*}
It is well-known, see e.g. \cite{Ste1991,CGS2010}, that $\tilde{\omega}_h$ converges in the $L^2$-norm with one order more than $\couplestress_h$. However, we  
cannot expect an improvement in the $W_h$-norm.

Motivated by \cite{CHZ2023}, where a N\'ed\'elec function is post-processed such that improved convergence in an $H(\curl)$-type norm is achieved, we propose a second type of post-processing. To this end, we define e.g. $\BDM^{k,\mathrm{dc}}$ as the BDM space of order $k$, where the normal continuity is broken, i.e., the coupling degrees of freedom are doubled. Further, we denote with $\RT^{0,\mathrm{dc},*}$ the discrete elementwise dual space, whose degrees of freedom coincide with the canonical interpolant from Definition~\ref{def:canonical_interp}.
\begin{problem}[Post-processing]
\label{prob:postprocess2}
Let $(\omega_h,\couplestress_h)\in \RT^{k-1}\times \MCS^{k-1}$ be part of the solution of Problem~\ref{prob:mcs_method} or Problem~\ref{prob:mcs_tdnns_method}. Find $(\tilde{\omega}_h,\chi_h)\in \BDM^{k,\mathrm{dc}}\times\RT^{0,\mathrm{dc},*}$ such that for all $({\xi}_h,\rho_h)\in \BDM^{k,\mathrm{dc}}\times\RT^{0,\mathrm{dc},*}$
\begin{subequations}
\label{eq:postprocess2}
		\begin{alignat}{3}
			&\sum_{T\in\mathcal{T}}\int_{T}\langle\grad \tilde{\omega}_h,\grad\xi_h\rangle\,dx&+&\chi_h(\xi_h) &=& \sum_{T\in\mathcal{T}}\int_{T}\langle C_2^{-1}( \couplestress_h),\grad\xi_h\rangle\,dx,\label{eq:postprocess2_a}\\
			&\rho_h(\tilde{\omega}_h)& & &=& \rho_h(\omega_h).\label{eq:postprocess2_b}
		\end{alignat}
\end{subequations}
\end{problem}
Problem~\ref{prob:postprocess2} can be solved elementwise. Equation \eqref{eq:postprocess2_b} states that $\IntRT[0]\tilde{\omega}_h=\IntRT[0]\omega_h$. Only the additional shape functions required 
to extend $\RT^{0}$ to $\BDM^k$ are optimized in order to satisfy the equation $\grad\tilde\omega_h =C_2^{-1}(\couplestress_h)$ in an $L^2$ sense. For this post-processing, which we  
use in all numerical examples, 
we also observed an increased convergence rate in the $W_h$-norm. A rigorous analysis of this post-processing scheme is, however, out of the scope of this work and is a topic of future research.

\section{Numerical examples}
\label{sec:numerics}

All numerical examples are performed in the open-source finite element library NGSolve\footnote{\href{www.ngsolve.org}{www.ngsolve.org}} \cite{Sch97,Sch14}.

\begin{table}[ht!]
\small
    \centering
    \begin{tabular}{c|c|c}
    method & description & reference\\
    \hline
        $\Lag^k$ &  Lagrangian elements for $u$ and $\omega$ & Problem~\ref{prob:primal_lagrange_formulation} \\
        M$^{k}$ & Lagrangian elements for $u$ and MCS for $\omega$ & Problem~\ref{prob:mcs_method} \\
        M-T$^{k}$ & TDNNS for $u$ and MCS for $\omega$ & Problem~\ref{prob:mcs_tdnns_method} \\
    \end{tabular}
    \caption{Used methods of polynomial order $k$ for numerical examples.}
    \label{tab:cosserat_elements}
\end{table}
The different methods for discretizing Cosserat elasticity are displayed in Table~\ref{tab:cosserat_elements}. 
We compute the relative errors in the norms defined within Theorem~\ref{thm:convergence} and Corollary~\ref{cor:stability_convergence_mcs}, if not stated otherwise, and the respective estimated order of convergence (eoc). For comparison, the number of coupling degrees of freedom (ncdof) is considered. The displacement, rotation, elasticity stress, and couple stress errors read
\begin{flalign*}
u_{\mathrm{err}}=\frac{\|u_h-u\|_{V_h}}{\|u\|_{H^1}}, \,\,\omega_{\mathrm{err}}=\frac{\|\omega_h-\omega\|_{W_h}}{\|\omega\|_{H^1}},\,\, \sigma_{\mathrm{err}}=\frac{\|\sigma_h-\sigma\|_{L^2}}{\|\sigma\|_{L^2}}, \,\,\couplestress_{\mathrm{err}}=\frac{\|\couplestress_h-\couplestress\|_{L^2}}{\|\couplestress\|_{L^2}}.
\end{flalign*}
When we use the MCS method to discretize the rotation field $\omega$, we additionally compute the error of the post-processed $\tilde{\omega}_h$ from Problem~\ref{prob:postprocess2}.

\subsection{Cylindrical bending of plate}
\label{subsec:cylindrical_bending_plate}
The first example we consider is a cylindrical bending of a thick plate \cite{XHZH2019} with length $L=20$, height $H=2$, and thickness $t=20$, see Figure~\ref{fig:cylindrical_bending_plate}. The material parameters\footnote{In \cite{XHZH2019} the authors use the parameters $a,b,c$, which are related by $\alpha=4\mu c L_c^2$, $\beta=4\mu b L_c^2$, $\gamma=4\mu L_c^2$, $\mu_c = \mu a$.} are chosen as $E=2500$, $\nu\in\{0.25,0.4999\}$ (and corresponding Lam\'e parameters $\mu=E/(2(1+\nu))$, $\lambda= E\nu/((1+\nu)(1-2\nu))$), $\mu_c=0.5\mu$, $\alpha=2\mu\,L_c^2$, $\beta=2\mu\,L_c^2$, and $\gamma=4\mu\,L_c^2$, with $L_c=1$ the characteristic length. A bending moment $M_x=100$ is applied on the left and right boundary. To ensure a pure bending, couple stresses are  
imposed at the front and back boundary to compensate for the effect of the Poisson ratio. The exact solution is given by
\begin{flalign*}
& u_x \!=\! \frac{M_xxy}{D+\gamma\,H},\quad u_y \!=\! -\frac{M_x}{2(D\!+\!\gamma\,H)}\Big(x^2\!+\!\frac{\nu}{1\!-\!\nu}y^2\Big)+\frac{M_x}{24(D\!+\!\gamma\,H)}\Big(L^2+\frac{\nu}{1\!-\!\nu}H^2\Big),\\
& u_z=0,\quad \omega_x=\omega_y=0,\qquad\omega_z = -\frac{M_xx}{D+\gamma\,H},	
\end{flalign*}
where $D=\frac{E\,H^3}{12(1-\nu^2)}$. No body forces apply. The non-zero stress components are
\begin{flalign*}
	\sigma_{xx} \!=\! \frac{E}{1-\nu^2}\frac{M_xy}{D\!+\!\gamma\,H},\quad \sigma_{zz} \!=\! \frac{\nu\, E}{1\!-\!\nu^2}\frac{M_xy}{D\!+\!\gamma\,H},\quad \couplestress_{xz} \!=\! -\frac{\beta M_x}{D\!+\!\gamma\,H},\quad \couplestress_{zx} \!=\! -\frac{\gamma M_x}{D\!+\!\gamma\,H}.
\end{flalign*}
Due to symmetry, we mesh only one quarter of the domain  
and prescribe  
symmetry boundary conditions.  
The other boundaries are left free  
or the bending forces are applied as Neumann boundary conditions. 
To obtain a well-posed problem, the mean value of $u_y$ is constrained to zero in terms of a Lagrange multiplier, noting that for the exact solution, there holds $\int_{\Omega}u_y\,dx=0$. We further note that for quadratic elements all methods reproduce the exact solution exactly, demonstrating 
the consistency of the proposed methods. Thus, we consider only the lowest-order linear elements for the displacements in this benchmark.

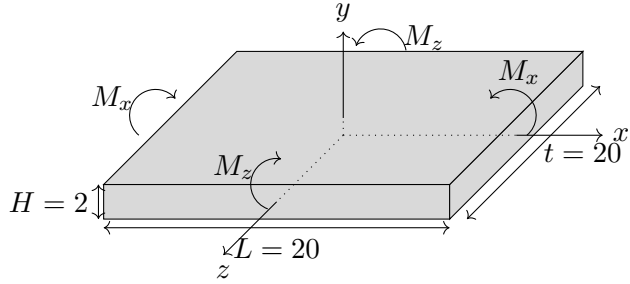
\begin{figure}[ht!]
	\centering
	\begin{tikzpicture}[scale=0.22]
		\def\L{20} 
		\def\H{2} 
		\def\t{20}

		\draw[fill=gray!30] (0,0,0) -- ++(\L,0,0) -- ++(0,\H,0) -- ++(-\L,0,0) -- cycle;
		\draw[fill=gray!30] (\L,0,0) -- ++(0,0,-\t) -- ++(0,\H,0) -- ++(0,0,\t) -- cycle;
		\draw[fill=gray!30] (0,\H,0) -- ++(\L,0,0) -- ++(0,0,-\t) -- ++(-\L,0,0) -- cycle;

		\draw[<->] (\L,-0.5,0) -- (0,-0.5,0) node[midway, below] {$L=20$};
		\draw[<->] (\L*1.05,0,0) -- (\L*1.05,0,-\t) node[midway, right] {$t=20$};
		\draw[<->] (-0.3,0,0) -- (-0.3,\H,0) node[midway, left] {$H=2$};

		\centerarc[->](3,6)(230:40:1.5)
		\centerarc[->](23.5,6)(-50:140:1.5)
		\centerarc[->](10,2)(250:80:1.5)
		\centerarc[->](16,9.5)(10:160:1.5)

		\draw[-, dotted] (\L/2,\H/2,-\t/2) -- ++(\L/2,0,0);
		\draw[->] (\L,\H/2,-\t/2)-- ++(5,0,0) node[right] {$x$};
		\draw[-, dotted] (\L/2,\H/2,-\t/2) -- ++(0,\H/2,0);
		\draw[->] (\L/2,\H,-\t/2)-- ++(0,5,0) node[above] {$y$};
		\draw[-, dotted] (\L/2,\H/2,-\t/2) -- ++(0,0,\t/2);
		\draw[->] (\L/2,\H/2,0)-- ++(0,0,8)  node[below] {$z$};

		\draw ({0.5},{7}) node {$M_x$};
		\draw ({24},{8.5}) node {$M_x$};
		\draw ({7.5},{3}) node {$M_z$};
		\draw ({18.5},{10.5}) node {$M_z$};
	\end{tikzpicture}
 \vspace*{-0.6cm}
	\caption{Geometry of cylindrical bending of a plate example.}
	\label{fig:cylindrical_bending_plate}
\end{figure}

Poisson ratios $\nu\in\{0.25,0.4999\}$ are considered to test for robustness in the nearly incompressible regime, where $\nu=0.4999$ corresponds to an almost incompressible material. As displayed in Table~\ref{tab:res_cyl_bend_plate_compr} all proposed methods perform equally well for $\nu=0.25$. Although the theory indicates only linear convergence for the fields (except for $\omega$ in the MCS settings), we observe higher convergence rates, which might results from  
the low polynomial exact solution. For more complicated solutions, this behavior vanishes. The MCS-TDNNS method is superior to the MCS method. This might be due to the plate structure with one dimension being smaller than the others and the TDNNS method being more robust with respect to anisotropic structures compared to standard elasticity discretization by Lagrangian elements. In the nearly incompressible regime, $\nu=0.4999$, we clearly observe reduced convergence for coarse meshes, cf. Table~\ref{tab:res_cyl_bend_plate_incompr}, except for the MCS-TDNNS method, highlighting the  
robustness of the TDNNS method for nearly incompressible elasticity. Specifically, the stress $\sigma_h$ can only be approximated well with the TDNNS method.

\begin{table}
  \centering
  \resizebox{0.9\textwidth}{!}{
\begin{tabular}{c|c|cc|cc|cc|cc|cc}
& ncdof & $u_{\mathrm{err}}$ & eoc & $\omega_{\mathrm{err}}$ & eoc & $\tilde{\omega}_{\mathrm{err}}$ & eoc & $\sigma_{\mathrm{err}}$ & eoc & $\couplestress_{\mathrm{err}}$ & eoc \\ 
 \hline
$\Lag^1$   & 139 & 0.4913 & -  & 0.4519 & -   & - & -  & 1.0522 & -  & 0.3960 & -  \\
  & 705 & 0.2549 & 0.95 & 0.2384 & 0.92  & - & -  & 0.8688 & 0.28 & 0.2080 & 0.93 \\
  & 4324 & 0.0861 & 1.56 & 0.0775 & 1.62  & - & -  & 0.5306 & 0.71 & 0.0660 & 1.66 \\
  & 29910 & 0.0262 & 1.72 & 0.0207 & 1.91  & - & -  & 0.2801 & 0.92 & 0.0175 & 1.92 \\
\hline
M$^1$   & 432 & 0.4655 & -  & 0.4812 & -  & 0.4181 & -  & 1.0543 & -  & 0.3452 & -  \\
  & 2991 & 0.2385 & 0.96 & 0.3282 & 0.55 & 0.2208 & 0.92 & 0.8697 & 0.28 & 0.1916 & 0.85 \\
  & 22296 & 0.0797 & 1.58 & 0.2672 & 0.30 & 0.0705 & 1.65 & 0.5281 & 0.72 & 0.0607 & 1.66 \\
  & 172254 & 0.0246 & 1.70 & 0.2636 & 0.02 & 0.0187 & 1.92 & 0.2779 & 0.93 & 0.0160 & 1.92 \\
\hline
M-T$^1$   & 928 & 0.2240 & -  & 0.3197 & -  & 0.1910 & -  & 0.7787 & -  & 0.1684 & -  \\
  & 6543 & 0.1196 & 0.91 & 0.2800 & 0.19 & 0.1052 & 0.86 & 0.6093 & 0.35 & 0.0974 & 0.79 \\
  & 49037 & 0.0470 & 1.35 & 0.2653 & 0.08 & 0.0366 & 1.52 & 0.3643 & 0.74 & 0.0328 & 1.57 \\
  & 379481 & 0.0179 & 1.40 & 0.2643 & 0.01 & 0.0105 & 1.80 & 0.1958 & 0.90 & 0.0093 & 1.82
\end{tabular}}
\caption{Results of relative error of cylindrical bending of a plate with $\nu=0.25$.}
\label{tab:res_cyl_bend_plate_compr}
\end{table}

\begin{table}
  \centering
  \resizebox{0.9\textwidth}{!}{
\begin{tabular}{c|c|cc|cc|cc|cc|cc}
& ncdof & $u_{\mathrm{err}}$ & eoc & $\omega_{\mathrm{err}}$ & eoc & $\tilde{\omega}_{\mathrm{err}}$ & eoc & $\sigma_{\mathrm{err}}$ & eoc & $\couplestress_{\mathrm{err}}$ & eoc \\
 \hline
$\Lag^1$  & 139 & 0.6310 & -  & 0.5821 & -   & - & -  & 12.256 & -  & 0.5288 & -  \\
 & 705 & 0.4979 & 0.34 & 0.4563 & 0.35  & - & -  & 14.577 & -0.25 & 0.3943 & 0.42 \\
 & 4324 & 0.2753 & 0.85 & 0.2547 & 0.84  & - & -  & 17.881 & -0.29 & 0.2198 & 0.84 \\
 & 29910 & 0.1080 & 1.35 & 0.0970 & 1.39  & - & -  & 13.361 & 0.42 & 0.0827 & 1.41 \\
\hline
M$^{1}$  & 432 & 0.6228 & -  & 0.6084 & -  & 0.5660 & -  & 12.159 & -  & 0.4877 & -  \\
 & 2991 & 0.4938 & 0.34 & 0.4981 & 0.29 & 0.4511 & 0.33 & 14.586 & -0.26 & 0.3858 & 0.34 \\
 & 22296 & 0.2729 & 0.86 & 0.3414 & 0.54 & 0.2522 & 0.84 & 17.876 & -0.29 & 0.2186 & 0.82 \\
 & 172254 & 0.1071 & 1.35 & 0.2691 & 0.34 & 0.0961 & 1.39 & 13.369 & 0.42 & 0.0826 & 1.40 \\
\hline
M-T$^{1}$  & 928 & 0.2416 & -  & 0.3280 & -  & 0.2062 & -  & 0.5915 & -  & 0.1821 & -  \\
 & 6543 & 0.1304 & 0.89 & 0.2827 & 0.21 & 0.1142 & 0.85 & 0.4517 & 0.39 & 0.1042 & 0.81 \\
 & 49037 & 0.0509 & 1.36 & 0.2654 & 0.09 & 0.0395 & 1.53 & 0.2682 & 0.75 & 0.0351 & 1.57 \\
 & 379481 & 0.0192 & 1.41 & 0.2643 & 0.01 & 0.0111 & 1.83 & 0.1428 & 0.91 & 0.0098 & 1.84
\end{tabular}}
\caption{Results of relative error of cylindrical bending of a plate with $\nu=0.4999$.}
\label{tab:res_cyl_bend_plate_incompr}
\end{table}

\subsection{High-order bending}
\label{subsec:ho_bending}
We consider a high-order bending problem of a box with dimensions $L=H=t=10$ \cite{XHZH2019}, see Figure~\ref{fig:ho_bending}. The material parameters are as in Section~\ref{subsec:cylindrical_bending_plate}. The non-zero components of the exact solution are prescribed by

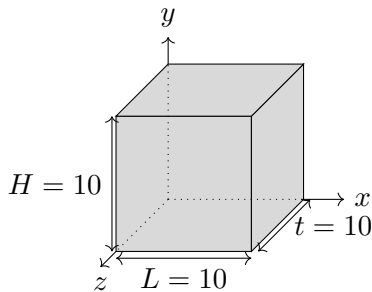
\begin{figure}[ht!]
	\centering
		\begin{tikzpicture}[scale=0.14]
			\def\L{10} 
			\def\H{10} 
			\def\t{10}

			\draw[fill=gray!30] (0,0,0) -- ++(\L,0,0) -- ++(0,\H,0) -- ++(-\L,0,0) -- cycle;
			\draw[fill=gray!30] (\L,0,0) -- ++(0,0,-\t) -- ++(0,\H,0) -- ++(0,0,\t) -- cycle;
			\draw[fill=gray!30] (0,\H,0) -- ++(\L,0,0) -- ++(0,0,-\t) -- ++(-\L,0,0) -- cycle;

			\draw[<->] (\L,-0.5,0) -- (0,-0.5,0) node[midway, below] {$L=10$};
			\draw[<->] (\L*1.05,0,0) -- (\L*1.05,0,-\t) node[midway, right] {$t=10$};
			\draw[<->] (-0.3,0,0) -- (-0.3,\H,0) node[midway, left] {$H=10$};

			\draw[-, dotted] (0,0,-\t) -- ++(\L,0,0);
			\draw[->] (\L,0,-\t)-- ++(3,0,0) node[right] {$x$};
			\draw[-, dotted] (0,0,-\t) -- ++(0,\H,0);
			\draw[->] (0,\H,-\t)-- ++(0,2,0) node[above] {$y$};
			\draw[-, dotted] (0,0,-\t) -- ++(0,0,\t);
			\draw[->] (0,0,0)-- ++(0,0,3)  node[below] {$z$};
	
		\end{tikzpicture}
  \vspace*{-0.5cm}
	\caption{Geometry of high-order bending of box example.}
	\label{fig:ho_bending}
\end{figure}

\begin{flalign*}
	&u_x \!=\! \frac{x\left(y-\frac{H}{2}\right)}{L},\, u_y \!=\! \tilde{u}_y-\frac{\int_{\Omega}\tilde{u}_y\,dx}{|\Omega|},\,  \tilde{u}_y \!=\! -\frac{1.5}{L}x^2-\frac{\lambda\,(2y-H)^2}{8L(\lambda+2\mu)}+\frac{2x^3}{3L^2},\,\omega_z \!=\! -\frac{x^2}{L^2},
\end{flalign*}
where $u_y$ has zero mean. The resulting stresses and body forces can be computed by inserting the exact solution into the strong form of Cosserat elasticity \eqref{eq:strong_cosserat}. For the boundary conditions, we fix $u_x$ at $x=0$ and $u_z$ at $z=0$ and $z=t$. Further, $\omega_x = 0$ at $z=0$ and $z=t$, $\omega_y=0$ at $x=0$, $z=0$, and $z=t$, as well as $\omega_z=0$ at $x=0$. A Lagrange multiplier enforces the zero mean of $u_y$. In Table~\ref{tab:res_ho_bend_p1} and Table~\ref{tab:res_ho_bend_p2}, we display the convergence rates for linear and quadratic elements, respectively. We note that cubic ansatz functions would resemble the exact solution. Again, due to the simple form of the exact solution, parts of the fields converge super-optimally. However, all three methods show the same convergence behavior.

\begin{table}
  \centering
  \resizebox{0.9\textwidth}{!}{
\begin{tabular}{c|c|cc|cc|cc|cc|cc}
& ncdof & $u_{\mathrm{err}}$ & eoc & $\omega_{\mathrm{err}}$ & eoc & $\tilde{\omega}_{\mathrm{err}}$ & eoc & $\sigma_{\mathrm{err}}$ & eoc & $\couplestress_{\mathrm{err}}$ & eoc \\
 \hline
$\Lag^1$  & 23 & 0.4974 & -  & 0.7595 & -   & - & -  & 0.7053 & -  & 0.8540 & -  \\
 & 136 & 0.2567 & 0.95 & 0.3625 & 1.07  & - & -  & 0.4649 & 0.60 & 0.3519 & 1.28 \\
 & 920 & 0.0910 & 1.50 & 0.1226 & 1.56  & - & -  & 0.2574 & 0.85 & 0.1395 & 1.34 \\
 & 6736 & 0.0312 & 1.55 & 0.0354 & 1.79  & - & -  & 0.1328 & 0.95 & 0.0596 & 1.23 \\
\hline
M$^{1}$  & 94 & 0.5185 & -  & 0.9089 & -  & 0.6098 & -  & 0.7068 & -  & 0.6001 & -  \\
 & 679 & 0.2493 & 1.06 & 0.5261 & 0.79 & 0.3003 & 1.02 & 0.4657 & 0.60 & 0.3407 & 0.82 \\
 & 5197 & 0.0871 & 1.52 & 0.3934 & 0.42 & 0.1071 & 1.49 & 0.2578 & 0.85 & 0.1576 & 1.11 \\
 & 40729 & 0.0303 & 1.52 & 0.3774 & 0.06 & 0.0344 & 1.64 & 0.1332 & 0.95 & 0.0720 & 1.13 \\
\hline
M-T$^{1}$  & 203 & 0.3082 & -  & 0.6615 & -  & 0.3262 & -  & 0.4655 & -  & 0.4890 & -  \\
 & 1485 & 0.1628 & 0.92 & 0.4540 & 0.54 & 0.1773 & 0.88 & 0.3292 & 0.50 & 0.2838 & 0.79 \\
 & 11417 & 0.0634 & 1.36 & 0.3860 & 0.23 & 0.0699 & 1.34 & 0.1854 & 0.83 & 0.1416 & 1.00 \\
 & 89649 & 0.0253 & 1.33 & 0.3776 & 0.03 & 0.0265 & 1.40 & 0.0970 & 0.93 & 0.0692 & 1.03
\end{tabular}}
\caption{Results of relative error for high-order bending of box for $k=1$.}
\label{tab:res_ho_bend_p1}
\end{table}

\begin{table}
  \centering
  \resizebox{0.9\textwidth}{!}{
\begin{tabular}{c|c|cc|cc|cc|cc|cc}
& ncdof & $u_{\mathrm{err}}$ & eoc & $\omega_{\mathrm{err}}$ & eoc & $\tilde{\omega}_{\mathrm{err}}$ & eoc & $\sigma_{\mathrm{err}}$ & eoc & $\couplestress_{\mathrm{err}}$ & eoc \\
 \hline
$\Lag^2$  & 136 & 0.0606 & -  & 0.0448 & -   & - & -  & 0.1800 & -  & 0.0808 & -  \\
 & 920 & 0.0096 & 2.66 & 0.0047 & 3.25  & - & -  & 0.0442 & 2.03 & 0.0143 & 2.50 \\
 & 6736 & 0.0021 & 2.16 & 0.0006 & 3.05  & - & -  & 0.0114 & 1.96 & 0.0021 & 2.76 \\
 & 51488 & 0.0005 & 2.04 & 0.0001 & 3.02  & - & -  & 0.0029 & 1.98 & 0.0003 & 2.92 \\
\hline
M$^{2}$  & 313 & 0.0582 & -  & 0.1493 & -  & 0.0320 & -  & 0.1862 & -  & 0.0967 & -  \\
 & 2293 & 0.0096 & 2.61 & 0.0594 & 1.33 & 0.0039 & 3.04 & 0.0458 & 2.02 & 0.0166 & 2.54 \\
 & 17593 & 0.0022 & 2.15 & 0.0286 & 1.05 & 0.0005 & 2.92 & 0.0118 & 1.96 & 0.0024 & 2.77 \\
 & 137905 & 0.0005 & 2.04 & 0.0143 & 1.00 & 0.0001 & 2.97 & 0.0030 & 1.98 & 0.0003 & 2.92 \\
\hline
M-T$^{2}$  & 547 & 0.0427 & -  & 0.1422 & -  & 0.0192 & -  & 0.1285 & -  & 0.0770 & -  \\
 & 4063 & 0.0084 & 2.35 & 0.0589 & 1.27 & 0.0031 & 2.64 & 0.0333 & 1.95 & 0.0144 & 2.42 \\
 & 31381 & 0.0020 & 2.09 & 0.0286 & 1.04 & 0.0004 & 2.91 & 0.0087 & 1.94 & 0.0021 & 2.77 \\
 & 246793 & 0.0005 & 2.03 & 0.0143 & 1.00 & 0.0001 & 2.95 & 0.0022 & 1.97 & 0.0003 & 2.92
\end{tabular}}
\caption{Results of relative error for high-order bending of box for $k=2$.}
\label{tab:res_ho_bend_p2}
\end{table}

\subsection{Torsion of a cylinder}
\label{subsec:torsion_cylinder}
A cylinder with length $L=1$ and radius $R=0.2$ is exposed to a torsion force $T$ at the top and bottom anti-symmetrically; see Figure~\ref{fig:torsion_cylinder}. We consider the material parameters\footnote{In \cite{GJ1975} the parameters $\tilde{\mu}$ and $\kappa$ are used, which are related by $\mu = \tilde{\mu}+\nicefrac{\kappa}{2}$, $\mu_c = 2\kappa$.} $\mu=15$, $\lambda=1$, $\mu_c=5$, and $\alpha=\beta=\gamma=0.5$. The exact solution derived in \cite{GJ1975} reads in cylindrical coordinates 
\begin{flalign*}
	&u_r= u_z = 0,\quad u_\theta = C_1\,rz,\quad \omega_r = -\frac{C_1\,r}{2}+C_2 I_1(p\,r), \quad\omega_z = C_1z,\quad\omega_\theta = 0,\\
	& e_r = (x^2+y^2)^{-\half}\begin{pmatrix}
		x&y&0
	\end{pmatrix}^T,\,\, e_\theta = (x^2+y^2)^{-\half}\begin{pmatrix}
		-y&x&0\end{pmatrix}^T,\,\, e_z = \begin{pmatrix}
		0&0&1
	\end{pmatrix}^T,
\end{flalign*}
where $I_n(\cdot)$ is the modified Bessel function of first kind of order $n$ and
\begin{flalign*}
	&C_1 = 2C_2\left(\frac{\alpha+\beta+\gamma}{\beta+\gamma}p\,I_0(p\,R)-\frac{I_1(p\,R)}{R}\right),\quad\quad p = \sqrt{\frac{4\mu_c}{\alpha+\beta+\gamma}},\\
	&C_2 \!=\! \frac{T}{2\pi R^2}\left(\left(\frac{2\mu}{\beta\!+\!\gamma}\frac{R^2}{4}\!+\!\frac{3}{2}\right)(\alpha\!+\!\beta\!+\!\gamma)p\,I_0(pR)\!-\!\left(\frac{2\mu}{\beta\!+\!\gamma}\frac{R^2}{4}\!+\!2\right)(\beta\!+\!\gamma)\frac{I_1(paR)}{R}\right)^{-1}\!.
\end{flalign*}

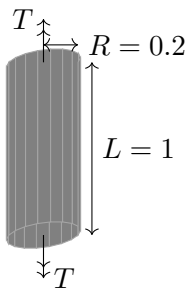
\begin{figure}[ht!]
	\centering
		\begin{tikzpicture}[scale=2.3]
			\def\R{0.2} 
			\def\L{1} 
			\def\nTheta{180} 
			\def\nZ{10} 
			\def\opacity{0.5} 
			\foreach \theta in {0,20,...,340} {
          \draw[gray!70,fill=gray, opacity=\opacity] ({\R*cos(\theta)}, 0, {\R*sin(\theta)}) -- ({ \R*cos(\theta+20)}, 0, {\R*sin(\theta+20)}) -- ({\R*cos(\theta+20)}, \L, {\R*sin(\theta+20)}) -- ({\R*cos(\theta)}, \L, {\R*sin(\theta)}) -- cycle;
          }

        \draw[<->] (1.4*\R,0,0) -- (1.4*\R,\L,0) node[midway, right] {$L=1$};
        \draw[<->] (0,1.1*\L,0) -- (\R,1.1*\L,0) node[right] {$R=0.2$};
        \draw[->] (0, \L,0) --(0,1.19*\L,0);
        \draw[->] (0, \L,0) --(0,1.25*\L,0) node[left] {$T$};
        \draw[->] (0, 0,0) --(0,-0.19*\L,0);
        \draw[->] (0, 0,0) --(0,-0.25*\L,0) node[right] {$T$};
			\end{tikzpicture}
   \vspace*{-0.5cm}
	\caption{Geometry of torsion of cylinder example.}
	\label{fig:torsion_cylinder}
\end{figure}
By symmetry in the $z$-direction, we consider half of the domain. All other boundaries are free except the top, where the torsion is applied. 
We enforce zero mean-value for the $x$ and $y$ displacement and rotation quantities by Lagrange multipliers (the exact solution also has zero mean). 
Note, that the resulting couple stress $\couplestress$ is symmetric independently of the choice of $\beta$ and $\gamma$ for this benchmark as only $\couplestress_{rr}$, $\couplestress_{\theta\theta}$, and $\couplestress_{zz}$ are non-zero \cite{GJ1975}. We use linear, quadratic, and cubic elements. To reduce the geometry approximation error due to the curved cylindrical domain, we curve the boundary with polynomial degree $k-1$. An isoperimetric approach would be to curve the boundary equally with the displacement and rotation order $k$ for the Lagrangian method $\Lag^k$. However, we use $\RT^{k-1}$ for the rotation field in the MCS methods, which does not contain all polynomials of degree $k$. Thus, curving would reduce the method's accuracy instead of increasing it because the polynomials are not preserved during the mapping. 
We observe  
comparable convergence behavior for all three methods; see Figure~\ref{fig:torsion_res}. The pure MCS method is superior by a constant compared to the MCS-TDNNS method. However, the same convergence rates are obtained. Due to the symmetries of the solution, some super-convergence is observed.

\begin{figure}[ht!]
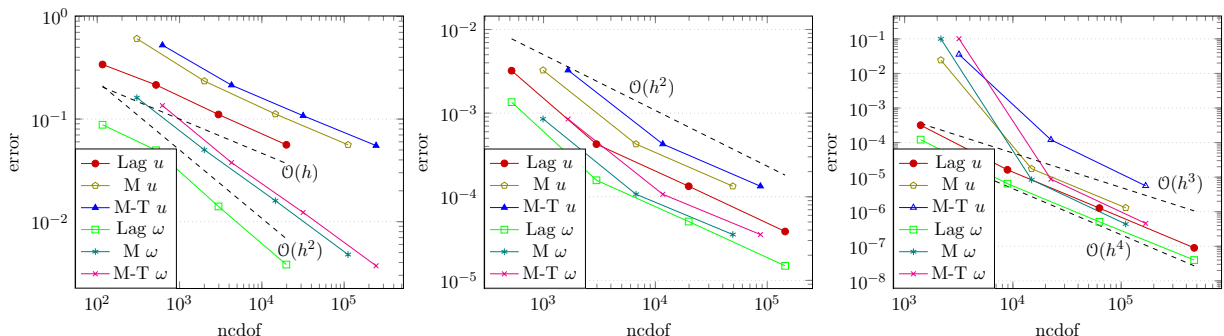

	\centering
	\resizebox{0.32\textwidth}{!}{\begin{tikzpicture}
		\begin{loglogaxis}[
			legend style={at={(0,0)}, anchor=south west},
			xlabel={ncdof},
			ylabel={error},
			ymajorgrids=true,
			grid style=dotted,
			]
        \input{torsion_p1_err_u_w.txt}
		\end{loglogaxis}
		
		\node (B) at (4.7, 2.4) [] {$\mathcal{O}(h)$};
		\node (B) at (4.7, 0.8) [] {$\mathcal{O}(h^2)$};
\end{tikzpicture}}
	\resizebox{0.32\textwidth}{!}{\begin{tikzpicture}
		\begin{loglogaxis}[
			legend style={at={(0,0)}, anchor=south west},
			xlabel={ncdof},
			ylabel={error},
			ymajorgrids=true,
			grid style=dotted,
			]

        \input{torsion_p2_err_u_w.txt}
		\end{loglogaxis}
		
		\node (B) at (3.5, 4.2) [] {$\mathcal{O}(h^2)$};
\end{tikzpicture}}
\resizebox{0.32\textwidth}{!}{\begin{tikzpicture}
		\begin{loglogaxis}[
			legend style={at={(0,0)}, anchor=south west},
			xlabel={ncdof},
			ylabel={error},
			ymajorgrids=true,
			grid style=dotted,
			]

        \input{torsion_p3_err_u_w.txt}
		\end{loglogaxis}
		
		\node (B) at (6, 2.2) [] {$\mathcal{O}(h^3)$};
        \node (B) at (4.5, 0.8) [] {$\mathcal{O}(h^4)$};
\end{tikzpicture}}
\vspace*{-0.45cm}
\caption{Convergence rates for cylinder torsion with $k=1$ (left), $k=2$ (middle), and $k=3$ (right). For M$^k$ and M-T$^k$ $\tilde{\omega}_h$ is used instead of $\omega_h$.}
\label{fig:torsion_res}
\end{figure}

\subsection{Convergence and robustness in the Cosserat coupling constant}
\label{subsec:robustness_mu_c}
So far, the Cosserat coupling constant $\mu_c$ was well-behaved in the previous numerical examples. To test robustness in the Cosserat coupling constant $\mu_c\to\infty$, we adapt the example from Section~\ref{subsec:ho_bending}. We consider the domain $\Omega=(0,1)^3$ and the material parameters $E=2500$, $\nu = 0.25$ (and corresponding Lam\'e parameters $\mu$ and $\lambda$), $L_c=1$, $\alpha = \beta = 2\mu L_c^2$, and $\gamma = 4\mu L_c^2$. We take a sequence of Cosserat coupling moduli $\mu_c\in \{\mu,\mu\times 10^3,\mu\times 10^6\}$.  
The exact solution is given by
\begin{flalign*}
&u_x \!=\! \sin(x)(y-\half),\,\, u_y \!=\! -\half \sin(x)^2-\sin(x)^2\frac{\lambda}{2(\lambda+2\mu)}(y-\half)^2\cos(z)+\frac{\sin(x)^3}{3},\\
&u_z \!=\! \sin(x)^2\cos(1\!-\!y)(z\!-\!\half),\,\, \omega \!=\! \half\curl u \!+\! \frac{1}{\mu_c}\grad\Phi,\,\, \Phi \!=\! 10^3 x^2(1\!-\!x)y(1\!-\!y)(1\!-\!z)^2
\end{flalign*}
and the corresponding right-hand sides can be computed by inserting the exact solution into the strong form \eqref{eq:strong_cosserat}.
At the boundary $x=0$, we have homogeneous Dirichlet data for $u$ and $\omega$ (clamped boundary), the other boundaries are of Neumann type. 
\begin{table}
  \centering
  \resizebox{0.9\textwidth}{!}{
\begin{tabular}{c|c|cc|cc|cc|cc|cc}
& ncdof & $u_{\mathrm{err}}$ & eoc & $\omega_{\mathrm{err}}$ & eoc & $\tilde{\omega}_{\mathrm{err}}$ & eoc & $\sigma_{\mathrm{err}}$ & eoc & $\couplestress_{\mathrm{err}}$ & eoc \\ 
 \hline
$\Lag^1$  & 30 & 0.6248 & -  & 0.6333 & -   & - & -  & 0.5892 & -  & 0.7437 & -  \\
 & 156 & 0.3494 & 0.84 & 0.3337 & 0.92  & - & -  & 0.3642 & 0.69 & 0.4115 & 0.85 \\
 & 984 & 0.1846 & 0.92 & 0.1823 & 0.87  & - & -  & 0.1921 & 0.92 & 0.2269 & 0.86 \\
 & 6960 & 0.0936 & 0.98 & 0.0943 & 0.95  & - & -  & 0.0975 & 0.98 & 0.1169 & 0.96 \\
 & 52320 & 0.0470 & 0.99 & 0.0478 & 0.98  & - & -  & 0.0489 & 1.00 & 0.0590 & 0.99 \\
\hline
M$^1$  & 99 & 0.5843 & -  & 1.2808 & -  & 0.8426 & -  & 0.6296 & -  & 0.7276 & -  \\
 & 702 & 0.3444 & 0.76 & 1.2522 & 0.03 & 0.4509 & 0.90 & 0.3667 & 0.78 & 0.3913 & 0.89 \\
 & 5292 & 0.1840 & 0.90 & 1.2940 & -0.05 & 0.2442 & 0.88 & 0.1929 & 0.93 & 0.2107 & 0.89 \\
 & 41112 & 0.0936 & 0.98 & 1.3186 & -0.03 & 0.1221 & 1.00 & 0.0978 & 0.98 & 0.1063 & 0.99 \\
 & 324144 & 0.0470 & 0.99 & 1.3326 & -0.02 & 0.0608 & 1.00 & 0.0491 & 1.00 & 0.0533 & 1.00 \\
\hline
M-T$^1$  & 210 & 0.5654 & -  & 1.3104 & -  & 0.8669 & -  & 0.3566 & -  & 0.7402 & -  \\
 & 1524 & 0.3125 & 0.86 & 1.2584 & 0.06 & 0.4533 & 0.94 & 0.1925 & 0.89 & 0.3927 & 0.91 \\
 & 11592 & 0.1696 & 0.88 & 1.2957 & -0.04 & 0.2445 & 0.89 & 0.1065 & 0.85 & 0.2108 & 0.90 \\
 & 90384 & 0.0873 & 0.96 & 1.3191 & -0.03 & 0.1221 & 1.00 & 0.0554 & 0.94 & 0.1063 & 0.99 \\
 & 713760 & 0.0442 & 0.98 & 1.3327 & -0.01 & 0.0608 & 1.01 & 0.0282 & 0.98 & 0.0533 & 1.00
\end{tabular}}
\caption{Results robustness for Cosserat coupling constant $\mu_c=\mu$.}
\label{tab:res_robust_mu1e3}
\end{table}

\begin{table}
  \centering
  \resizebox{0.9\textwidth}{!}{
\begin{tabular}{c|c|cc|cc|cc|cc|cc}
& ncdof & $u_{\mathrm{err}}$ & eoc & $\omega_{\mathrm{err}}$ & eoc & $\tilde{\omega}_{\mathrm{err}}$ & eoc & $\sigma_{\mathrm{err}}$ & eoc & $\couplestress_{\mathrm{err}}$ & eoc \\ 
 \hline
$\Lag^1$  & 30 & 0.9805 & -  & 0.9650 & -   & - & -  & 0.9333 & -  & 0.9643 & -  \\
 & 156 & 0.7422 & 0.40 & 0.8471 & 0.19  & - & -  & 0.5656 & 0.72 & 0.8422 & 0.20 \\
 & 984 & 0.4997 & 0.57 & 0.5768 & 0.55  & - & -  & 0.4081 & 0.47 & 0.5625 & 0.58 \\
 & 6960 & 0.2331 & 1.10 & 0.2565 & 1.17  & - & -  & 0.2235 & 0.87 & 0.2452 & 1.20 \\
 & 52320 & 0.0910 & 1.36 & 0.0835 & 1.62  & - & -  & 0.1120 & 1.00 & 0.0818 & 1.58 \\
\hline
M$^1$  & 99 & 0.7587 & -  & 1.1637 & -  & 0.5918 & -  & 0.8166 & -  & 0.5447 & -  \\
 & 702 & 0.4261 & 0.83 & 1.1997 & -0.04 & 0.3082 & 0.94 & 0.5419 & 0.59 & 0.2934 & 0.89 \\
 & 5292 & 0.2407 & 0.82 & 1.2878 & -0.10 & 0.1539 & 1.00 & 0.3467 & 0.64 & 0.1480 & 0.99 \\
 & 41112 & 0.1260 & 0.93 & 1.3208 & -0.04 & 0.0770 & 1.00 & 0.1897 & 0.87 & 0.0737 & 1.01 \\
 & 324144 & 0.0642 & 0.97 & 1.3352 & -0.02 & 0.0406 & 0.92 & 0.0978 & 0.96 & 0.0381 & 0.95 \\
\hline
M-T$^1$  & 210 & 0.6232 & -  & 1.1915 & -  & 0.6402 & -  & 0.4062 & -  & 0.5663 & -  \\
 & 1524 & 0.3308 & 0.91 & 1.2223 & -0.04 & 0.3275 & 0.97 & 0.2174 & 0.90 & 0.2864 & 0.98 \\
 & 11592 & 0.1790 & 0.89 & 1.2929 & -0.08 & 0.1664 & 0.98 & 0.1195 & 0.86 & 0.1483 & 0.95 \\
 & 90384 & 0.0922 & 0.96 & 1.3213 & -0.03 & 0.0844 & 0.98 & 0.0622 & 0.94 & 0.0763 & 0.96 \\
 & 713760 & 0.0467 & 0.98 & 1.3350 & -0.01 & 0.0426 & 0.99 & 0.0317 & 0.97 & 0.0389 & 0.97
\end{tabular}}
\caption{Results robustness for Cosserat coupling constant $\mu_c=\mu \times 10^3$.}
\label{tab:res_robust_mu1e6}
\end{table}

\begin{table}
  \centering
  \resizebox{0.9\textwidth}{!}{
\begin{tabular}{c|c|cc|cc|cc|cc|cc}
& ncdof & $u_{\mathrm{err}}$ & eoc & $\omega_{\mathrm{err}}$ & eoc & $\tilde{\omega}_{\mathrm{err}}$ & eoc & $\sigma_{\mathrm{err}}$ & eoc & $\couplestress_{\mathrm{err}}$ & eoc \\ 
 \hline
$\Lag^1$  & 30 & 1.0071 & -  & 1.0000 & -   & - & -  & 0.9554 & -  & 1.0000 & -  \\
 & 156 & 0.8425 & 0.26 & 0.9998 & 0.00  & - & -  & 0.5816 & 0.72 & 0.9998 & 0.00 \\
 & 984 & 0.8061 & 0.06 & 0.9992 & 0.00  & - & -  & 0.4954 & 0.23 & 0.9991 & 0.00 \\
 & 6960 & 0.7854 & 0.04 & 0.9968 & 0.00  & - & -  & 0.4070 & 0.28 & 0.9965 & 0.00 \\
 & 52320 & 0.7725 & 0.02 & 0.9873 & 0.01  & - & -  & 0.3673 & 0.15 & 0.9862 & 0.01 \\
\hline
M$^1$  & 99 & 0.7612 & -  & 1.1649 & -  & 0.5931 & -  & 0.8193 & -  & 0.5458 & -  \\
 & 702 & 0.4285 & 0.83 & 1.2006 & -0.04 & 0.3104 & 0.93 & 0.5456 & 0.59 & 0.2968 & 0.88 \\
 & 5292 & 0.2432 & 0.82 & 1.2878 & -0.10 & 0.1581 & 0.97 & 0.3507 & 0.64 & 0.1531 & 0.95 \\
 & 41112 & 0.1289 & 0.92 & 1.3206 & -0.04 & 0.0817 & 0.95 & 0.1932 & 0.86 & 0.0796 & 0.94 \\
 & 324144 & 0.0692 & 0.90 & 1.3354 & -0.02 & 0.0450 & 0.86 & 0.1022 & 0.92 & 0.0434 & 0.87 \\
\hline
M-T$^1$  & 210 & 0.6224 & -  & 1.1899 & -  & 0.6388 & -  & 0.4058 & -  & 0.5663 & -  \\
 & 1524 & 0.3307 & 0.91 & 1.2208 & -0.04 & 0.3253 & 0.97 & 0.2174 & 0.90 & 0.2851 & 0.99 \\
 & 11592 & 0.1789 & 0.89 & 1.2919 & -0.08 & 0.1626 & 1.00 & 0.1196 & 0.86 & 0.1455 & 0.97 \\
 & 90384 & 0.0922 & 0.96 & 1.3209 & -0.03 & 0.0806 & 1.01 & 0.0622 & 0.94 & 0.0728 & 1.00 \\
 & 713760 & 0.0467 & 0.98 & 1.3349 & -0.02 & 0.0400 & 1.01 & 0.0317 & 0.97 & 0.0363 & 1.00
\end{tabular}}
\caption{Results robustness for Cosserat coupling constant $\mu_c=\mu \times 10^6$.}
\label{tab:res_robust_mu1e9}
\end{table}

In Table~\ref{tab:res_robust_mu1e3}, Table~\ref{tab:res_robust_mu1e6}, and Table~\ref{tab:res_robust_mu1e9} the results for the linear methods are displayed. For the case $\mu_c=\mu$ in Table~\ref{tab:res_robust_mu1e3}, all three methods perform equally well. No locking is observed. Increasing the Cosserat coupling constant to $\mu_c=\mu\times 10^3$, however, yields an increased so-called pre-asymptotic regime for the pure Lagrangian method $\Lag^1$. This pre-asymptotic regime can also be clearly observed in Figure~\ref{fig:res_robustness} (left), where the mesh resolution has to be sufficiently fine before optimal convergence starts. 
As expected by the convergence Theorem~\ref{thm:convergence} and Corollary~\ref{cor:stability_convergence_mcs}, where the constant $C$ has been shown to be independent of the Cosserat coupling constant $\mu_c$, 
the two mixed methods do not show this locking phenomenon, cf. Table~\ref{tab:res_robust_mu1e6}. 
Even by increasing the constant to $\mu_c=\mu\times 10^6$, which already almost resembles the coupled stress limit problem, the mixed methods converge optimally. In contrast, the Lagrangian-based method $\Lag^1$ nearly stops converging, the pre-asymptotic regime completely dominates; see Table~\ref{tab:res_robust_mu1e9} and Figure~\ref{fig:res_robustness} (left). Increasing the polynomial ansatz space from linear to quadratic elements reduces the enormous locking problem for $\Lag^2$. Nevertheless, one can observe in Figure~\ref{fig:res_robustness} (right) that the convergence is not independent of $\mu_c$; the curves shift strongly. The convergence for the quadratic mixed methods shows the same behavior as their linear counterparts.

\begin{figure}[ht!]
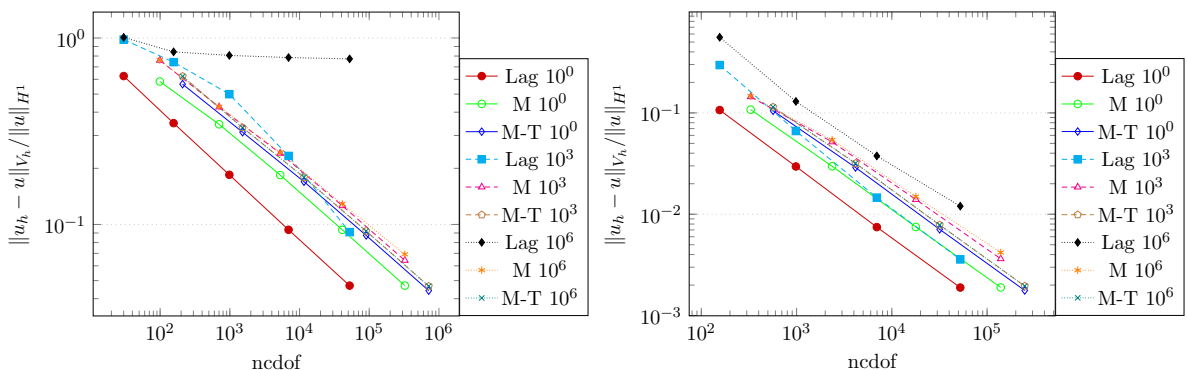

	\centering
	\resizebox{0.47\textwidth}{!}{\begin{tikzpicture}
		\begin{loglogaxis}[
			legend style={at={(1,0)}, anchor=south west},
			xlabel={ncdof},
			ylabel={$\|u_h-u\|_{V_h}/\|u\|_{H^1}$},
			ymajorgrids=true,
			grid style=dotted,
			]
        \input{robustness_p1_err_u_h1.txt}
		
		\end{loglogaxis}
\end{tikzpicture}}
\resizebox{0.47\textwidth}{!}{\begin{tikzpicture}
		\begin{loglogaxis}[
			legend style={at={(1,0)}, anchor=south west},
			xlabel={ncdof},
			ylabel={$\|u_h-u\|_{V_h}/\|u\|_{H^1}$},
			ymajorgrids=true,
			grid style=dotted,
			]
        \input{robustness_p2_err_u_h1.txt}
		
		\end{loglogaxis}
\end{tikzpicture}}
\vspace*{-0.45cm}
\caption{Results robustness test for $\mu_c/\mu\in \{1,10^3,10^6\}$ with methods of order $k=1,2$.}
\label{fig:res_robustness}
\end{figure}

\section*{Acknowledgments}
The work of Kaibo Hu was supported by a Royal Society University Research Fellowship (URF$\backslash$R1$\backslash$221398). Michael Neunteufel acknowledges support by the National Science Foundation (USA) Grant DMS-219958.

\bibliographystyle{siam}
\bibliography{references}
\end{document}